\numberwithin{equation}{section}
\newtheorem{theorem}{Theorem}[section]
\newtheorem{lemma}[theorem]{Lemma}
\newtheorem{proposition}[theorem]{Proposition}
\newtheorem{corollary}[theorem]{Corollary}
\newtheorem{claim}[theorem]{Claim}
\newtheorem{remark}[theorem]{Remark}
\newtheorem{convention}[theorem]{Convention}
\title[Inverse scattering of 3D Alfv\'en waves]{Ideal MHD. Part III:\\ Inverse scattering of Alfv\'en waves in three dimensional\\ ideal magnetohydrodynamics}
\author[Mengni Li]{Mengni Li}
\address{School of Mathematics, Southeast University, No.2 SEU Road, Nanjing 211189, P.R. China}
\email{krisymengni@163.com,  lmn@seu.edu.cn}
\begin{document}
\begin{abstract}

The purpose of this paper is to solve the inverse scattering problem of nonlinear Alfv\'en waves governed by the three dimensional ideal incompressible MHD system. Bridging together geometric methods and weighted energy estimates, we establish a couple of scattering isomorphisms to substantially strengthen our previous rigidity results. 
This answer is consistent with the physical intuition that Alfv\'en waves behave exactly in the same manner as their scattering fields detected by the faraway observers.  
The novelty of the present work is twofold:  for one thing, the relationship between Alfv\'en waves emanating from the plasma and their scattering fields at infinities is explored to the best; for another thing, 
the null structure inherent in MHD equations is thoroughly examined, especially when we estimate the pressure term.

\medskip

\noindent
\textbf{Running title}:  Inverse scattering of 3D Alfv\'en waves\\
\textbf{Keywords}: Alfv\'en wave, magnetohydrodynamics, scattering field, scattering isomorphism, energy method\\
\textbf{2020 Mathematics Subject Classification}: Primary 35R30, 76W05; Secondary 35B40, 35P25, 35Q35
\end{abstract}

\maketitle
\tableofcontents

\section{Introduction}

The study of magnetohydrodynamics (MHD) concerns mutual interactions between electromagnetic fields and electrically conducting fluids, and our discussion is restricted to the ideal incompressible case. 
Due to the fact that Alfv\'en waves can propagate and return, we are interested in an inverse scattering topic to recover initial data emanating from the plasma when given scattering fields at infinities, namely the faraway traces of solutions to the MHD system. Recently, we have approached this issue in \cite{Li-Yu} by concentrating on the rigidity aspect that Alfv\'en waves must vanish if their scattering fields vanish at infinities. This current study is intended to further give a satisfactory answer to the above inverse scattering problem: the scattering operator can uniquely determine solutions to the MHD system and accordingly Alfv\'en waves can be reconstructed from knowledge of their scattering fields at infinities.

\subsection{The MHD system}

Let us consider the ideal incompressible MHD system in $\mathbb{R}^{1+3}$: 
\begin{equation}\label{MHD general}
\begin{cases}
	&\partial_t  v+ v\cdot \nabla v = -\nabla p + (\nabla\times b)\times b, \\
	&\partial_t b + v\cdot \nabla b =  b \cdot \nabla v,\\
	&\operatorname{div} v =0,\\
	&\operatorname{div} b =0,
\end{cases}\end{equation}
where $b(t,x):\mathbb{R}\times\mathbb{R}^3\to\mathbb{R}^3$ is the magnetic field, $v(t,x):\mathbb{R}\times\mathbb{R}^3\to\mathbb{R}^3$ is the fluid velocity, and $p(t,x):\mathbb{R}\times\mathbb{R}^3 \to\mathbb{R}$ is the fluid pressure (this is a scalar function). 

\smallskip

We first proceed by decomposing the Lorentz force term $(\nabla\times b)\times b$ into 
\[\underbrace{(\nabla\times b)\times b}_{\text{Lorentz force}}=\underbrace{\nabla(-\tfrac{1}{2}| b|^2)}_{\sim \text{ magnetic pressure}}+\underbrace{\ b \cdot \nabla  b\ }_{\sim \text{ magnetic tension}}\]
and using $p$ again in the place of the total pressure $p+\tfrac{1}{2}| b|^2$. This provides convenience for rephrasing the momentum equation (the first equation in \eqref{MHD general}) as 
\begin{equation*}
   \partial_t  v+ v\cdot \nabla v = -\nabla p+ b \cdot \nabla b.
\end{equation*}
Particular emphasis should be given to the magnetic tension term $b \cdot \nabla b$, which is the only restoring force to produce the Alfv\'en waves discovered by the Swedish Nobel laureate Hannes Alfv\'en \cite{Alfven}. As exhaustively discussed in \cite{Davidson}, the phenomena of the Alfv\'en waves have been successfully employed to investigate a host of topics in astrophysics, such as star formation, sunspots, solar flares, solar winds and so on. We also refer the interested readers to \cite{Davidson,Priest-Forbes} for example to obtain a more complete presentation. 

\smallskip
Throughout the rest of this paper, we shall follow the convention that $B_0=(0,0,1)$ often represents a strong background magnetic field of the system.  We focus on the most interesting situation where a strong background magnetic field $B_0$ presents and thus a small initial perturbation will generate Alfv\'en waves which propagate along $B_0$. To be specific, let us first diagonalize \eqref{MHD general} in terms of the Els\"{a}sser variables $Z_+ = v +b$ and $Z_- = v-b$. 
This leads to 
\begin{equation}\label{MHD in Elsasser}\begin{cases}
	&\partial_t  Z_+ +Z_- \cdot \nabla Z_+ = -\nabla p, \\
	&\partial_t  Z_- +Z_+ \cdot \nabla Z_- = -\nabla p,\\
	&\operatorname{div} Z_+ =0,\\
	&\operatorname{div} Z_- =0.
\end{cases}\end{equation}
Then the fluctuations $z_+=Z_+-B_0$ and $z_-=Z_-+B_0$ propagate along $B_0$ in opposite directions and obey the following system: 
\begin{equation}\label{eq:MHD}\begin{cases}
	&\partial_{t}z_{+}+Z_-\cdot \nabla z_{+} =-\nabla p,\\
	&\partial_{t}z_{-}+Z_+\cdot \nabla z_{-} =-\nabla p,\\
	&\operatorname{div} z_{+}=0,\\
	&\operatorname{div} z_{-}=0.
\end{cases}\end{equation}

Recall that the curl of a vector field $f$ on $\mathbb{R}^3$  is defined as 
$\operatorname{curl} f = \big(\varepsilon_{ijk}\partial_if^j\big) \partial_k$, where $\varepsilon_{ijk}$ is a totally anti-symmetric symbol associated to the volume form of $\mathbb{R}^3$ and repeated indices are understood as summations. Now we define $j_+=\operatorname{curl} z_+$ and $j_-=\operatorname{curl} z_-$. 
Taking the curl of \eqref{eq:MHD} subsequently yields the following system of equations for $(j_{+},j_{-})$:
\begin{equation}\label{eq:curlMHD-2}\begin{cases}
	&\partial_{t}j_{+}+Z_-\cdot \nabla j_{+} =-\nabla z_{-}\wedge\nabla z_{+},\\
	&\partial_{t}j_{-}+Z_+\cdot \nabla j_{-} =-\nabla z_{+}\wedge\nabla z_{-},\\
	&\operatorname{div} j_{+}=0,\\
	&\operatorname{div} j_{-}=0,
\end{cases}\end{equation}
where the nonlinear terms on the right hand side can be explicitly written as 
\begin{equation*}
\begin{cases}
	&\nabla z_- \wedge \nabla z_+ =\big(\varepsilon_{ijk}\partial_i z_-^l\partial_l z_+^j\big)\partial_k, \\ 
	&\nabla z_+ \wedge \nabla z_- =\big(\varepsilon_{ijk}\partial_i z_+^l\partial_l z_-^j\big)\partial_k.
	\end{cases}
\end{equation*}

\medskip

\subsection{Motivation of the problem and Overview}

Much work has been done in the past concerning the small-data-global-existence of incompressible MHD with strong magnetic backgrounds. In particular, the space dimension three scenario, which nature gives preference, is not only the most important but also the most challenging. 
Contributions on this subject can be traced back at least to \cite{Bardos-Sulem-Sulem}, where Bardos, Sulem and Sulem obtained global solutions in the H\"older space for the ideal case by means of the convolution with fundamental solutions. For the case with strong fluid viscosity, global existence results were achieved by Lin, Xu and Zhang through Fourier method in the seminal works such as \cite{Lin-Zhang,Xu-Zhang}  in the sense that the smallness of the data relies on the viscosity.  Soon thereafter, He, Xu and Yu synthesized the idea of wave equations and the method of weighted energy estimates to prove the global nonlinear stability for both the ideal case and the case with small fluid viscosity in \cite{He-Xu-Yu}, where the size of initial data does not depend on viscosity. 
Their proof is inspired by the work \cite{Christodoulou-Klainerman} on the nonlinear stability of Minkowski spacetime while alternative proofs on the similar result can be found in \cite{Cai-Lei,Wei-Zhang-2017,Wei-Zhang-2018}. 
The aforementioned works pave the way for many new developments on global well-posedness and long time behavior of MHD; see  \cite{Abidi-Zhang,Li-Wu-Xu,Li-Yu,Xu} for instance.

\smallskip

This paper is still bound up with the global existence for MHD and on this basis advances the approach to investigating the scattering behavior of global solution. 
Let us mention that the scattering theory (leading to rigidity) for Alfv\'en waves governed by the MHD system is the central issue of our previous paper \cite{Li-Yu}, 
which came as a surprise to most researchers since there were no analogous constructions before. 
Precisely, we  proved that the scattering fields of Alfv\'en waves are well-defined by the traces of the solution at characteristic infinities, and more importantly, a couple of rigidity from infinity theorems can be constructed as follows: 
\begin{align*}
&\textit{the Alfv\'en waves must vanish everywhere}\\
&\textit{if their scattering fields vanish at infinities.}
\end{align*}
This statement is rather striking due to its consistency with the following physical intuition: 
\begin{align*}
&\textit{there are no Alfv\'en waves at all emanating from the plasma}\\
&\textit{if no waves are detected by the faraway observers.}
\end{align*}
Essentially speaking, our rigidity results indeed reflect a sense of uniqueness that we can recover the vanishing initial Alfv\'en waves from the vanishing scattering fields. 
The natural extension to this rigidity phenomenon is whether we can recover the initial Alfv\'en waves from  whatever the scattering fields are.       
However, the proof in \cite{Li-Yu} has not yielded this kind of promising inverse scattering results. 

\smallskip

Luckily, the scattering theory in the case of wave equations is tractable and has been studied extensively in the literature \cite{Friedlander80,Lax-Phillips,Isakov} together with their cited references and citing references. 
A number of instances are formulated therein, among the simplest and the most classical of which are free waves, i.e. smooth solutions to the linear wave equations $\Box\phi=0$ in $\mathbb{R}^{1+n}$. 
The standard approach to understand the corresponding scattering theory is to use the  Radon transform inversion formula. 
It is then proved that free waves can be reconstructed from the explicit expression of scattering fields. 
This remarkable result serves as inspiration for the developments in the nonlinear setting, especially including  \cite{Li-2021} where we manage to construct an inverse scattering theorem for one-dimensional semilinear wave equations verifying the null conditions.  
It essentially says that 
\begin{align*}
&\textit{the one-dimensional semilinear
waves behave exactly in the same manner as }\\
&\textit{their scattering fields detected
by the far-away observers.}
\end{align*}
Compared with the scattering theory in earlier works, our contribution reflects the following two highlights. 
One is that reasoning therein is merely based on the one-dimension geometric constructions and the weighted energy estimates (rather than microlocal analysis techniques and Carleman estimates, when compared with most studies on scattering). 
The other is tightly related to the one-dimension null structure, namely left-traveling waves are coupled only with right-traveling waves: these two kinds of waves propagate in opposite directions with the increasing spatial distance so that the decay of null form in time is obtainable.

\smallskip

We note that the dynamic behavior of nonlinear Alfv\'en waves is similar to that of one-dimensional waves, and the MHD system in strong magnetic backgrounds contains nonlinear terms with null structure. As such, it is of great interest to know if our previous construction proposed for the model  $\Box\phi=Q(\partial\phi,\partial\phi)$ in $\mathbb{R}^{1+1}$ can help us solve the inverse scattering problem of Alfv\'en waves more naturally. 
The ideas above motivate at least attempts to work with a suitable modification of the framework used in \cite{Li-2021}.  
Nevertheless, the estimates required are considerably more involved and present novel technical challenges due to the quasilinear nature of the MHD system. 
Furthermore, it is reasonable to conjecture that
\begin{equation}\label{conjecture}\begin{split}
&\textit{the Alfv\'en waves behave exactly in the same manner as }\\
&\textit{their scattering fields detected
by the far-away observers.}
\end{split}\end{equation}
This ultimate conjecture will be elaborately stated in Theorem \ref{thm:inversetheorem}. 
Though there are two recent works pointing in this direction (i.e. \cite{He-Xu-Yu} for the construction of scattering operator, and \cite{Li-Yu} in the sense of rigidity or uniqueness), completely satisfactory results are still missing as witnessed. 
Thereby, the principal goal of this paper is to further study this issue and give an affirmative answer to thoroughly explore the relationship between the scattering fields and the initial data of Alfv\'en waves.

\bigskip
\noindent
\textbf{Structure of the paper.}
In Section \ref{sec:mainresults}, we introduce basic notations and present the main results. Our results consist of three main building blocks: 
global existence and weighted energy estimates, construction of scattering scattering fields and their weighted Sobolev spaces,  scattering isomorphisms between scattering fields and their initial data. 
Section \ref{sec:preliminary} includes bootstrap assumptions to start the proof and several preliminary lemmas used throughout this paper. 
Here we remark that the pressure estimates, as the main technical ingredient and the most involved part of this paper, are independently dealt with and fully collected in this section. 
The bootstrap argument  is then completed in Section \ref{sec:energy}, which allows us to construct the weighted energy estimates and the global solution. 
The last two sections are devoted to the construction of scattering fields and in particular to the construction of scattering isomorphisms. 
In addition,  Section \ref{sec:inverse} contains a detailed proof for the main inverse scattering theorem with all four cases discussed. Finally,  we end this paper with some remarks on our inverse scattering theorem.

\bigskip

\noindent
\textbf{Acknowledgement.}
The author started on this project in Spring 2019 and would like to thank Professor Pin Yu for many enlightening discussions as well as for unfailing support over the past years.  
This work was supported in part by the Natural Science Foundation of Jiangsu Province (Grant No. BK20220792); in part by the National Natural Science Foundation of China (Grant Nos. 12171267 and 12201107); and in part by the Zhishan Youth Scholar Program of Southeast University.

\bigskip
\section{The core set-up and Main results}\label{sec:mainresults}

\subsection{Notations and Global solutions}
We will follow in this work most of the conventions and notations used by \cite{He-Xu-Yu,Li-Yu}: 
\begin{enumerate}[(i)]
\smallskip
\item 
We have two  characteristic (space-time)
vector fields $L_{+}$ and $L_{-}$ as 
\begin{equation}\label{vector}
L_{+}=\partial_t+Z_+\cdot\nabla,\ \ \ \  L_{-}=\partial_t+Z_-\cdot\nabla.
\end{equation}
Let $\widetilde{\operatorname{div}}$ denote the divergence of $\mathbb{R}^4$ with respect to the standard Euclidean metric. 
As a result of $\operatorname{div}Z_\pm=0$, there holds $\widetilde{\operatorname{div}}L_\pm=0$.

\medskip

\item 
We have two characteristic functions $u_+=u_+(t,x)$ and $u_-=u_-(t,x)$ as 
\begin{equation}\label{definitionforu}
\begin{cases}
&L_+ u_+ = 0,\\
&u_+\big|_{\Sigma_0} = x_3,
\end{cases}\ \ \ \ \ \ 
\begin{cases}
	&L_- u_- = 0,\\
	&u_-\big|_{\Sigma_0} = x_3.
\end{cases}
\end{equation}
Similarly, we also make the following definition for $x_i^\pm=x_i^\pm(t,x)$ with $i=1,2$:
\begin{equation}\label{definitionforxi}
	\begin{cases}
		&L_+ x_i^+ = 0,\\
		&x_i^+\big|_{\Sigma_0} = x_i,
	\end{cases}\ \ \ \ \ \ 
	\begin{cases}
		&L_- x_i^- = 0,\\
		&x_i^-\big|_{\Sigma_0} = x_i.
	\end{cases}
\end{equation}
It is clear that $x_3^\pm=u_\pm$ if we let $i=3$ in \eqref{definitionforxi}. 

\medskip

\item 
We shall use $C^{+}_{u_+}$ and $C^{-}_{u_-}$ to denote the level sets of the characteristic functions $u_+$ and $u_-$. Precisely, for given real numbers $u_{+,0}$, $u_{-,0}$ and $t_0$, we define 
the following characteristic hypersurfaces: 
\begin{align*}
	&C^{+}_{u_{+,0}}=\big\{(t,x)\in \mathbb{R}\times \mathbb{R}^3\big|u_+(t,x) =u_{+,0}\big\},\\
	&C^{-}_{u_{-,0}}=\big\{(t,x)\in \mathbb{R}\times \mathbb{R}^3\big|u_-(t,x) =u_{-,0}\big\},\\
	&C^{+,t_0}_{u_{+,0}}=\big\{(t,x)\in [0,t_0]\times \mathbb{R}^3\big|u_+(t,x) =u_{+,0}\big\},\\
	&C^{-,t_0}_{u_{-,0}}=\big\{(t,x)\in [0,t_0]\times \mathbb{R}^3\big|u_-(t,x) =u_{-,0}\big\}.
\end{align*}
By these constructions, we note that the vector fields $L_+$ and $L_-$ are the normals of (also parallel to) $C^{+}_{u_+}$ and $C^{-}_{u_-}$, respectively. Moreover, the foliations $\big\{C^{+}_{u_+}\,\big|u_+\in\mathbb{R}\,\big\}$ and $\big\{C^{-}_{u_-}\,\big|u_-\in\mathbb{R}\,\big\}$
indeed form a double characteristic foliation of the space-time $\mathbb{R}^{1+3}$. As we will see in the next sections, energy fluxes through these two characteristic foliations can help us handle the inadequacy of usual energies associated to the natural time foliation $\big\{\Sigma_t\,\big|\,t\in\mathbb{R}\big\}$ in some situations.

\medskip

\item 
Two space-time weight functions are defined as
\begin{equation}\label{weight}
\langle u_+\rangle=(R^2+|u_+|^2)^{\frac{1}{2}},\ \ \ \ \ \ \langle u_-\rangle=(R^2+|u_-|^2)^{\frac{1}{2}},
\end{equation}
where $R$ is a positive constant to be determined later on (e.g. $R=100$ suffices). 
It is easy to check that 
$L_\pm \langle u_\pm\rangle =0$. We fix a small number $\delta>0$ once for all in this paper (e.g. $\delta=0.1$ suffices) and let $\omega=1+\delta$.  We define the basic energy norm through $\Sigma_t$ and the basic flux norm through $C^{\pm,t}_{u_{\pm}}$ as 
\begin{equation*}
	E_{\pm}(t)=\int_{\Sigma_t}\langle u_{\mp}\rangle^{2\omega}| z_{\pm}|^2,\ \ \ \ \ \ F_{\pm}(t)=\sup_{u_{\pm}\in\mathbb{R}}\int_{C^{\pm,t}_{u_{\pm}}}\langle u_{\mp}\rangle^{2\omega}|z_{\pm}|^2,
\end{equation*}
where the integrals should be understood as 
\begin{equation*}
\int_{\Sigma_{t}}f :=\int_{\Sigma_{t}}f(t,x)dx,\ \ \ \ \ \ \int_{C^{\pm,t}_{u_{\pm}}}f:=\int_{C^{\pm,t}_{u_{\pm}}}f(t,x)d\sigma_{\pm}.
\end{equation*}
Given a multi-index $\alpha=(\alpha_1,\alpha_2,\alpha_3)$ of order $|\alpha|=\alpha_1+\alpha_2+\alpha_3$ with $\alpha_i \in \mathbb{Z}_{\geqslant 0}$ ($i=1,2,3$), we regard $\nabla^\alpha$ as  $\partial^{\alpha_1}_{x_1}\partial^{\alpha_2}_{x_2}\partial^{\alpha_3}_{x_3}$ and denote $f^{(\alpha)}:=\nabla^\alpha f$. 
Then the higher order energy and flux are defined as 
\begin{equation*}
E^{(\alpha)}_{\pm}(t)=\int_{\Sigma_t}\langle u_{\mp}\rangle^{2\omega}\big|\nabla z^{(\alpha)}_{\pm}\big|^2,\ \ \ \ \ \  F^{(\alpha)}_{\pm}(t)=\sup_{u_{\pm}\in\mathbb{R}}\int_{C^{\pm,t}_{u_{\pm}}}\langle u_{\mp}\rangle^{2\omega}\big|j^{(\alpha)}_{\pm}\big|^2.
\end{equation*}
Let $t^*$ be the lifespan of solutions to \eqref{eq:MHD}. We also define the total energy norms and total flux norms indexed by a number $k\in \mathbb{Z}_{\geqslant 0}$ as follows: 
\begin{align*}
	&E_{\pm} =\displaystyle\sup_{0\leqslant t\leqslant t^{*}}E_{\pm}(t),\ \ \ \ \ \ \ \ \ \ \ \ \ \ F_{\pm} =\displaystyle F_{\pm}(t^*), \\
	&E^{k}_{\pm}=\sup_{0\leqslant t\leqslant t^{*}}\sum_{|\alpha|=k}E^{(\alpha)}_{\pm}(t),\ \ \ \ \ \  F^{k}_{\pm}=\sum_{|\alpha|=k}F^{(\alpha)}_{\pm}(t^*).
\end{align*}
\end{enumerate}

\medskip

With these notations in hand, a slightly modified version of Theorem 1.3 in \cite{He-Xu-Yu}, which constructs the global solution to \eqref{eq:MHD}, can be stated as follows. One may refer to Section \ref{sec:energy} for a more concise proof.

\begin{theorem}[Global existence for ideal MHD with main \textit{a priori} estimate]\label{thm:global existence}
	Let $R\geqslant 100$, $\delta \in(0,\frac{2}{3})$ and $N_* \in \mathbb{Z}_{\geqslant 5}$. There exists a universal constant $\varepsilon_0\in(0,1)$ such that for all constants $0<\varepsilon\leqslant \varepsilon_0$, if the initial data $\big(z_+(0,x),z_-(0,x)\big)$ of \eqref{eq:MHD} satisfy
	\begin{equation}\label{eq:initial energy}
		\mathcal{E}^{N_*}(0) =\sum_{+,-}\sum_{k=0}^{N_*+1}\big\|(R^2+|x_3|^2)^{\frac{1+\delta}{2}}\nabla^{k} z_{\pm}(0,x)\big\|_{L^2(\mathbb{R}^3)}^2\leqslant\varepsilon^2,
	\end{equation}
	then the ideal MHD system \eqref{eq:MHD}  admits a unique global solution $\big(z_+(t,x),z_-(t,x)\big)$. 
	Moreover, there is a universal constant $C$ such that the solution $\big(z_+(t,x),z_-(t,x)\big)$ enjoys the following energy estimate:
	\begin{equation*}
	\sum_{+,-}\bigg(E_{\pm}+F_{\pm}+\sum_{k=0}^{N_{*}}E_{\pm}^k+\sum_{k=0}^{N_{*}}F_{\pm}^k\bigg)\leqslant C \mathcal{E}^{N_*}(0).
	\end{equation*}
\end{theorem}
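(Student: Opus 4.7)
The plan is to run a continuity/bootstrap argument on the total weighted norm
\[
\mathcal{E}(t) \;=\; \sum_{\pm}\Bigl(E_\pm(t)+F_\pm(t)+\sum_{k=0}^{N_*}\bigl(E_\pm^k(t)+F_\pm^k(t)\bigr)\Bigr),
\]
so that, starting from standard local well-posedness for \eqref{eq:MHD} and the smallness \eqref{eq:initial energy}, one upgrades the assumption $\mathcal{E}(t)\leqslant 2C\,\mathcal{E}^{N_*}(0)$ on $[0,t^*)$ to the conclusion $\mathcal{E}(t)\leqslant C\,\mathcal{E}^{N_*}(0)$. The bootstrap assumption, together with Sobolev embedding on $\Sigma_t$ and the weights $\langle u_\pm\rangle$, must first be converted into $L^\infty$-pointwise control of the low-order derivatives of $z_\pm$ (in particular of $Z_\pm$), which is what makes $L_\pm$ close to the background characteristic vector fields $\partial_t\pm\partial_3$ and justifies the use of \eqref{definitionforu}--\eqref{definitionforxi} as a genuine double characteristic foliation. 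In particular one checks that $u_\pm$ and $x_i^\pm$ remain smooth with $L_\mp u_\pm\sim \mp 2$ and that the weights \eqref{weight} are comparable to their initial values along the characteristics.

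The heart of the proof is a weighted energy identity for each $z_\pm$. Writing the transport equations \eqref{eq:MHD} as $L_\mp z_\pm=-\nabla p$, multiplying by $\langle u_\mp\rangle^{2\omega}z_\pm$, and using $L_\mp\langle u_\mp\rangle=0$ together with $\widetilde{\operatorname{div}}L_\mp=0$, the left-hand side becomes a spacetime divergence whose integration by parts on the slab $[0,t]\times\mathbb{R}^3$ produces exactly $E_\pm(t)+F_\pm(t)$ up to initial data. The same identity, commuted with $\nabla^\alpha$, yields $E_\pm^k(t)+F_\pm^k(t)$, with commutator terms of schematic form $\nabla Z_\mp\cdot\nabla^{\leqslant k}z_\pm$. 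The crucial point is the null structure: the transport operator acting on $z_\pm$ is $L_\mp$, which is (almost) tangent to $C^\mp_{u_\mp}$, so the transversal weight $\langle u_\mp\rangle^{2\omega}$ costs nothing when differentiated, and the interaction of $z_+$ with itself is forbidden; this is exactly what the paper inherits from \cite{Li-2021}. For the top order I would commute once with $\operatorname{curl}$ and work with $j_\pm$ via \eqref{eq:curlMHD-2}, so that the right-hand side is the genuinely null bilinear form $\nabla z_\mp\wedge\nabla z_\pm$, which carries an extra pairing of a ``good'' tangential derivative against a ``bad'' transversal one and thus balances the two characteristic weights symmetrically.

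The main obstacle, as emphasized in the introduction, is the pressure term $\nabla p$, for which I would prove a separate weighted $L^2$ estimate before feeding it into the energy inequality. Taking the divergence of the first equation in \eqref{eq:MHD} gives the Poisson equation
\[
-\Delta p \;=\; \partial_i Z_-^j\,\partial_j Z_+^i,
\]
so that $p$ is the Riesz-type multiplier applied to a quadratic expression in $\nabla z_\pm$ (plus background contributions). Because the right-hand side again has the null-type ``$+\cdot-$'' structure, combining Calderón--Zygmund with a weighted $L^2$ estimate adapted to $\langle u_\pm\rangle$ allows me to bound $\|\langle u_\mp\rangle^\omega\nabla^{k}p\|_{L^2(\Sigma_t)}$ in terms of $\mathcal{E}(t)^{1/2}\cdot(\text{top-order energy})^{1/2}$, with the key gain that one of the two $z$'s can be placed in $L^\infty$ via the already-established decay. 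The non-local character of $p$ is what makes these weighted estimates delicate: one must commute the weight $\langle u_\mp\rangle^{2\omega}$ past $\Delta^{-1}$, which is done by isolating a leading piece and treating the commutator via the good transport properties $L_\mp\langle u_\mp\rangle=0$ and pointwise bounds on $\nabla u_\pm$.

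Putting the three ingredients together, the commuted energy inequality reads, schematically,
\[
\mathcal{E}(t) \;\leqslant\; C\,\mathcal{E}^{N_*}(0) + C\,\mathcal{E}(t)^{3/2} + C\int_0^t \Phi(s)\,\mathcal{E}(s)\,ds,
\]
where $\Phi\in L^1_t$ comes from $L^\infty$-bounds on $\nabla Z_\pm$ obtained from Klainerman--Sobolev-type inequalities on the foliations $C^\pm_{u_\pm}$. A Grönwall argument followed by smallness of $\varepsilon$ then closes the bootstrap, and combined with the standard continuation criterion it yields global existence and the asserted energy bound. The hardest step by far is the weighted pressure estimate, which is why the paper isolates it in Section \ref{sec:preliminary}; once it is in hand, the remaining energy and flux bookkeeping is the routine backbone adapted from \cite{He-Xu-Yu}.
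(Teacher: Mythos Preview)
Your overall architecture---bootstrap on the total weighted energy, transport-type energy identity exploiting $L_\mp\langle u_\mp\rangle=0$ and $\widetilde{\operatorname{div}}\,L_\mp=0$, passage to $j_\pm$ via \eqref{eq:curlMHD-2} for the higher orders, and a separate treatment of the pressure---matches the paper's scheme (Sections~\ref{sec:preliminary}--\ref{sec:energy}). Two points deserve correction.

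\textbf{The pressure estimate.} Your plan to bound $\|\langle u_\mp\rangle^\omega\nabla^k p\|_{L^2}$ by ``Calder\'on--Zygmund with a weighted $L^2$ estimate'' and by ``commuting the weight past $\Delta^{-1}$'' is the weak point. The weight $\langle u_\mp\rangle^{2\omega}$ is anisotropic (it depends essentially only on $x_3\mp t$) and, since $2\omega=2(1+\delta)>2$, it is \emph{not} a Muckenhoupt $A_2$ weight even in one dimension; the Riesz transforms are therefore not bounded on $L^2(\langle u_\mp\rangle^{2\omega}dx)$, and a weight--$\Delta^{-1}$ commutator does not obviously save you. The paper avoids this entirely: it writes $p$ via the Newtonian potential \eqref{eq:pressure}, splits the kernel with a cutoff $\theta(|x-x'|)$ into a near part $\mathbf{A}_l$ ($|x-x'|\leqslant 2$) and a far part $\mathbf{B}_l$ ($|x-x'|\geqslant 1$) (Lemma~\ref{lemma:pressure}), and then uses only the elementary weight transfer of Lemma~\ref{lemma:weights}(ii): for $|x-x'|\leqslant 2$ the weights at $x$ and $x'$ are comparable, while for $|x-x'|\geqslant 1$ one pays a polynomial correction $|x-x'|^{3\omega/2}$ which is absorbed by the extra kernel decay obtained after integrating by parts. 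This yields the spacetime bound $\int\!\!\int \langle u_\mp\rangle^{2\omega}\langle u_\pm\rangle^{\omega}|\nabla^l p|^2\lesssim (C_1\varepsilon)^4$ (Lemmas~\ref{lemma:pressureL2L2}--\ref{lemma:pressureL2L2-all}), which is exactly the weight needed to pair against the flux.

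\textbf{Closing the bootstrap.} The paper does not close via a Gr\"onwall inequality in $t$ with an $L^1_t$ factor $\Phi$. Instead, the nonlinear spacetime integrals are bounded \emph{directly} by pairing H\"older-wise a flux factor $\bigl(\int\!\!\int \langle u_\mp\rangle^{2\omega}\langle u_\pm\rangle^{-\omega}|z_\pm|^2\bigr)^{1/2}\lesssim C_1\varepsilon$ (Lemma~\ref{lemma:flux}) against the pressure/source factor above, giving $\mathcal{E}\leqslant C_2\varepsilon^2+C_2(C_1)^3\varepsilon^3$ with no time integration left over; choosing $C_1^2=2C_2$ and $\varepsilon$ small closes immediately. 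Your Gr\"onwall variant could be made to work, but the flux route is both simpler and is what makes the extra $\langle u_\pm\rangle^\omega$ weight in the pressure estimate natural.
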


\medskip

\subsection{Infinities and Scattering fields}

Now we come to the main subject of this paper by defining the scattering fields (radiation fields) associated to the solution $\big(z_+(t,x),z_-(t,x)\big)$. 
The starting point is their required geometric objects, i.e. the characteristic lines and the characteristic infinities. 
The geometric constructions can be read off easily from Figure \ref{fig:infinities}. 
\begin{figure}[ht]
	\centering
	\includegraphics[width=3.3in]{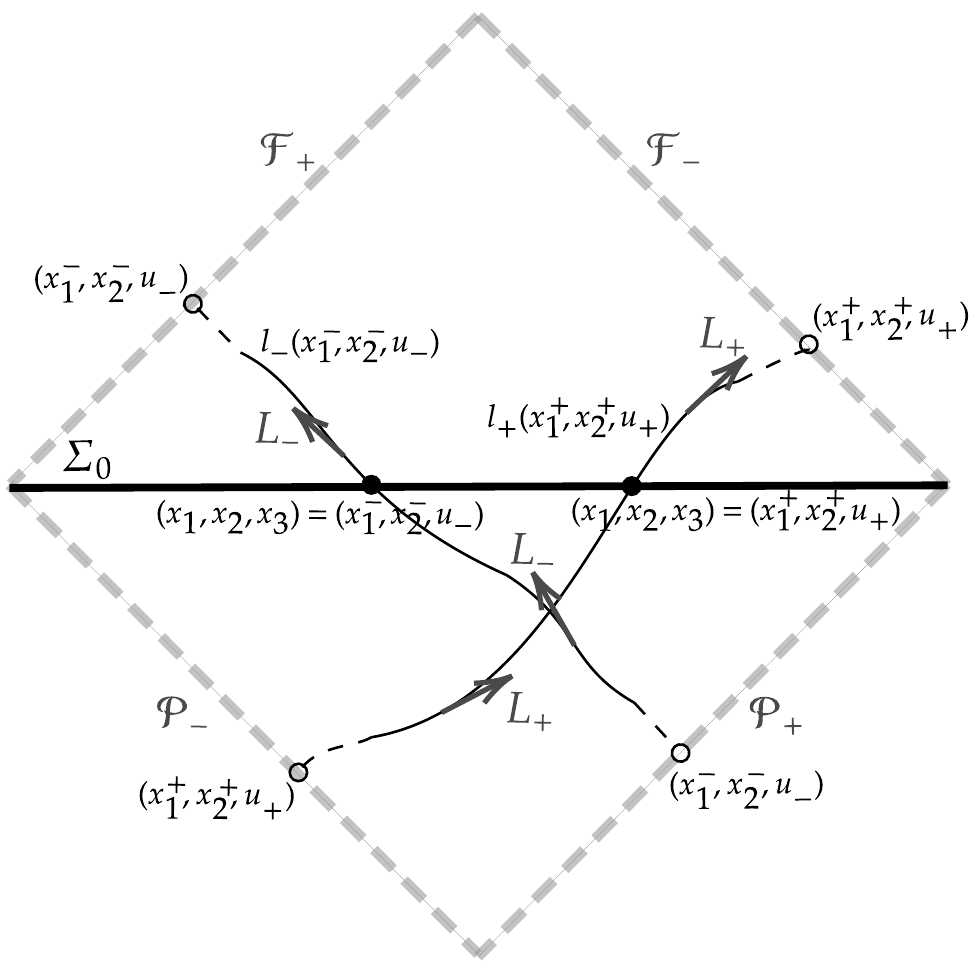}
	\caption{Traveling lines and infinities}
	\label{fig:infinities}
\end{figure}
\begin{enumerate}[(i)]
\medskip
\item 
Given a point $(x_1,x_2,x_3) \in \Sigma_0$, there is a unique characteristic line $l_-$  
(left-traveling to the future and right-traveling to the past)  along the tangent vector field $L_-$,  which can be parameterized by 
$l_-: \mathbb{R}\rightarrow \mathbb{R}\times \mathbb{R}^3$, $ t\mapsto (t,x_1^-,x_2^-,u_-)$
and satisfies $(x_1^-,x_2^-,u_-)\big|_{l_-}\equiv (x_1,x_2,x_3)$.  We also denote this line by $l_-(x_1^-,x_2^-,u_-)$. It follows that  $l_-(x_1^-,x_2^-,u_-)\subset C^-_{u_-}$. 
Similarly, we can define the characteristic line $l_+(x_1^+,x_2^+,u_+)$ (right-traveling to the future and left-traveling to the past).  

\medskip

\item 
Let us denote the collection of all characteristic lines by 
\begin{align*}
&\mathcal{F}_+=\big\{l_-(x_1^-,x_2^-,u_-)\,\big| \,t=+\infty,\,(x_1^-,x_2^-,u_-)\in \mathbb{R}^3\big\},\\
&\mathcal{F}_-=\big\{l_+(x_1^+,x_2^+,u_+)\,\big| \,t=+\infty,\,(x_1^+,x_2^+,u_+)\in \mathbb{R}^3\big\},\\
&\mathcal{P}_+=\big\{l_-(x_1^-,x_2^-,u_-)\,\big| \,t=-\infty,\,(x_1^-,x_2^-,u_-)\in \mathbb{R}^3\big\},\\ &\mathcal{P}_-=\big\{l_+(x_1^+,x_2^+,u_+)\,\big| \,t=-\infty,\,(x_1^+,x_2^+,u_+)\in \mathbb{R}^3\big\}.
\end{align*}
We call them as the left future characteristic infinity, the right future characteristic infinity, the right past characteristic infinity and the left past characteristic infinity respectively. 
We remark here that $\mathcal{F}_+$ and $\mathcal{P}_+$ can be regarded as differentiable manifolds if we use $(x_1^-,x_2^-,u_-)$ as a fixed global coordinate system on them, and so as $\mathcal{F}_-$ and $\mathcal{P}_-$ if we use $(x_1^+,x_2^+,u_+)$. Moreover, these four infinities are exactly the spaces where the scattering fields live.

\medskip

\item As depicted in the following Figure \ref{fig:integration},  
\begin{figure}[ht]
	\centering
	\includegraphics[width=5.6in]{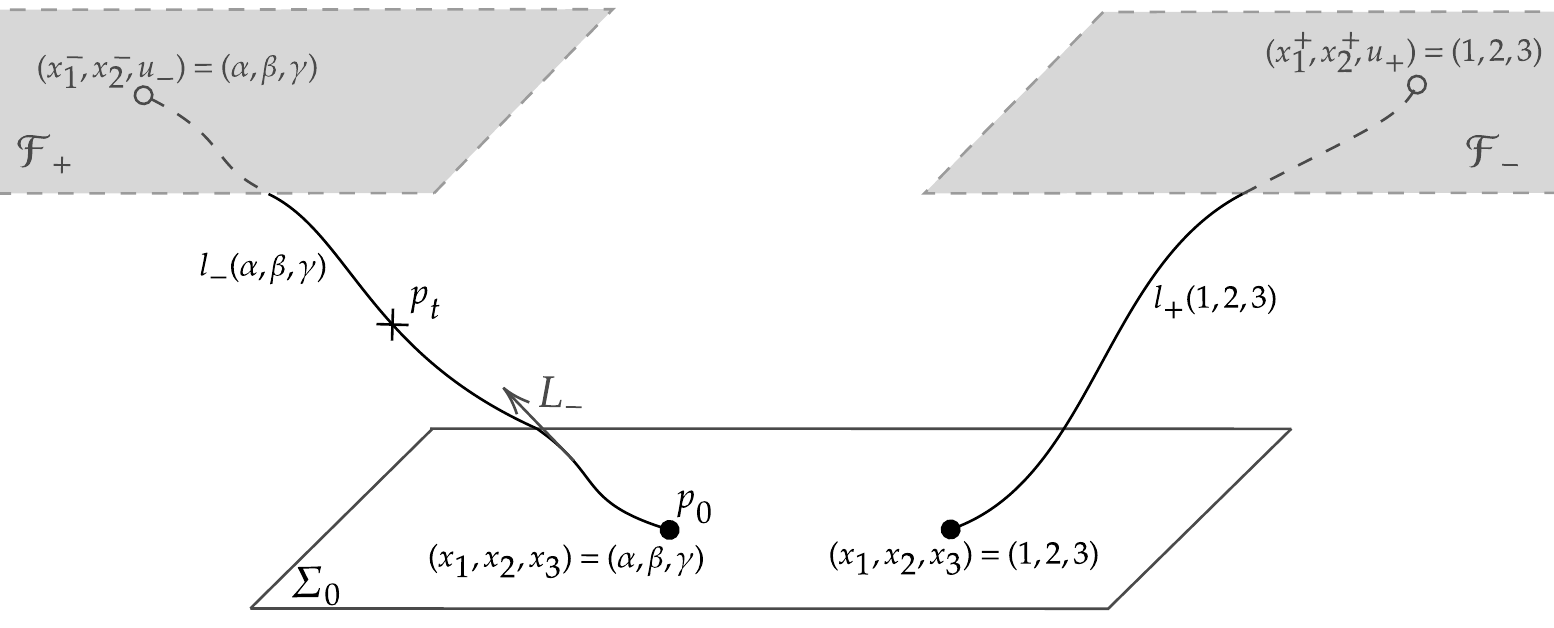}
	\caption{Integration along characteristic lines}
	\label{fig:integration}
\end{figure}
for a fixed point $p_0=(\alpha,\beta,\gamma)$ on $\Sigma_0$, we see that the characteristic line $l_-(\alpha,\beta,\gamma)$ passes through it and also hits $\mathcal{F}_+$ at the point $(x_1^-,x_2^-,u_-)=(\alpha,\beta,\gamma)$. 
Consider a point $p_{t}=(t,\alpha,\beta,\gamma)$ on $l_-(\alpha,\beta,\gamma)$ with $t\geqslant 0$. According to the first equation of \eqref{eq:MHD}, we have
\begin{equation*}
\nabla_{L_-}\big(z_{+}(\tau,\alpha,\beta,\gamma)\big) =-\nabla p(\tau,\alpha,\beta,\gamma).
\end{equation*}
Integrating this equation along the segment	of $l_-(\alpha,\beta,\gamma)$ between $p_0$ and $p_t$ then gives the expression    
\begin{equation*}
	z_+(t,\alpha,\beta,\gamma)=z_+(0,\alpha,\beta,\gamma)-\int_0^{t}\nabla p(\tau,\alpha,\beta,\gamma)\sqrt{1+|z_--B_0|^2}
	(\tau,\alpha,\beta,\gamma)d\tau.
\end{equation*}
Here we have used the fact that the measure of  $l_-(\alpha,\beta,\gamma)$ with respect to time $t$ (parameter of curve) can be written as 
 $|L_-|=\sqrt{1+|Z_-|^2}=\sqrt{1+|z_--B_0|^2}$ formally. Indeed, all the vector fields ($z_+$ and $z_-$ with various initial points and/or at various times) have the corresponding integral expressions. 
\end{enumerate}

\medskip

It is tempting to think that these integral expressions converge as $t\to\pm\infty$ and naively lead to explicit formulas of the scattering fields on
$\mathcal{F}_+$, $\mathcal{F}_-$, $\mathcal{P}_+$ and $\mathcal{P}_-$ respectively:
\begin{equation}\label{scatteringfields}
	\begin{cases}
		&\displaystyle z_+(+\infty;x_1^-,x_2^-,u_-):=z_+(0,x_1^-,x_2^-,u_-)-\int_0^{+\infty} \big(
		\nabla p\cdot\sqrt{1+|z_--B_0|^2}\big)(\tau,x_1^-,x_2^-,u_-)d\tau,\\
		&\displaystyle z_-(+\infty;x_1^+,x_2^+,u_+):=z_-(0,x_1^+,x_2^+,u_+)-\int_0^{+\infty} \big(
		\nabla p\cdot\sqrt{1+|z_++B_0|^2}\big)
		(\tau,x_1^+,x_2^+,u_+)d\tau,\\
		&\displaystyle z_+(-\infty;x_1^-,x_2^-,u_-):=z_+(0,x_1^-,x_2^-,u_-)-\int_0^{-\infty} \big(
		\nabla p\cdot\sqrt{1+|z_--B_0|^2}\big)(\tau,x_1^-,x_2^-,u_-)d\tau,\\
		&\displaystyle z_-(-\infty;x_1^+,x_2^+,u_+):=z_-(0,x_1^+,x_2^+,u_+)-\int_0^{-\infty} \big(
		\nabla p\cdot\sqrt{1+|z_++B_0|^2}\big)
		(\tau,x_1^+,x_2^+,u_+)d\tau.
	\end{cases}
\end{equation}
\smallskip
This expectation can be fulfilled in the following result. 
\begin{theorem}[Existence of scattering fields on infinities]\label{thm:existence} For the solution $\big(z_+(t,x),z_-(t,x)\big)$ constructed in Theorem \ref{thm:global existence}, all the  integrals in formulas of 
\eqref{scatteringfields}
converge. Thus, the vector fields $z_+(+\infty;x_1^-,x_2^-,u_-)$, $z_-(+\infty;x_1^+,x_2^+,u_+)$, $z_+(-\infty;x_1^-,x_2^-,u_-)$ and  $z_-(-\infty;x_1^+,x_2^+,u_+)$ in \eqref{scatteringfields} are well-defined, i.e. point-wisely defined, and we call them as the left future scattering field, the right future scattering field, the right past scattering field and the left past scattering field respectively. 
\end{theorem}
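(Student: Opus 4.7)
The plan is to reduce the claim to a pointwise integrability statement along characteristic lines. I will focus on the first formula in \eqref{scatteringfields} for $z_+(+\infty;x_1^-,x_2^-,u_-)$; the other three cases follow by interchanging the roles of $(z_+,Z_+)$ and $(z_-,Z_-)$ and, where appropriate, by the time reversal $t\mapsto -t$ that swaps future and past infinities. Throughout, I regard the solution and all auxiliary geometric objects from Theorem \ref{thm:global existence} as fixed, and I work under the bootstrap smallness $\|z_\pm\|_{L^\infty}\lesssim\varepsilon$ which will be established in Section \ref{sec:energy}.

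\textbf{Step 1 (Pointwise decay for $\nabla p$).} From the weighted energy/flux inequalities of Theorem \ref{thm:global existence}, applied together with a weighted Sobolev embedding on $\Sigma_t$, I expect to extract pointwise bounds of the shape
\begin{equation*}
|z_\pm(t,x)|+|\nabla z_\pm(t,x)|\lesssim \varepsilon\,\langle u_\mp\rangle^{-\omega}.
\end{equation*}
The essential ingredient is a double-weight pointwise estimate for the gradient of the pressure of the form
\begin{equation*}
|\nabla p(t,x)|\lesssim \varepsilon^{2}\,\langle u_+\rangle^{-1-\eta}\langle u_-\rangle^{-1-\eta}
\end{equation*}
for some $\eta>0$. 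This will be read off from the dedicated pressure analysis collected in Section \ref{sec:preliminary}. The heuristic is that $p$ solves $-\Delta p=\partial_i\partial_j(Z_+^{i}Z_-^{j})$, and this quadratic source is exactly a cross-term in the Els\"asser variables, so the $\langle u_-\rangle$-decay of $z_+$ and the $\langle u_+\rangle$-decay of $z_-$ combine to give both weights simultaneously; the remaining task is to control the non-local inversion of $\Delta$ in weighted spaces without losing those decays.

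\textbf{Step 2 (Geometry of $l_-$).} Along $l_-(x_1^-,x_2^-,u_-)$ parameterized by $\tau$, the coordinates $(x_1^-,x_2^-,u_-)$ are constant by definition, and under the bootstrap we have $Z_-=-B_0+z_-=-B_0+O(\varepsilon)$, so $L_-=\partial_t-\partial_3+O(\varepsilon)\nabla$. Integrating $L_+u_+=0$ against this (and using the transport identity $L_- u_+=(Z_--Z_+)\cdot\nabla u_+$ with the smallness of $z_\pm$) one obtains the linear growth
\begin{equation*}
u_+\bigl(\tau,x_1^-,x_2^-,u_-\bigr)=u_-+2\tau+O(\varepsilon\langle\tau\rangle),\qquad \langle u_+\rangle\sim \langle\tau\rangle+\langle u_-\rangle
\end{equation*}
for $\tau\geqslant 0$. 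This is the mechanism that converts a $u_+$-weighted bound into genuine $\tau$-decay along the integration curve.

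\textbf{Step 3 (Convergence).} Substituting the estimate of Step 1 along the curve of Step 2, the integrand in the first line of \eqref{scatteringfields} obeys
\begin{equation*}
\Bigl|\nabla p\cdot\sqrt{1+|z_--B_0|^{2}}\Bigr|(\tau,x_1^-,x_2^-,u_-)\lesssim \varepsilon^{2}\,\langle\tau\rangle^{-1-\eta}\langle u_-\rangle^{-1-\eta},
\end{equation*}
where the square-root factor is uniformly bounded because $|B_0|=1$ and $|z_-|\lesssim\varepsilon$. The right-hand side is integrable on $[0,+\infty)$ and the bound is locally uniform in $(x_1^-,x_2^-,u_-)$, so the improper integral converges absolutely and the resulting vector field on $\mathcal{F}_+$ is defined pointwise. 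The remaining three scattering fields are produced by the same scheme, integrating along $l_+$ in place of $l_-$ and/or reversing the direction of $\tau$.

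\textbf{Main obstacle.} The only substantive difficulty is obtaining the two-sided weighted pointwise decay of $\nabla p$ in Step 1. Since $\nabla p=\nabla\Delta^{-1}\partial_i\partial_j(Z_+^{i}Z_-^{j})$ is non-local, a direct product of the one-sided pointwise bounds for $z_+$ and $z_-$ cannot simply be passed through the Riesz transforms; rather, one must exploit the MHD null structure (the bilinearity in the mixed variables $Z_+$ and $Z_-$) to balance the $\langle u_+\rangle$ and $\langle u_-\rangle$ weights after inversion of the Laplacian. Once that pressure estimate is secured in Section \ref{sec:preliminary}, the remainder of the proof is a routine combination of Sobolev embedding, transport along $L_\pm$, and a single $\tau$-integration.
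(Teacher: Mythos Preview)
Your overall strategy is sound and ends at the same place as the paper, but the pressure estimate you postulate in Step~1 is stronger than what is actually proved (or needed), and the paper's route is simpler.  The paper does \emph{not} establish the two-sided pointwise bound
\[
|\nabla p(t,x)|\lesssim \varepsilon^{2}\,\langle u_+\rangle^{-1-\eta}\langle u_-\rangle^{-1-\eta};
\]
indeed, the far-field part $\mathbf{B}_l$ of the convolution representation (Lemma~\ref{lemma:pressure}) would require transferring both weights $\langle u_\pm(t,x)\rangle^{\omega}$ through the kernel $|x-x'|^{-4}$, and the naive comparison $\langle u_\pm(t,x)\rangle\lesssim \langle u_\pm(t,x')\rangle+|x-x'|$ produces a growth factor $|x-x'|^{2\omega}$ that destroys integrability for $\omega>1/2$.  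This is precisely the obstacle you identify at the end, and it is not resolved by anything in Section~\ref{sec:preliminary}.

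What the paper actually does (Lemma~\ref{lemma:pressure derivative bound}) is apply the separation inequality $\langle u_+\rangle\langle u_-\rangle\gtrsim R+\tau$ at the \emph{source point} $x'$, inside the convolution, before any weight transfer is needed.  This yields the uniform-in-$x$ bound
\[
|\nabla p(\tau,x)|\lesssim \frac{\varepsilon^{2}}{(R+\tau)^{\omega}},
\]
which is already $\tau$-integrable and makes your entire Step~2 superfluous: one does not need to track the growth of $u_+$ along $l_-$ at all.  So your plan works once Step~1 is replaced by this weaker, directly provable estimate, and Step~2 can then be deleted.
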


\smallskip

The previous constructions of scattering fields on infinities are shown in the following Figure \ref{fig:inverse}. 
\begin{figure}[ht]
	\centering
	\includegraphics[width=3.5in]{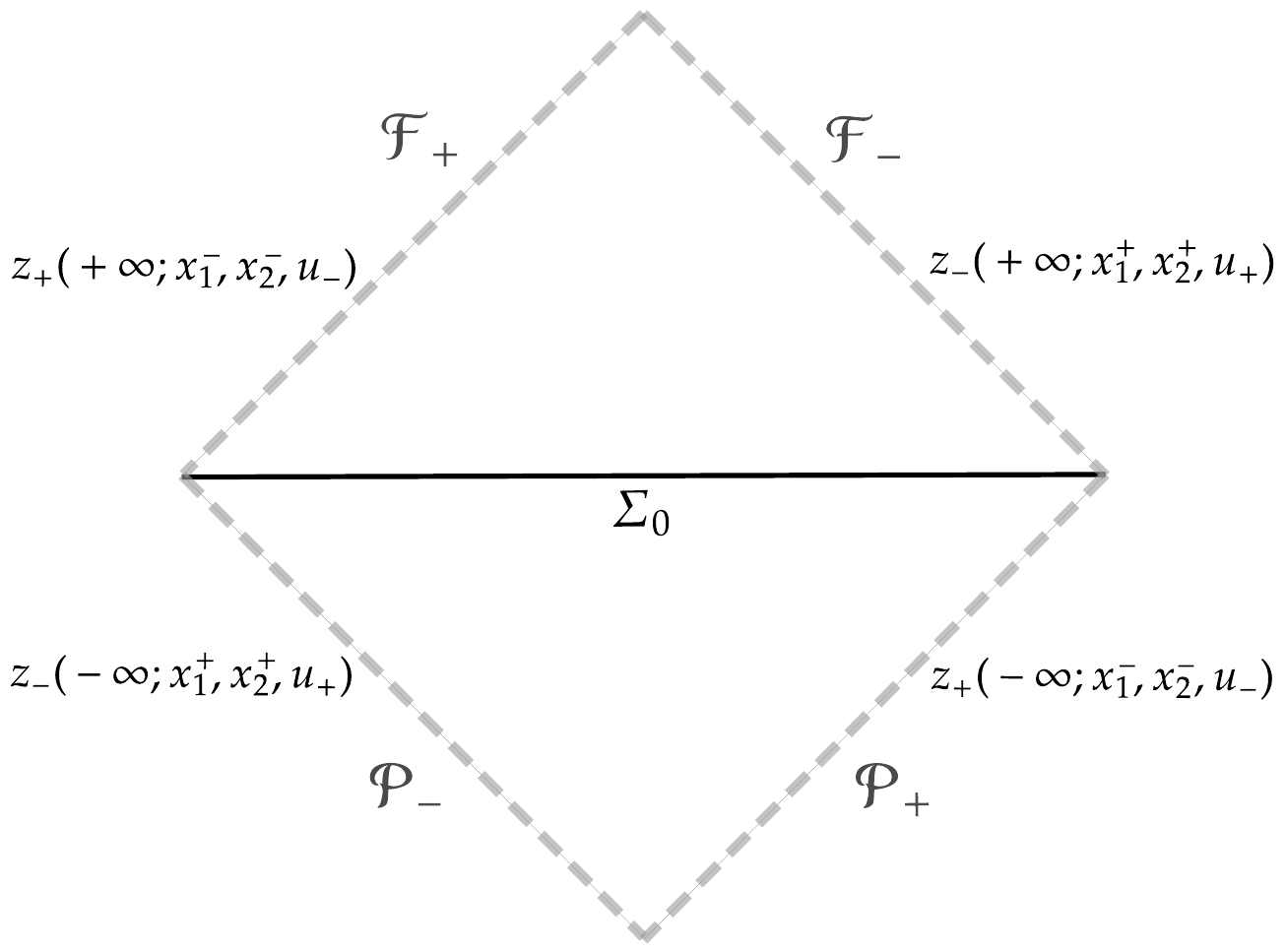}
	\caption{Scattering fields at infinities}
	\label{fig:inverse}
\end{figure}

Using the coordinate systems on infinities, we can assume that the measures on $\mathcal{F}_+$ and $\mathcal{P}_+$ are $d\mu_-:=dx_1^-dx_2^-du_-$ while the measures on $\mathcal{F}_-$ and $\mathcal{P}_-$ are $d\mu_+:=dx_1^+dx_2^+du_+$ (all up to universal constants). 
Based on this observation, we are able to formulate the following two theorems which both analyze the information of the weighted Sobolev spaces where the scattering fields live at infinities. 

\begin{theorem}[Weighted Sobolev spaces for scattering fields]\label{thm:weightSobolev}
The scattering fields constructed in Theorem \ref{thm:existence} live in the weighted Sobolev spaces as follows:
\begin{equation}\label{spaces}
	\begin{cases}
		&\displaystyle z_+(+\infty;x_1^-,x_2^-,u_-)\in H^{N_*+1}(\mathcal{F}_+,\langle u_-\rangle^{2\omega}d\mu_-),\\
		&\displaystyle z_-(+\infty;x_1^+,x_2^+,u_+)\in H^{N_*+1}(\mathcal{F}_-,\langle u_+\rangle^{2\omega}d\mu_+),\\
		&\displaystyle z_+(-\infty;x_1^-,x_2^-,u_-)\in H^{N_*+1}(\mathcal{P}_+,\langle u_-\rangle^{2\omega}d\mu_-),\\
		&\displaystyle z_-(-\infty;x_1^+,x_2^+,u_+)\in H^{N_*+1}(\mathcal{P}_-,\langle u_+\rangle^{2\omega}d\mu_+).
	\end{cases}
\end{equation}
\end{theorem}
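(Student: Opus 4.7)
The plan is to deduce the weighted Sobolev regularity of the scattering fields from the uniform-in-$t$ weighted energy estimates of Theorem \ref{thm:global existence}, combined with the pointwise convergence provided by Theorem \ref{thm:existence}. I describe the argument for $z_+(+\infty;\cdot)\in H^{N_*+1}(\mathcal F_+,\langle u_-\rangle^{2\omega} d\mu_-)$; the remaining three cases are handled symmetrically, by swapping the roles of $+$ and $-$ for the right infinities and by time reversal for the past ones.

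First, I identify $\mathcal F_+$ with $\mathbb R^3$ via the coordinate system $(x_1^-,x_2^-,u_-)$. On each slice $\Sigma_t$, this same triple gives a second global coordinate system (existing on all of $\Sigma_t$ thanks to global existence together with $L_-x_i^-=L_-u_-=0$), and the change-of-variables map $\Phi_t\colon(x_1,x_2,x_3)\mapsto(x_1^-,x_2^-,u_-)$ is a near-identity diffeomorphism of $\mathbb R^3$: its deviation from the identity, together with derivatives up to order $N_*+2$, is of size $O(\varepsilon)$ by Sobolev embedding from the energy and flux norms of $z_-$. Consequently, $\Phi_t$ preserves the weighted Sobolev norm (both coordinate systems carry the same weight $\langle u_-\rangle^{2\omega}$, since $u_-$ is intrinsic to spacetime rather than tied to one frame) up to a universal constant, and hence
\[
\|z_+(t,\cdot)\|_{H^{N_*+1}(\Sigma_t,\,\langle u_-\rangle^{2\omega}d\mu_-)}^{2}\;\lesssim\;E_+(t)+\sum_{k=0}^{N_*}E_+^{k}(t)\;\leqslant\;C\,\mathcal E^{N_*}(0)
\]
uniformly in $t\geqslant 0$ by Theorem \ref{thm:global existence}.

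Next, I pass to the limit $t\to+\infty$. Theorem \ref{thm:existence} gives the pointwise convergence $z_+(t;x_1^-,x_2^-,u_-)\to z_+(+\infty;x_1^-,x_2^-,u_-)$ on $\mathcal F_+$. The uniform bound just established shows that $\{z_+(t,\cdot)\circ\Phi_t^{-1}\}_{t\geqslant 0}$ is a bounded family in the reflexive Hilbert space $H^{N_*+1}(\mathbb R^3,\langle u_-\rangle^{2\omega}d\mu_-)$; therefore it admits a weakly convergent subsequence, whose weak limit must coincide with the pointwise limit by uniqueness of distributional limits. Weak lower semicontinuity of the norm then yields $\|z_+(+\infty;\cdot)\|_{H^{N_*+1}}^{2}\leqslant C\,\mathcal E^{N_*}(0)$, completing the proof for $\mathcal F_+$.

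The main obstacle lies in the uniform control of the coordinate change $\Phi_t$ at $N_*+1$ derivatives. Commuting $\nabla^{N_*+1}$ through the transport equations $L_-(x_i^-,u_-)=0$ produces source terms involving top-order derivatives of $z_-$ paired with high derivatives of the Jacobian, which must be estimated in the weighted $L^2$ framework without losing powers of $\langle u_-\rangle$. This interacts delicately with the pressure estimates of Section \ref{sec:preliminary}: $\nabla p$ enters the transport of $z_+$ through an elliptic equation whose $(N_*+1)$-st order regularity has to be propagated by the very same weighted commutator scheme. Closing both estimates simultaneously at top order is the essential technical point that underlies the passage from Theorem \ref{thm:global existence} to the weighted Sobolev membership asserted here.
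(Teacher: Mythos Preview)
Your strategy---uniform $H^{N_*+1}$ bounds on $\Sigma_t$ in the $(x_1^-,x_2^-,u_-)$ frame, then Banach--Alaoglu plus pointwise identification of the limit---is genuinely different from the paper's route, which never passes through weak compactness. The paper instead works directly with the Duhamel-type identity
\[
z_+(+\infty;\,\cdot\,) = z_+(0,\,\cdot\,) - \int_0^{+\infty}\bigl(\nabla p\cdot\sqrt{1+|z_--B_0|^2}\bigr)(\tau,\,\cdot\,)\,d\tau,
\]
bounds the initial piece by $E_+(0)$, and controls the time integral in weighted $L^2$ using the spacetime pressure estimate $\int\!\!\int \langle u_-\rangle^{2\omega}\langle u_+\rangle^{\omega}|\nabla^{l}p|^2 \lesssim \varepsilon^4$ (Lemma~\ref{lemma:pressureL2L2-all}). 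The commutation of $\nabla^{\beta}$ with the $\tau$-integral at top order is handled by an explicit induction (Claim~\ref{lemma:claim}), and the whole argument yields the quantitative bound $\mathbf{Q_2}\lesssim\varepsilon^6$, which immediately gives Theorem~\ref{thm:deviation} as well.

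Your argument has a concrete gap. You claim that $\Phi_t - \mathrm{I}$ is $O(\varepsilon)$ together with derivatives up to order $N_*+2$ ``by Sobolev embedding from the energy and flux norms of $z_-$.'' But the weighted Sobolev inequality (Lemma~\ref{lemma:Sobolev}) only places $\nabla^k z_-$ in $L^\infty$ for $k\leqslant N_*-1$; the bootstrap~\eqref{Bootstrap on geometry} correspondingly controls only the first derivative of the Jacobian. Differentiating the transport equation $L_-(\partial x_i^-/\partial x_j)=\cdots$ to order $N_*+1$ produces a source containing $\nabla^{N_*+1}z_-$, which is available only in weighted $L^2$, not $L^\infty$. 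So the asserted uniform equivalence of the Cartesian and characteristic $H^{N_*+1}$ norms on $\Sigma_t$ is not a consequence of the established estimates; proving it would require a separate top-order commutator analysis of comparable difficulty to the paper's Claim~\ref{lemma:claim}. Your final paragraph acknowledges this obstacle but does not resolve it. By contrast, the paper's approach never needs high-order control of $\Phi_t$: all derivatives are taken in the characteristic frame from the outset, and the burden is shifted entirely onto the pressure estimates, which are already in hand from Section~\ref{sec:preliminary}.
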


\begin{theorem}[Deviation of  scattering fields from linear propagating fields] \label{thm:deviation}
Let the initial data of \eqref{eq:MHD} satisfy \[\|z_{\pm}(0,x_1^\mp,x_2^\mp,u_\mp)\|_{H^{N_*+1}(\Sigma_0,\langle u_\mp\rangle^{2\omega}d\mu_\mp)} \leqslant \varepsilon,\] where $N_*\geqslant 5$ and $0<\varepsilon\leqslant\varepsilon_0$ with $\varepsilon_0$ determined in Theorem \ref{thm:global existence}.  Then it holds for the scattering fields constructed in Theorem \ref{thm:existence} and their corresponding linear propagating fields that 
\begin{equation}\label{eqdeviation}
	\begin{cases}
		&\displaystyle \big\|z_+(+\infty;x_1^-,x_2^-,u_-)-z_+(0,x_1^-,x_2^-,u_-)\big\|_{H^{N_*+1}(\mathcal{F}_+,\langle u_-\rangle^{2\omega}d\mu_-)}=O(\varepsilon^2),\\
		&\displaystyle \big\|z_-(+\infty;x_1^+,x_2^+,u_+)-z_-(0,x_1^+,x_2^+,u_+)\big\|_{H^{N_*+1}(\mathcal{F}_-,\langle u_+\rangle^{2\omega}d\mu_+)}=O(\varepsilon^2),\\
		&\displaystyle \big\|z_+(-\infty;x_1^-,x_2^-,u_-)-z_+(0,x_1^-,x_2^-,u_-)\big\|_{H^{N_*+1}(\mathcal{P}_+,\langle u_-\rangle^{2\omega}d\mu_-)}=O(\varepsilon^2),\\
		&\displaystyle \big\|z_-(-\infty;x_1^+,x_2^+,u_+)-z_-(0,x_1^+,x_2^+,u_+)\big\|_{H^{N_*+1}(\mathcal{P}_-,\langle u_+\rangle^{2\omega}d\mu_+)}=O(\varepsilon^2).
	\end{cases}
\end{equation}
\end{theorem}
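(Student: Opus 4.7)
The plan is to exploit the explicit integral representation in \eqref{scatteringfields}, which identifies each deviation as a time integral along a characteristic line of a quadratic quantity involving $\nabla p$. For concreteness I treat the first line of \eqref{eqdeviation}; the other three follow by symmetric reasoning (exchanging the roles of $\pm$ and/or reversing the time direction). From \eqref{scatteringfields} I directly obtain
\begin{equation*}
z_+(+\infty;x_1^-,x_2^-,u_-) - z_+(0,x_1^-,x_2^-,u_-) = -\int_0^{+\infty}\bigl(\nabla p\cdot\sqrt{1+|z_--B_0|^2}\bigr)(\tau,x_1^-,x_2^-,u_-)\,d\tau,
\end{equation*}
and since $L_-=\partial_\tau$ in the adapted coordinates $(\tau,x_1^-,x_2^-,u_-)$, the derivatives $\nabla^\alpha_{(x_1^-,x_2^-,u_-)}$ with $|\alpha|\leq N_*+1$ commute with the $\tau$-integration.

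Applying Minkowski's inequality in $L^2(\mathcal{F}_+,\langle u_-\rangle^{2\omega}d\mu_-)$ reduces the problem to bounding, for each $\tau$,
\begin{equation*}
G(\tau) := \sum_{|\alpha|\leq N_*+1}\Bigl\|\nabla^\alpha_{(x_1^-,x_2^-,u_-)}\bigl(\nabla p\sqrt{1+|z_--B_0|^2}\bigr)(\tau,\cdot)\Bigr\|_{L^2(\langle u_-\rangle^{2\omega}d\mu_-)}
\end{equation*}
and showing $G\in L^1_\tau([0,+\infty))$ with norm $O(\varepsilon^2)$. Pulling back along the characteristic flow, the near-identity coordinate map $(x_1,x_2,x_3)\mapsto(x_1^-(\tau,x),x_2^-(\tau,x),u_-(\tau,x))$ on $\Sigma_\tau$ has Jacobian uniformly close to $1$ (by the bootstrap of Section \ref{sec:energy}) and turns $d\mu_-$ into the Euclidean measure $dx$. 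Expressing $\nabla^\alpha_{(x_1^-,x_2^-,u_-)}$ in terms of Euclidean derivatives $\nabla^\beta_x$ via the chain rule produces only lower-order remainder terms already controlled by $E_\pm^k, F_\pm^k$ from Theorem \ref{thm:global existence}. A Sobolev embedding for $\sqrt{1+|z_--B_0|^2}=1+O(\varepsilon)$ then shows this factor is harmless, so the crux becomes a weighted estimate of the form $\sum_{k\leq N_*+1}\|\langle u_-\rangle^{\omega}\nabla^k\nabla p(\tau,\cdot)\|_{L^2(\Sigma_\tau)}$ with integrable decay in $\tau$.

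The main obstacle is precisely this weighted estimate on $\nabla p$. Since $p$ is recovered from $-\Delta p=\partial_i z_-^j\,\partial_j z_+^i$ via a Riesz-type nonlocal operator, it is quadratic in the small fluctuations and carries a genuine null structure: one factor is transported along $L_-$ and the other along $L_+$, so they decay in the opposite weights $\langle u_-\rangle^{-\omega}$ and $\langle u_+\rangle^{-\omega}$, respectively. Propagating the single-sided $\langle u_-\rangle^{\omega}$ weight through the nonlocal Riesz operator, while carefully splitting the quadratic source into pieces adapted to each characteristic foliation so that the $\langle u_+\rangle^{-\omega}$ factor produces integrable-in-$\tau$ decay, will require the pressure estimates to which Section \ref{sec:preliminary} is devoted. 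Once those estimates are in hand, a direct application of Minkowski and Cauchy--Schwarz completes the proof of \eqref{eqdeviation}.
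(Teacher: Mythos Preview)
Your overall plan is right in spirit, but the reduction via Minkowski's inequality does not close with the estimates the paper actually proves. After Minkowski you need $G\in L^1_\tau$, i.e.\ pointwise-in-$\tau$ decay of $\|\langle u_-\rangle^{\omega}\nabla^k\nabla p(\tau,\cdot)\|_{L^2(\Sigma_\tau)}$. The gain $\langle u_+\rangle^{-\omega}$ you invoke from the $z_-$ factor is a function of $x$ on each slice, not of $\tau$ alone; on every $\Sigma_\tau$ the coordinate $u_+$ still ranges over all of $\mathbb R$, so there is no uniform $\tau$-decay of $G(\tau)$ coming from it. The pressure bounds proved in Section~\ref{sec:preliminary} (Lemma~\ref{lemma:pressureL2L2-all}) are space-time $L^2$ estimates of the form $\int_\tau\int_x\langle u_-\rangle^{2\omega}\langle u_+\rangle^{\omega}|\nabla^l p|^2\lesssim\varepsilon^4$, and the flux bound (Lemma~\ref{lemma:flux}) is likewise $L^2_\tau L^2_x$; neither yields $G\in L^1_\tau$, and a naive Cauchy--Schwarz in $\tau$ with weight $(R+\tau)^{\omega}$ would force an extra $\langle u_-\rangle^{\omega}$ onto the pressure estimate that is not available.

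The paper bypasses this by applying Cauchy--Schwarz \emph{along each characteristic line} $l_-$, in the variable $u_+$ (equivalently in $\tau$, since $|du_+/d\tau|$ is bounded below along $l_-$), \emph{before} taking the $L^2$ norm over $(x_1^-,x_2^-,u_-)$: for fixed $(x_1^-,x_2^-,u_-)$,
\[
\Bigl|\int_0^\infty h\,d\tau\Bigr|^2 \;\leqslant\; \Bigl(\int_{\mathbb R}\langle u_+\rangle^{-\omega}\,du_+\Bigr)\int_0^\infty\langle u_+\rangle^{\omega}|h|^2\,d\tau,
\]
and only then integrates in $\langle u_-\rangle^{2\omega}d\mu_-$. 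This produces exactly the space-time integral $\int\!\int\langle u_-\rangle^{2\omega}\langle u_+\rangle^{\omega}|h|^2$ that Lemma~\ref{lemma:pressureL2L2-all} controls. In the paper these quantities are $\mathbf{P_2}$ (for $|\beta|=0$) and $\mathbf{Q_2}$ (for $1\leqslant|\beta|\leqslant N_*+1$) in the proof of Proposition~\ref{prop:Sobolev}; the proof of Theorem~\ref{thm:deviation} in Section~\ref{differential} simply quotes $\mathbf{P_2}\lesssim\varepsilon^4$ and $\mathbf{Q_2}\lesssim\varepsilon^6$.

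A secondary point: commuting $\nabla^\alpha_{(x_1^-,x_2^-,u_-)}$ with the $\tau$-integral is not as innocent as you state. At the top order $|\alpha|=N_*+1$ the paper justifies it only in the $L^2$ sense (Claim~\ref{lemma:claim}, proved via Lemmas~\ref{lemma:claim-proof1}--\ref{lemma:claim-proof2}), and the resulting expression $\mathbf{R}_\mp^{(\beta)}$ carries Jacobian factors $\nabla^l\bigl(\mathrm{I}+\int_0^\infty\nabla z_\mp\,d\tau\bigr)$ from the chain rule that must themselves be estimated.
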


We point out that we have recovered all the derivatives of scattering fields at infinities in these two theorems, which close the corresponding possible gaps existing in \cite{He-Xu-Yu,Li-Yu}. In particular, Theorem \ref{thm:deviation} also provides a careful description for the asymptotic behavior of Alfv\'en waves at infinities. 

\medskip

\subsection{Scattering isomorphisms and Inverse scattering}
Let us identify $\Sigma_0$ with infinities by the coordinates $(x_1,x_2,x_3)\mapsto (x_1^\mp,x_2^\mp,u_\mp)$ where $u_\mp=x_3^\mp$, and consider the scattering fields associated to the initial data  
$\big(z_+(0,x_1^-,x_2^-,u_-),z_-(0,x_1^+,x_2^+,u_+)\big)$. 

\smallskip
At the moment, we start to pursue the scattering operators by collecting information of Alfv\'en waves (both $z_+$ and $z_-$ are required) from two adjacent infinities ($\mathcal{F}_+\cup \mathcal{F}_-$ or $\mathcal{F}_-\cup \mathcal{P}_+$ or $\mathcal{P}_+\cup \mathcal{P}_-$ or $\mathcal{P}_-\cup \mathcal{F}_+$).  
For completeness and clarity in the presentation, we make four sets of  definitions $(a)-(d)$ according to these four possibilities. In each set, we use $\mathcal{L}^{(\#)}$ to denote the scattering operator in the linear setting or the linear propagating operator (without loss of generality, consider $z_\pm$ as the solution to  $\partial_{t}z_{\pm}\mp B_0\cdot \nabla z_{\pm} =-\nabla p$ or $\partial_{t}z_{\pm}\mp B_0\cdot \nabla z_{\pm} \approx 0$, i.e.  $z_\pm$ propagate along the straight vector fields $\partial_t\mp B_0$ with the traces as a linear version of \cite{Li-Yu}), while the notation $\mathcal{N}^{(\#)}$ represents the nonlinear scattering operator governed by \eqref{eq:MHD} (i.e.  $z_\pm$ propagate along the (curved) characteristic vector fields $L_\mp$ defined in \eqref{vector}). 

\smallskip
A more precise formulation is as follows: 
\smallskip
\begin{enumerate}[($a$)]
\item $H^{N_*+1}(\Sigma_0,\langle u_-\rangle^{2\omega}d\mu_-)\times H^{N_*+1}(\Sigma_0,\langle u_+\rangle^{2\omega}d\mu_+)\rightarrow H^{N_*+1}(\mathcal{F}_+,\langle u_-\rangle^{2\omega}d\mu_-) \times  H^{N_*+1}(\mathcal{F}_-,\langle u_+\rangle^{2\omega}d\mu_+),$
\begin{align*}
\ \ \mathcal{L}^{(a)}:\  \big(z_+(0,x_1^-,x_2^-,u_-),z_-(0,x_1^+,x_2^+,u_+)\big)&\mapsto \big(z_+(0,x_1^-,x_2^-,u_-),z_-(0,x_1^+,x_2^+,u_+)\big),\\
\ \ \mathcal{N}^{(a)}:\  \big(z_+(0,x_1^-,x_2^-,u_-),z_-(0,x_1^+,x_2^+,u_+)\big)&\mapsto \big(z_+(+\infty;x_1^-,x_2^-,u_-),z_-(+\infty;x_1^+,x_2^+,u_+)\big);
\end{align*}
\item $H^{N_*+1}(\Sigma_0,\langle u_-\rangle^{2\omega}d\mu_-)\times H^{N_*+1}(\Sigma_0,\langle u_+\rangle^{2\omega}d\mu_+)\rightarrow H^{N_*+1}(\mathcal{P}_+,\langle u_-\rangle^{2\omega}d\mu_-) \times  H^{N_*+1}(\mathcal{F}_-,\langle u_+\rangle^{2\omega}d\mu_+)$,
\begin{align*}
	\ \ \mathcal{L}^{(b)}:\  \big(z_+(0,x_1^-,x_2^-,u_-),z_-(0,x_1^+,x_2^+,u_+)\big)&\mapsto \big(z_+(0,x_1^-,x_2^-,u_-),z_-(0,x_1^+,x_2^+,u_+)\big),\\
	\ \ \mathcal{N}^{(b)}:\  \big(z_+(0,x_1^-,x_2^-,u_-),z_-(0,x_1^+,x_2^+,u_+)\big)&\mapsto \big(z_+(-\infty;x_1^-,x_2^-,u_-),z_-(+\infty;x_1^+,x_2^+,u_+)\big);
\end{align*}
\item $H^{N_*+1}(\Sigma_0,\langle u_-\rangle^{2\omega}d\mu_-)\times H^{N_*+1}(\Sigma_0,\langle u_+\rangle^{2\omega}d\mu_+)\rightarrow H^{N_*+1}(\mathcal{P}_+,\langle u_-\rangle^{2\omega}d\mu_-) \times  H^{N_*+1}(\mathcal{P}_-,\langle u_+\rangle^{2\omega}d\mu_+)$,
\begin{align*}
	\ \ \mathcal{L}^{(c)}:\  \big(z_+(0,x_1^-,x_2^-,u_-),z_-(0,x_1^+,x_2^+,u_+)\big)&\mapsto \big(z_+(0,x_1^-,x_2^-,u_-),z_-(0,x_1^+,x_2^+,u_+)\big),\\
	\ \ \mathcal{N}^{(c)}:\  \big(z_+(0,x_1^-,x_2^-,u_-),z_-(0,x_1^+,x_2^+,u_+)\big)&\mapsto \big(z_+(-\infty;x_1^-,x_2^-,u_-),z_-(-\infty;x_1^+,x_2^+,u_+)\big);
\end{align*}
\item $H^{N_*+1}(\Sigma_0,\langle u_-\rangle^{2\omega}d\mu_-)\times H^{N_*+1}(\Sigma_0,\langle u_+\rangle^{2\omega}d\mu_+)\rightarrow H^{N_*+1}(\mathcal{F}_+,\langle u_-\rangle^{2\omega}d\mu_-) \times  H^{N_*+1}(\mathcal{P}_-,\langle u_+\rangle^{2\omega}d\mu_+)$,
\begin{align*}
	\ \ \mathcal{L}^{(d)}:\  \big(z_+(0,x_1^-,x_2^-,u_-),z_-(0,x_1^+,x_2^+,u_+)\big)&\mapsto \big(z_+(0,x_1^-,x_2^-,u_-),z_-(0,x_1^+,x_2^+,u_+)\big),\\
	\ \ \mathcal{N}^{(d)}:\  \big(z_+(0,x_1^-,x_2^-,u_-),z_-(0,x_1^+,x_2^+,u_+)\big)&\mapsto \big(z_+(+\infty;x_1^-,x_2^-,u_-),z_-(-\infty;x_1^+,x_2^+,u_+)\big).
\end{align*}
\end{enumerate}

\medskip

With the above motivations in mind, our ultimate conjecture \eqref{conjecture} begins to be explicitly identified by the deviation result in Theorem \ref{thm:deviation}.
We are now ready to state the main inverse scattering result. 

\begin{theorem}[Inverse scattering theorem for Alfv\'en waves]\label{thm:inversetheorem}
Let the initial data of \eqref{eq:MHD} satisfy \[\|z_{\pm}(0,x_1^\mp,x_2^\mp,u_\mp)\|_{H^{N_*+1}(\Sigma_0,\langle u_\mp\rangle^{2\omega}d\mu_\mp)} \leqslant \varepsilon,\] where $N_*\geqslant 5$ and $0<\varepsilon\leqslant\varepsilon_0$ with $\varepsilon_0$ determined in Theorem \ref{thm:global existence}. It follows that 
\begin{align*}
d \,\mathcal{N}^{(\#)}	\big|_{\mathbf{0}}=\mathcal{L}^{(\#)},\ \ \text{where }  \#=a,b,c,d,
\end{align*}
that is, the differential (or the linearization) of $\mathcal{N}^{(\#)}$ at $\mathbf{0} \in H^{N_*+1}(\Sigma_0,\langle u_-\rangle^{2\omega}d\mu_-)\times H^{N_*+1}(\Sigma_0,\langle u_+\rangle^{2\omega}d\mu_+)$ is equal to $\mathcal{L}^{(\#)}$ with $(\#)$ ranging over all situations $(a)$-$(d)$. Therefore, $\mathcal{N}^{(*)}$ is a local diffeomorphism at $\mathbf{0} \in H^{N_*+1}(\Sigma_0,\langle u_-\rangle^{2\omega}d\mu_-)\times H^{N_*+1}(\Sigma_0,\langle u_+\rangle^{2\omega}d\mu_+)$.

\end{theorem}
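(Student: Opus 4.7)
The plan is to identify $\mathcal{L}^{(\#)}$ as an obvious linear isomorphism, invoke Theorem \ref{thm:deviation} to see that $\mathcal{N}^{(\#)}$ differs from $\mathcal{L}^{(\#)}$ only at quadratic order, and then close the loop with a contraction/inverse-function-theorem argument. Write
\[
X := H^{N_*+1}(\Sigma_0,\langle u_-\rangle^{2\omega}d\mu_-)\times H^{N_*+1}(\Sigma_0,\langle u_+\rangle^{2\omega}d\mu_+)
\]
for the domain space, and $Y_\#$ for the corresponding target space of case $(\#)$. Under the coordinate identifications $(x_1,x_2,x_3)\mapsto(x_1^\mp,x_2^\mp,u_\mp)$ built into the very definition of the weighted Sobolev norms, each $\mathcal{L}^{(\#)}$ is literally the identity map from $X$ onto $Y_\#$, so it is a bounded linear isomorphism with bounded inverse. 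In particular, the first claim $d\mathcal{N}^{(\#)}|_{\mathbf{0}}=\mathcal{L}^{(\#)}$ reduces to verifying a quadratic remainder estimate at the origin.

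For that estimate, observe first that $\mathcal{N}^{(\#)}(\mathbf{0})=\mathbf{0}$, since zero initial data produces the zero global solution (hence zero scattering fields) by uniqueness in Theorem \ref{thm:global existence}. For an arbitrary $\phi\in X$ with $\|\phi\|_X\leqslant\varepsilon\leqslant\varepsilon_0$, Theorems \ref{thm:global existence}--\ref{thm:weightSobolev} produce $\mathcal{N}^{(\#)}(\phi)\in Y_\#$, and Theorem \ref{thm:deviation} yields
\[
\big\|\mathcal{N}^{(\#)}(\phi)-\mathcal{L}^{(\#)}(\phi)\big\|_{Y_\#}=O(\varepsilon^2)=O\big(\|\phi\|_X^{\,2}\big).
\]
This is precisely the Fréchet differentiability of $\mathcal{N}^{(\#)}$ at $\mathbf{0}$ with derivative $\mathcal{L}^{(\#)}$, and it holds uniformly in $\#\in\{a,b,c,d\}$ since Theorem \ref{thm:deviation} covers all four infinities.

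To upgrade pointwise differentiability at $\mathbf{0}$ to a genuine local diffeomorphism, I would establish a two-solution analogue of Theorem \ref{thm:deviation}: for initial data $\phi_1,\phi_2\in X$ with $\|\phi_i\|_X\leqslant\varepsilon$,
\[
\big\|(\mathcal{N}^{(\#)}-\mathcal{L}^{(\#)})(\phi_1)-(\mathcal{N}^{(\#)}-\mathcal{L}^{(\#)})(\phi_2)\big\|_{Y_\#}\leqslant C\varepsilon\,\|\phi_1-\phi_2\|_X.
\]
Granted such a Lipschitz-in-data estimate with a small prefactor $C\varepsilon$, the map
\[
T_\psi(\phi):=(\mathcal{L}^{(\#)})^{-1}\psi-(\mathcal{L}^{(\#)})^{-1}\big(\mathcal{N}^{(\#)}(\phi)-\mathcal{L}^{(\#)}(\phi)\big)
\]
is a strict contraction from a small closed ball of $X$ into itself for every $\psi$ in a sufficiently small ball of $Y_\#$. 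Banach's fixed-point theorem then produces a continuous local inverse $(\mathcal{N}^{(\#)})^{-1}$ of $\mathcal{N}^{(\#)}$ in a neighborhood of $\mathbf{0}$, which is the asserted local diffeomorphism. The four geometric configurations $(a)$--$(d)$ are treated in parallel since only the characteristic infinity on which the scattering field is read off changes.

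The main obstacle will be the two-solution estimate. The difference quantities $w_\pm:=z_\pm^{(1)}-z_\pm^{(2)}$ satisfy quasilinear transport equations along the (distinct) characteristic vector fields $L_\mp^{(1)}$ and $L_\mp^{(2)}$ with right-hand sides $-\nabla(p^{(1)}-p^{(2)})$ plus cross terms of the schematic form $(z_\mp^{(1)}-z_\mp^{(2)})\cdot\nabla z_\pm^{(2)}$, and the difference pressure solves an elliptic problem whose source again couples first derivatives of the two solutions. One must re-run the weighted energy and null-form analysis of Section \ref{sec:preliminary} for this difference system, propagating the $\langle u_\mp\rangle^{2\omega}$ weights along both characteristic foliations while carefully handling the mismatch between the foliations associated to $\phi_1$ and $\phi_2$ (for which one compares along, say, the foliation of $\phi_2$ and treats the shift of the foliation of $\phi_1$ as an error term controlled by $\|\phi_1-\phi_2\|_X$). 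Once this estimate is secured, the inversion is a soft functional-analytic computation and the theorem follows.
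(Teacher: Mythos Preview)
Your computation of the differential $d\mathcal{N}^{(\#)}|_{\mathbf{0}}=\mathcal{L}^{(\#)}$ matches the paper's proof exactly: both recognize $\mathcal{L}^{(\#)}$ as the identity under the coordinate identification, invoke Theorem~\ref{thm:deviation} to obtain the $O(\varepsilon^2)$ remainder, and conclude Fr\'echet differentiability at the origin.

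Where you diverge is in the passage to the local diffeomorphism. The paper simply writes ``according to the Inverse Function Theorem, $\mathcal{N}^{(\#)}$ is indeed a local diffeomorphism at $\mathbf{0}$'' immediately after establishing the differential at the single point $\mathbf{0}$, without further justification. You correctly observe that the standard Banach-space inverse function theorem requires $C^1$ regularity in a neighborhood (or at least a uniform Lipschitz bound on the remainder), not merely differentiability at one point, and you propose to supply this via a two-solution estimate
\[
\big\|(\mathcal{N}^{(\#)}-\mathcal{L}^{(\#)})(\phi_1)-(\mathcal{N}^{(\#)}-\mathcal{L}^{(\#)})(\phi_2)\big\|_{Y_\#}\leqslant C\varepsilon\,\|\phi_1-\phi_2\|_X
\]
followed by a direct contraction argument. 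This is more rigorous than what the paper actually does; the paper's invocation of the IFT is, strictly speaking, a gap that your outline would fill. Your sketch of the difference-system analysis (transport equations for $w_\pm$, difference pressure, mismatch of characteristic foliations) is the natural route, though you leave it as a programme rather than a proof. In short: your proposal is the paper's argument plus an honest accounting of what the IFT hypothesis actually demands.
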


\smallskip

This inverse scattering result explores the relationship between the nonlinear scattering theory and  the linear propagating theory of \eqref{eq:MHD}, and eventually leads to the relationship between the scattering fields and the initial data of Alfv\'en waves. It follows that the initial data of Alfv\'en waves and even the Alfv\'en waves themselves can be uniquely determined  by their scattering fields on infinities. They indeed share the same dynamical wave 
behavior. 
As a consequence, 
our ultimate conjecture \eqref{conjecture} on the inverse scattering problem of three dimensional Alfv\'en waves can be addressed.  
In our view, the main value of our argument lies in the answer to this conjecture and its accounting for the physical interpretation that one can reconstruct Alfv\'en waves emanating from the plasma from knowledge of their detecting waves received
and measured by far-away observers. 

\medskip

Our task of the rest paper has thus been reduced to prove the five theorems above, and we will complete this task in next sections. 
The approach combines and further develops the  important ideas from \cite{He-Xu-Yu,Li-Yu,Li-2021}:  the MHD equations in strong magnetic backgrounds contain nonlinear terms with null structure; for that reason, we are able to synthesize the idea of wave equations and the weighted energy estimates.
A crucial idea to always keep in mind is to  translate the point-wise properties of the scattering fields at infinities to the weighted energy conditions for solutions at a large finite time.

\bigskip

\section{Preparing the bootstrap and Pressure estimates}\label{sec:preliminary}
Let us devote this section to making abundant preparations for the energy method and necessary estimates. 
To keep our discussion as simple as possible, we shall proceed to study the issue concerning global solutions by merely taking the future time (i.e. $t\geqslant 0$) into account. 
We shall remark in passing that this simplification is based on the symmetry of time and will be adopted throughout the rest of this paper.

\smallskip

\subsection{Bootstrap ansatz}

We first fix a positive integer $N_* \geqslant 5$.
To use the method of continuity, we assume that there exists a $t^*>0$ such that the following holds:
\begin{enumerate}[\bf{Ansatz} 1]
\item (On the amplitude of fluctuation)
Initially we assume 
\begin{equation}\label{Bootstrap on fluctuation}
\|z_\pm\|_{L^\infty}\leqslant \frac{1}{2}.
\end{equation}

\item
 (On the underlying geometry) There exists a universal constant $C_0$ such that 
\begin{equation}\label{Bootstrap on geometry}
	\Big|\Big(\frac{\partial x_i^\pm}{\partial x_j}\Big)-\mathrm{I}\Big|\leqslant 2 C_0 \varepsilon,\ \ \ \ 
	 \Big|\nabla\Big(\frac{\partial x_i^\pm}{\partial x_j}\Big)\Big|\leqslant 2 C_0 \varepsilon,
\end{equation}
where $\mathrm{I}$ is the $3\times 3$ identity matrix.

\item
 (On the total energy bound) There exists a universal constant $C_1$ such that 
\begin{equation}\label{Bootstrap on energy}
	\sum_{+,-}\bigg(E_{\pm}+F_{\pm}+\sum_{k=0}^{N_{*}}E_{\pm}^k+\sum_{k=0}^{N_{*}}F_{\pm}^k\bigg)\leqslant 2\big(C_1\big)^2\varepsilon^2.
\end{equation}
\end{enumerate}
It should be noted that the latter two sets of bootstrap assumptions are legitimate. This is because \eqref{Bootstrap on geometry}-\eqref{Bootstrap on energy} hold for the initial data, which further allows their correctness for at least a short time interval $[0,t^*]$. 

\medskip
To close the continuity argument, we need to show that there exists a universal constant $\varepsilon_0\in(0,1)$ such that for all constants $0<\varepsilon\leqslant \varepsilon_0$, the constant $\tfrac{1}{2}$ in \eqref{Bootstrap on fluctuation} can be improved to its half $\frac{1}{4}$ and the constant $2$ in \eqref{Bootstrap on geometry}-\eqref{Bootstrap on energy} all can be improved to its half $1$, i.e. under assumptions \eqref{Bootstrap on fluctuation}-\eqref{Bootstrap on energy}, better bounds can be obtained:
\begin{enumerate}[\bf{Goal} 1]
	\item (On the amplitude of fluctuation) Improving the amplitude ansatz \eqref{Bootstrap on fluctuation} to 
	\begin{equation}\label{Bootstrap on fluctuation improved}
		\|z_\pm\|_{L^\infty}\leqslant \frac{1}{4}.
	\end{equation}
	\item  (On the underlying geometry) 
	Improving the geometry ansatz \eqref{Bootstrap on geometry} to 
	\begin{equation}\label{Bootstrap on geometry improved}
		\Big|\Big(\frac{\partial x_i^\pm}{\partial x_j}\Big)-\text{I}\Big|\leqslant  C_0 \varepsilon,\ \ \ \  \Big|\nabla\Big(\frac{\partial x_i^\pm}{\partial x_j}\Big)\Big|\leqslant  C_0 \varepsilon.
	\end{equation}
	
	\item  (On the total energy bound) 
	Improving the energy ansatz \eqref{Bootstrap on energy} to 
	\begin{equation}\label{Bootstrap on energy improved}
		\sum_{+,-}\bigg(E_{\pm}+F_{\pm}+\sum_{k=0}^{N_{*}}E_{\pm}^k+\sum_{k=0}^{N_{*}}F_{\pm}^k\bigg)\leqslant \big(C_1\big)^2\varepsilon^2.
	\end{equation}
\end{enumerate} 

\smallskip
We emphasize that the aforementioned constants $C_0$, $C_1$ and $\varepsilon_0$ are independent of the lifespan $[0,t^*]$. Therefore, the assumptions \eqref{Bootstrap on geometry}-\eqref{Bootstrap on energy} will never be saturated so that we can continue $t^*$ to $+\infty$. To put it in another way, the global existence of solutions to \eqref{eq:MHD} will follow from the method of continuity once we accomplish the above three goals. 
Very nicely, the question only remains to prove \eqref{Bootstrap on fluctuation improved}-\eqref{Bootstrap on energy improved}  under assumptions \eqref{Bootstrap on fluctuation}-\eqref{Bootstrap on energy}.

\medskip

\subsection{Technical preliminaries}
This subsection is a collection of conventions and lemmas in subsequent considerations.

\begin{convention}
The notation $A\lesssim B$ means that there is a universal constant $C$ such that $A\leqslant CB$.
\end{convention}

Among the preliminary lemmas, the following weighted div-curl lemma features as a fundamental tool to control the gradient of vectors by their divergence and curl. 
We remark that the readers can consult Lemma 2.6 in \cite{He-Xu-Yu} and Lemma 2.3 in \cite{Li-Yu} for a divergence-free version of this lemma. 
\begin{lemma}[Weighted div-curl lemma]\label{lemma:divcurl}
	Let $\lambda(x)\geqslant 1$ 
	be a smooth positive function on $\mathbb{R}^3$ with the additional property $|\nabla\lambda|\lesssim \lambda$. For any smooth vector field $v(x)\in H^1(\mathbb{R}^3)$, we have 
	\begin{equation}\label{eq:d-c1}
		\big\|\sqrt\lambda\nabla v\big\|_{L^2(\mathbb{R}^3)}^2 \lesssim\big\|\sqrt\lambda\operatorname{div }v\big\|_{L^2(\mathbb{R}^3)}^2+ \big\|\sqrt\lambda\operatorname{curl }v\big\|_{L^2(\mathbb{R}^3)}^2+ \big\|\sqrt\lambda v\big\|_{L^2(\mathbb{R}^3)}^2,
	\end{equation}
	provided $\sqrt{\lambda}v\in L^2({\mathbb{R}^3})$ and $\sqrt{\lambda}\nabla v\in L^2({\mathbb{R}^3})$.
\end{lemma}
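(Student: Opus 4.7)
The plan is to reduce \eqref{eq:d-c1} to an algebraic identity coming from the Levi--Civita contraction
$|\operatorname{curl} v|^2 = |\nabla v|^2 - \partial_i v^j\,\partial_j v^i$,
and to control the mixed trace $\int_{\mathbb{R}^3} \lambda\,\partial_i v^j\,\partial_j v^i\,dx$ by integration by parts. Since $\partial_j\partial_i v^j=\partial_i(\operatorname{div} v)$, integrating by parts once to move $\partial_j$ off $v^i$ and then a second time to move $\partial_i$ off $\operatorname{div} v$ will convert this mixed term into $\int \lambda|\operatorname{div} v|^2\,dx$ plus two remainders, each carrying precisely one factor of $\nabla \lambda$. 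I expect the resulting identity to take the form
\begin{equation*}
\int_{\mathbb{R}^3} \lambda|\nabla v|^2\,dx = \int_{\mathbb{R}^3} \lambda|\operatorname{div} v|^2\,dx + \int_{\mathbb{R}^3} \lambda|\operatorname{curl} v|^2\,dx + \int_{\mathbb{R}^3}(\partial_i\lambda)\,v^i\operatorname{div} v\,dx - \int_{\mathbb{R}^3}(\partial_j\lambda)\,\partial_i v^j\,v^i\,dx.
\end{equation*}

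The structural hypothesis $|\nabla\lambda|\lesssim\lambda$ then enters decisively: each remainder integrand is of the form $|\nabla\lambda|\cdot(\text{quadratic in }v,\nabla v)$, which, after writing $|\nabla\lambda|\leqslant\sqrt{\lambda}\cdot\bigl(|\nabla\lambda|/\sqrt{\lambda}\bigr)$ and using $|\nabla\lambda|/\sqrt{\lambda}\lesssim\sqrt{\lambda}$, can be bounded via Young's inequality with a small parameter $\eta>0$ by
$\eta\int\lambda|\nabla v|^2 + \eta\int\lambda|\operatorname{div} v|^2 + C_\eta\int\lambda|v|^2$. Choosing $\eta$ small enough allows the $\eta\int\lambda|\nabla v|^2$ contribution to be absorbed into the left-hand side, and what remains is exactly \eqref{eq:d-c1}.

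The only delicate point is justifying the global integrations by parts on $\mathbb{R}^3$: although $v$ is smooth and $H^1$, it need not be compactly supported, and the weight $\lambda$ is allowed to grow at infinity. My plan is a standard density argument, approximating $v$ in the weighted norm $\|\sqrt{\lambda}\,\cdot\,\|_{L^2(\mathbb{R}^3)}+\|\sqrt{\lambda}\,\nabla\cdot\,\|_{L^2(\mathbb{R}^3)}$ by $C_c^\infty(\mathbb{R}^3)$ vector fields --- the two integrability hypotheses listed at the end of the lemma statement are exactly what this approximation requires --- and then passing to the limit using $|\nabla\lambda|\lesssim\lambda$ to control the $\nabla\lambda$ error terms uniformly along the sequence. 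Aside from this bookkeeping, the argument is mechanical. The main obstacle I anticipate is the combinatorial one: keeping the right signs and coefficients in the double integration by parts, so that the $\nabla\lambda$ remainders always appear in the admissible pairings $\lambda|v||\operatorname{div} v|$ and $\lambda|\nabla v||v|$ rather than in some shape that could not be absorbed under $|\nabla\lambda|\lesssim\lambda$.
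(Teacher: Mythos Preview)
Your proposal is correct and follows essentially the same idea as the paper's proof: both exploit the algebraic relation linking $|\nabla v|^2$, $|\operatorname{div} v|^2$, and $|\operatorname{curl} v|^2$ up to a total-derivative term, integrate by parts against the weight $\lambda$, and absorb the $\nabla\lambda$ remainders via $|\nabla\lambda|\lesssim\lambda$ and Cauchy--Schwarz/Young. The only cosmetic difference is the starting point: the paper multiplies the vector identity $-\Delta v=-\nabla(\operatorname{div} v)+\operatorname{curl}\operatorname{curl} v$ by $\lambda v$ and integrates by parts once in each of the three resulting terms (producing three $\nabla\lambda$ remainders), whereas you start from the pointwise contraction $|\operatorname{curl} v|^2=|\nabla v|^2-\partial_i v^j\partial_j v^i$ and integrate by parts twice on the mixed term (producing two remainders). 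Your version is marginally tidier; the paper's is slightly more symmetric. Either way the proof closes identically.
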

\begin{proof}
Based on the vector calculus identity 
\begin{equation*}
	-\Delta v=-\nabla(\operatorname{div}v)+\operatorname{curl}\operatorname{curl}v,
\end{equation*}
we begin by multiplying both sides by $\lambda v$ and then integrating over $\mathbb{R}^3$ to derive
\begin{equation}\label{multiply}
	-\int_{\mathbb{R}^3}\lambda v\cdot \Delta vdx=-\int_{\mathbb{R}^3}\lambda v\cdot \nabla(\operatorname{div}v)dx+\int_{\mathbb{R}^3}\lambda v\cdot \operatorname{curl}\operatorname{curl}vdx.
\end{equation}
After integration by parts, these three terms in \eqref{multiply} can be written respectively as:
	\begin{align*}
		&-\int_{\mathbb{R}^3}\lambda v\cdot \Delta vdx		
		=\int_{\mathbb{R}^3} \lambda|\nabla v|^2dx+\int_{\mathbb{R}^3}\nabla \lambda\cdot v\cdot \nabla vdx,\\
		&-\int_{\mathbb{R}^3}\lambda v\cdot \nabla(\operatorname{div}v)dx
		=\int_{\mathbb{R}^3} \lambda|\operatorname{div} v|^2dx+\int_{\mathbb{R}^3}\nabla \lambda\cdot  v\cdot \operatorname{div} vdx,\\
		&\int_{\mathbb{R}^3}\lambda v\cdot \operatorname{curl}\operatorname{curl}vdx
		=-\int_{\mathbb{R}^3} \lambda|\operatorname{curl} v|^2dx-\int_{\mathbb{R}^3}(\nabla \lambda\wedge v)\cdot \operatorname{curl} vdx.
	\end{align*}
	Plugging them back into \eqref{multiply}, we get 
	\begin{align*}
		\int_{\mathbb{R}^3} \lambda|\nabla v|^2dx&\leqslant\int_{\mathbb{R}^3} \lambda|\operatorname{div} v|^2dx
		+\int_{\mathbb{R}^3} \lambda|\operatorname{curl} v|^2dx\\
		&\ \ \ \ +\int_{\mathbb{R}^3}|\nabla \lambda||v||\nabla v|dx
		 +\int_{\mathbb{R}^3}|\nabla \lambda| |v||\operatorname{div} v|dx+\int_{\mathbb{R}^3}|\nabla \lambda||v||\operatorname{curl} v|dx.
	\end{align*}
Using Cauchy-Schwarz inequality on the right hand side further yields
\[\int_{\mathbb{R}^3} \lambda|\nabla v|^2dx\leqslant\frac{3}{2}\int_{\mathbb{R}^3} \lambda|\operatorname{div} v|^2dx
+\frac{3}{2}\int_{\mathbb{R}^3} \lambda|\operatorname{curl} v|^2dx
+\frac{3}{2}\int_{\mathbb{R}^3}\frac{|\nabla\lambda|^2}{\lambda}|v|^2dx
+\frac{1}{2}\int_{\mathbb{R}^3}\lambda|\nabla v|^2dx.\]
Because of the fact that 
$|\nabla\lambda|\lesssim\lambda$ gives $\frac{|\nabla\lambda|}{\sqrt{\lambda}}\lesssim\sqrt{\lambda}$, we arrive at the conclusion that 
	\begin{equation*}
		\int_{\mathbb{R}^3} \lambda|\nabla v|^2dx\lesssim\int_{\mathbb{R}^3} \lambda|\operatorname{div} v|^2dx
		+\int_{\mathbb{R}^3} \lambda|\operatorname{curl} v|^2dx
		+\int_{\mathbb{R}^3}\lambda|v|^2dx.
	\end{equation*}
This proves the lemma. 
\end{proof}

\smallskip
As a quick corollary of Lemma \ref{lemma:divcurl}, we obtain:

\begin{corollary}\label{coro:divcurl}
	Let $\lambda(x)\geqslant 1$ 
	be a smooth positive function on $\mathbb{R}^3$ with the additional property $|\nabla\lambda|\lesssim \lambda$. For any smooth vector field $v(x)\in H^m(\mathbb{R}^3)$ $(m\in \mathbb{Z}_{\geqslant 1})$, we have 
	\begin{equation}\label{eq:d-c2}
		\big\|\sqrt\lambda\nabla^k v\big\|_{L^2(\mathbb{R}^3)}^2 \lesssim\sum_{l=0}^{k-1}\big\|\sqrt\lambda\operatorname{div }\nabla^lv\big\|_{L^2(\mathbb{R}^3)}^2+\sum_{l=0}^{k-1} \big\|\sqrt{\lambda}\operatorname{curl }\nabla^lv\big\|_{L^2(\mathbb{R}^3)}^2+ \big\|\sqrt\lambda v\big\|_{L^2(\mathbb{R}^3)}^2,
	\end{equation}
	provided $\sqrt{\lambda}v\in L^2({\mathbb{R}^3})$ and $\sqrt{\lambda}\nabla^k v\in L^2({\mathbb{R}^3})$, where $1\leqslant k\leqslant m$.
\end{corollary}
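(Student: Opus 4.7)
The plan is a straightforward induction on $k$, with Lemma \ref{lemma:divcurl} serving as both the base case and the engine of the induction step. Since the hypothesis $|\nabla\lambda|\lesssim\lambda$ is independent of any particular vector field, Lemma \ref{lemma:divcurl} remains applicable at every stage.

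For the base case $k=1$, the claimed inequality is exactly Lemma \ref{lemma:divcurl} applied to $v$. For the induction step, suppose the estimate holds at level $k-1$ (where $1\leqslant k-1\leqslant m-1$). Fix any multi-index $\alpha$ with $|\alpha|=k-1$, and apply Lemma \ref{lemma:divcurl} to the vector field $\nabla^\alpha v$ (permissible because the weight $\lambda$ and the hypothesis $|\nabla\lambda|\lesssim\lambda$ do not depend on $\alpha$). This yields
\[
\bigl\|\sqrt\lambda\,\nabla\nabla^\alpha v\bigr\|_{L^2(\mathbb{R}^3)}^2 \lesssim \bigl\|\sqrt\lambda\,\operatorname{div}\nabla^\alpha v\bigr\|_{L^2(\mathbb{R}^3)}^2 + \bigl\|\sqrt\lambda\,\operatorname{curl}\nabla^\alpha v\bigr\|_{L^2(\mathbb{R}^3)}^2 + \bigl\|\sqrt\lambda\,\nabla^\alpha v\bigr\|_{L^2(\mathbb{R}^3)}^2.
\]
Summing over all $|\alpha|=k-1$, and noting that the derivatives $\nabla\nabla^\alpha$ with $|\alpha|=k-1$ cover all $k$th-order derivatives (up to a universal combinatorial factor absorbed into $\lesssim$), produces
\[
\bigl\|\sqrt\lambda\,\nabla^k v\bigr\|_{L^2(\mathbb{R}^3)}^2 \lesssim \bigl\|\sqrt\lambda\,\operatorname{div}\nabla^{k-1} v\bigr\|_{L^2(\mathbb{R}^3)}^2 + \bigl\|\sqrt\lambda\,\operatorname{curl}\nabla^{k-1} v\bigr\|_{L^2(\mathbb{R}^3)}^2 + \bigl\|\sqrt\lambda\,\nabla^{k-1} v\bigr\|_{L^2(\mathbb{R}^3)}^2.
\]
The first two terms on the right-hand side are exactly the $l=k-1$ summands in the target estimate \eqref{eq:d-c2}. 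The remaining third term is then controlled by the inductive hypothesis at level $k-1$, which supplies precisely the sums over $l=0,\dots,k-2$ together with the final term $\|\sqrt\lambda\,v\|_{L^2}^2$. Combining these finishes the induction.

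There is no real obstacle; the proof is essentially bookkeeping on top of Lemma \ref{lemma:divcurl}. The only delicate points are interpretational: one must read $\operatorname{div}\nabla^l v$ and $\operatorname{curl}\nabla^l v$ as these operators applied componentwise to $\nabla^l v$ (viewed as a tuple of vector fields indexed by multi-indices of length $l$), and one should verify that the intermediate weighted regularity $\sqrt\lambda\,\nabla^j v\in L^2(\mathbb{R}^3)$ for $0\leqslant j\leqslant k-1$ is automatic under the assumptions $\sqrt\lambda\,v,\,\sqrt\lambda\,\nabla^k v\in L^2(\mathbb{R}^3)$—this can be ensured either by a standard mollification-and-truncation argument or by applying Lemma \ref{lemma:divcurl} iteratively downward from $k$.
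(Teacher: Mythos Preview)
Your proof is correct and follows essentially the same approach as the paper: apply Lemma \ref{lemma:divcurl} to $\nabla^{k-1}v$ to obtain the one-step inequality, then induct on $k$ to absorb the lower-order term $\|\sqrt\lambda\,\nabla^{k-1}v\|_{L^2}^2$. The paper's version is slightly terser but the argument is the same.
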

\begin{proof}
	For $1\leqslant k\leqslant m$, it is easy to see that 
	$\nabla^{k-1}v\in H^{m-k+1}(\mathbb{R}^3)\subset  H^1(\mathbb{R}^3)$ enables us to invoke \eqref{eq:d-c1}:
	\[\big\|\sqrt\lambda\nabla^k v\big\|_{L^2(\mathbb{R}^3)}^2 \lesssim\big\|\sqrt\lambda\operatorname{div }\nabla^{k-1}v\big\|_{L^2(\mathbb{R}^3)}^2+ \big\|\sqrt\lambda\operatorname{curl }\nabla^{k-1}v\big\|_{L^2(\mathbb{R}^3)}^2+ \big\|\sqrt\lambda\nabla^{k-1} v\big\|_{L^2(\mathbb{R}^3)}^2.\]
	By induction on $k$, we can infer 
	\eqref{eq:d-c2} immediately. 
\end{proof}

\begin{remark}\label{rmk:divcurl}
	In particular, if we take $v=z_\pm$ (this is a divergence free vector field) and $m=N_*$ in Corollary \ref{coro:divcurl}, then it holds for $1\leqslant k\leqslant N_*$ that 
		\begin{equation}\label{eq:z to j inequality}
		\big\|\sqrt\lambda\nabla z_\pm^{(k-1)}\big\|_{L^2(\mathbb{R}^3)}^2 \lesssim \big\|\sqrt\lambda
		z_\pm\big\|_{L^2(\mathbb{R}^3)}^2+\sum_{l=0}^{k-1} \big\|\sqrt\lambda
		j_\pm^{( l)}\big\|_{L^2(\mathbb{R}^3)}^2.
	\end{equation}
\end{remark}

\medskip

Throughout the rest of this paper, the weight function $\lambda$ will be constructed from $\langle u_\pm \rangle=(R^2+|u_\pm|^2)^{\frac{1}{2}}$ and their combining forms.  
We collect some technical observations on weight functions as follows:
\begin{lemma}[Bound of weight]\label{lemma:weights}
	For  $R\geqslant 100$ and $\omega=1+\delta$, we have the following inequalities:
	\begin{enumerate}[(i)]
		\item Concerning the weights $\lambda=\langle u_{\pm}\rangle^{\omega}$, $\frac{\langle u_{\pm}\rangle^{\omega}}{\langle u_{\mp}\rangle^{\frac{\omega}{2}}}$ and $\langle u_{\mp}\rangle^{\omega}\langle u_{\pm}\rangle^{\frac{\omega}{2}}$, for $l=1,2$, there holds
		\begin{equation}\label{differentiate weights coro}
			\begin{split}
				&|\nabla^l \langle u_\pm\rangle^\omega|\lesssim \langle u_\pm\rangle^\omega, \\
				&\Big|\nabla^l\Big(\frac{\langle u_{\pm}\rangle^{\omega}}{\langle u_{\mp}\rangle^{\frac{\omega}{2}}}\Big)\Big|\lesssim \frac{\langle u_{\pm}\rangle^{\omega}}{\langle u_{\mp}\rangle^{\frac{\omega}{2}}},\\
				&\big|\nabla^l\big(\langle u_{\pm}\rangle^{\omega}\langle u_{\mp}\rangle^{\frac{\omega}{2}}\big)\big|\lesssim \langle u_{\pm}\rangle^{\omega}\langle u_{\mp}\rangle^{\frac{\omega}{2}}.
			\end{split}
		\end{equation}
		
		\item Concerning the weight $\lambda=\langle u_{\mp}\rangle^{\omega}\langle u_{\pm}\rangle^{\frac{\omega}{2}}$, there holds 
\begin{equation}\label{eq:xgeqleq}
(\langle u_{\mp}\rangle^{\omega}\langle u_{\pm}\rangle^{\frac{\omega}{2}})(\tau,x)\lesssim\begin{cases}
(\langle u_{\mp}\rangle^{\omega}\langle u_{\pm}\rangle^{\frac{\omega}{2}})(\tau,x')+|x-x'|^{\frac{3\omega}{2}},&\ \ \ \ \text{if}\ \  |x-x'|\geqslant 1\\
(\langle u_{\mp}\rangle^{\omega}\langle u_{\pm}\rangle^{\frac{\omega}{2}})(\tau,x'),&\ \ \ \ \text{if}\ \  |x-x'|\leqslant 2.
\end{cases}
\end{equation}
We remark here that both the inequalities in \eqref{eq:xgeqleq} hold when $1\leqslant |x-x'|\leqslant 2$.
	\end{enumerate}	
\end{lemma}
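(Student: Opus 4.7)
Both parts follow from the bootstrap geometry assumption \eqref{Bootstrap on geometry} combined with two basic features of the weights: (a) the function $r\mapsto (R^2+r^2)^{1/2}$ is $1$-Lipschitz in $r$, and (b) $\langle u_\pm\rangle\geq R\geq 100$ pointwise. From \eqref{Bootstrap on geometry} together with $u_\pm=x_3^\pm$, I would first read off $|\nabla u_\pm|\lesssim 1$ and $|\nabla^2 u_\pm|\lesssim \varepsilon\lesssim 1$, so that $\langle u_\pm\rangle(\tau,\cdot)$ is Lipschitz in $x$ with constant of order one, and each higher derivative is controlled with an extra harmless smallness factor.

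Part (i) is a direct chain-rule exercise: differentiating $\langle u_\pm\rangle^\omega$ once or twice produces polynomial combinations of $\langle u_\pm\rangle^{\omega-2m}$, $u_\pm$, and derivatives of $u_\pm$, each of which is dominated by $\langle u_\pm\rangle^\omega$ thanks to (b) and the bound $|u_\pm|\leq \langle u_\pm\rangle$. The bounds for the quotient $\langle u_\pm\rangle^\omega/\langle u_\mp\rangle^{\omega/2}$ and product $\langle u_\pm\rangle^\omega \langle u_\mp\rangle^{\omega/2}$ then follow from the Leibniz/quotient rule applied to the already-verified single-factor estimates; no further input is needed.

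For Part (ii), the Lipschitz step gives $|\langle u_\pm\rangle(\tau,x)-\langle u_\pm\rangle(\tau,x')|\leq C|x-x'|$. When $|x-x'|\leq 2$ the additive error is an absolute constant and, using $\langle u_\pm\rangle\geq R$, one upgrades this to the multiplicative comparison $\langle u_\pm\rangle(\tau,x)\lesssim \langle u_\pm\rangle(\tau,x')$; multiplying the analogous comparisons for $\langle u_\mp\rangle^\omega$ and $\langle u_\pm\rangle^{\omega/2}$ produces the second case of \eqref{eq:xgeqleq}. When $|x-x'|\geq 1$, I would apply $(a+b)^p\lesssim a^p+b^p$ to get $\langle u_\mp\rangle^\omega(\tau,x)\lesssim \langle u_\mp\rangle^\omega(\tau,x')+|x-x'|^\omega$ and $\langle u_\pm\rangle^{\omega/2}(\tau,x)\lesssim \langle u_\pm\rangle^{\omega/2}(\tau,x')+|x-x'|^{\omega/2}$, and then take the product. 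The diagonal contributions recover the two required summands on the right-hand side of the first case of \eqref{eq:xgeqleq}; the off-diagonal cross terms such as $\langle u_\mp\rangle^\omega(\tau,x')|x-x'|^{\omega/2}$ are split by Young's inequality with conjugate exponents $3/2$ and $3$ (so that the distance power becomes exactly $3\omega/2$), and the residual single-weight factors are folded back into the leading term $\langle u_\mp\rangle^\omega \langle u_\pm\rangle^{\omega/2}(\tau,x')$ via the uniform lower bound $\langle u_\pm\rangle\geq R$.

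The main obstacle I anticipate is the cross-term analysis in the $|x-x'|\geq 1$ subcase: one must choose the Young exponents so that the distance power is precisely $3\omega/2$ while simultaneously ensuring that the residual weight powers are absorbed into $\langle u_\mp\rangle^\omega\langle u_\pm\rangle^{\omega/2}(\tau,x')$, and the role of $R\geq 100$ is precisely to provide the slack needed for these absorptions. Apart from that, everything is calculus bookkeeping; the smallness parameter $\varepsilon$ enters only through the uniform bound $|\nabla^2 u_\pm|\lesssim \varepsilon\lesssim 1$ and plays no quantitative role in the final estimates.
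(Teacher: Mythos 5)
Your treatment of part (i) and of the near-field case $|x-x'|\leqslant 2$ of part (ii) is correct and coincides with the paper's argument (chain rule plus the bootstrap bounds on $\nabla u_\pm$ and $\nabla^2 u_\pm$ for (i); the mean value theorem plus $\langle u_\pm\rangle\geqslant R$ for the near-field multiplicative comparison).

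The far-field case $|x-x'|\geqslant 1$ is where your argument breaks, and the obstacle you yourself flagged is fatal to the route you propose. Write $a=\langle u_\mp\rangle(\tau,x')$, $b=\langle u_\pm\rangle(\tau,x')$, $d=|x-x'|$. Your expansion produces the cross terms $a^{\omega}d^{\omega/2}$ and $d^{\omega}b^{\omega/2}$. Young's inequality with exponents $3/2$ and $3$ turns the first into $a^{3\omega/2}+d^{3\omega/2}$, and the residual $a^{3\omega/2}=\langle u_\mp\rangle^{3\omega/2}(\tau,x')$ can be folded into $a^{\omega}b^{\omega/2}=(\langle u_\mp\rangle^{\omega}\langle u_\pm\rangle^{\omega/2})(\tau,x')$ only if $\langle u_\mp\rangle^{\omega/2}\lesssim\langle u_\pm\rangle^{\omega/2}$ at $x'$; the uniform lower bound $\langle u_\pm\rangle\geqslant R$ provides no such comparison, and the two characteristic weights are genuinely independent of each other (by Lemma \ref{lemma:separation weight} they in fact separate like $t$). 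Symmetrically, the second cross term leaves $b^{3\omega/2}$, which would require $\langle u_\pm\rangle\lesssim\langle u_\mp\rangle$. No other choice of conjugate exponents rescues this: in the regime $a\gg d\gg b\sim R$ (say $a=M$, $d=\sqrt{M}$, $b=R$) the cross term $a^{\omega}d^{\omega/2}\sim M^{5\omega/4}$ exceeds both candidate majorants $a^{\omega}b^{\omega/2}\sim M^{\omega}$ and $d^{3\omega/2}\sim M^{3\omega/4}$ by factors that grow unboundedly with $M$, so the additive bound in the first line of \eqref{eq:xgeqleq} is not reachable by your decomposition. (For comparison, the paper's own proof asserts that this case ``follows immediately'' from the one-factor mean-value bound and never confronts the cross terms, so you are attempting to fill a step the paper leaves implicit.) What your computation does legitimately establish, using $a+d\leqslant a(1+d)$ and $b+d\leqslant b(1+d)$ (valid since $a,b\geqslant R\geqslant 1$), is the multiplicative comparison $(\langle u_{\mp}\rangle^{\omega}\langle u_{\pm}\rangle^{\frac{\omega}{2}})(\tau,x)\lesssim(\langle u_{\mp}\rangle^{\omega}\langle u_{\pm}\rangle^{\frac{\omega}{2}})(\tau,x')\,|x-x'|^{\frac{3\omega}{2}}$ for $|x-x'|\geqslant 1$; to salvage the far-field estimate you should either prove and propagate that version, or keep a weight factor evaluated at $x'$ attached to the $|x-x'|^{\frac{3\omega}{2}}$ summand instead of trying to make it weight-free.
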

\begin{proof}
For (i), differentiation on $\langle u_\pm\rangle$  gives the bound $|\nabla^l\langle u_\pm\rangle|\lesssim 1$ due to  \eqref{Bootstrap on geometry},  and then directly implies \eqref{differentiate weights coro}. 
For (ii), we first apply the mean value theorem on $\langle u_\pm\rangle$ to get 
\begin{equation*}
	\langle u_\pm\rangle(\tau,x) \leqslant \langle u_\pm\rangle(\tau,x')+\big\|\nabla\langle u_\pm\rangle\big\|_{L^\infty(\mathbb{R}^3)}|x-x'|\lesssim \langle u_\pm\rangle(\tau,x')+|x-x'|.
\end{equation*}
On one hand, for $|x-x'|\geqslant 1$, \eqref{eq:xgeqleq} follows immediately. 
On the other hand, for $|x-x'|\leqslant 2$, since $R\geqslant 100$, this yields 
\begin{equation*}
	\langle u_\pm\rangle(\tau,x) \lesssim (R^2+|u_\pm|^2)^{\frac{1}{2}}(\tau,x')+1\leqslant (2R^2+|u_\pm|^2)^{\frac{1}{2}}(\tau,x')
	\lesssim\langle u_\pm\rangle(\tau,x'),
\end{equation*}
and hence \eqref{eq:xgeqleq} is an immediate consequence. This completes the proof.
\end{proof}

\medskip

By virtue of Lemma \ref{lemma:divcurl} and Lemma \ref{lemma:weights}, to derive the following point-wise estimates from the standard Sobolev inequality and the energy ansatz \eqref{Bootstrap on energy} is  more or less standard. We omit its proof here and refer readers to Lemma 2.5 in \cite{Li-Yu}.
\begin{lemma}[Weighted Sobolev inequality]\label{lemma:Sobolev}
	For all $k\leqslant N_*-2$ and multi-indices $\alpha$ with $\left|\alpha\right|=k$, we have 
	\begin{equation}\label{eq:Sobolev}\begin{split}
			&\langle u_{\pm}\rangle^\omega|z_{\mp}|\lesssim (E_{\mp}+E^0_{\mp}+E^1_{\mp})^{\frac{1}{2}}\lesssim C_1\varepsilon,\\
			&\langle u_{\pm}\rangle^\omega|\nabla z_{\mp}^{(\alpha)}|\lesssim (E^k_{\mp}+E^{k+1}_{\mp}+E^{k+2}_{\mp})^{\frac{1}{2}}\lesssim C_1\varepsilon. 
	\end{split}\end{equation}
\end{lemma}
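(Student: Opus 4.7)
The plan is to combine the standard Sobolev embedding $H^2(\mathbb{R}^3) \hookrightarrow L^\infty(\mathbb{R}^3)$ with the weight-derivative bounds from Lemma \ref{lemma:weights}(i) in order to absorb the weight $\langle u_\pm\rangle^\omega$ inside the $H^2$-norm and match the resulting integrals with the energies $E_\mp$, $E^0_\mp$, $E^1_\mp$. Since every energy on the right already contains exactly the factor $\langle u_\mp\rangle^{2\omega}$, the real content is that the weight behaves like a constant under differentiation, so differentiating $\langle u_\pm\rangle^\omega z_\mp$ does not cost more than one derivative on $z_\mp$ (up to a constant).

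For the zeroth-order estimate I would introduce the auxiliary function $f := \langle u_\pm\rangle^\omega\, z_\mp$ on $\Sigma_t$ and apply the three-dimensional Sobolev embedding $\|f\|_{L^\infty(\mathbb{R}^3)}^2 \lesssim \|f\|_{H^2(\mathbb{R}^3)}^2$. Expanding the derivatives and using the $l=1,2$ cases of \eqref{differentiate weights coro}, one obtains pointwise
\[
|f|^2 + |\nabla f|^2 + |\nabla^2 f|^2 \lesssim \langle u_\pm\rangle^{2\omega}\bigl(|z_\mp|^2 + |\nabla z_\mp|^2 + |\nabla^2 z_\mp|^2\bigr).
\]
Integrating over $\Sigma_t$ identifies the right-hand side with $E_\mp(t) + E^0_\mp(t) + E^1_\mp(t)$, and then the bootstrap ansatz \eqref{Bootstrap on energy} controls everything by $(C_1\varepsilon)^2$. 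This yields the first inequality of \eqref{eq:Sobolev}.

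The higher-order estimate follows the same recipe applied to $g := \langle u_\pm\rangle^\omega\, \nabla z_\mp^{(\alpha)}$ for $|\alpha|=k$. The Sobolev embedding again produces the $H^2$-norm of $g$, which after Leibniz and \eqref{differentiate weights coro} is dominated by
\[
\int_{\Sigma_t} \langle u_\pm\rangle^{2\omega}\bigl(|\nabla z_\mp^{(\alpha)}|^2 + |\nabla^2 z_\mp^{(\alpha)}|^2 + |\nabla^3 z_\mp^{(\alpha)}|^2\bigr)\,dx,
\]
which matches $E^k_\mp(t)+E^{k+1}_\mp(t)+E^{k+2}_\mp(t)$. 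The condition $k\leqslant N_*-2$ is imposed precisely so that the top-order term $E^{k+2}_\mp$ is still controlled by the bootstrap assumption \eqref{Bootstrap on energy}, and then the whole quantity is again bounded by $(C_1\varepsilon)^2$.

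There is no substantial obstacle here; the only mildly delicate point is verifying that derivatives of the weight $\langle u_\pm\rangle^\omega$ do not worsen the count, which is exactly what \eqref{differentiate weights coro} guarantees via the geometry ansatz \eqref{Bootstrap on geometry}. Because this is a straightforward consequence of the results already established, I would omit the details in the text and simply refer to Lemma 2.5 in \cite{Li-Yu}, where the same argument is spelled out in full.
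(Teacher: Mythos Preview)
Your proposal is correct and follows exactly the route the paper indicates: standard Sobolev embedding $H^2(\mathbb{R}^3)\hookrightarrow L^\infty(\mathbb{R}^3)$ applied to the weighted quantity, Leibniz expansion controlled via \eqref{differentiate weights coro}, identification with the energies, and the bootstrap ansatz \eqref{Bootstrap on energy}; the paper itself omits the details and refers to Lemma~2.5 in \cite{Li-Yu}, just as you suggest. One cosmetic slip: in your second paragraph the weight carried by $E_\mp$ is $\langle u_\pm\rangle^{2\omega}$, not $\langle u_\mp\rangle^{2\omega}$, but your subsequent displayed bound has the sign right, so the argument is unaffected.
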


\medskip

Before proceeding further, we elaborate a little more on elementary properties of weight functions, especially the separation property. Let $\psi_{\pm}(t,y)=(\psi^1_{\pm}(t,y), \psi^2_{\pm}(t,y), \psi^3_{\pm}(t,y))$ (mapping from $\Sigma_0$ to $\Sigma_t$) be the flow generated by $Z_{\pm}$, i.e. for all $t\in \mathbb{R}$ and $y\in \mathbb{R}^3$:
\begin{equation}\label{flow definition}
	\frac{d}{dt}\psi_{\pm}(t,y)=Z_{\pm}(t,\psi_{\pm}(t,y)), \ \ \psi_{\pm}(0,y)=y.
\end{equation}
From now on, we consider $y$ as the initial label and $x$ as the present label when using the flow map. Since the fluctuations are given by $z_\pm = Z_\pm \mp B_0$, then integrating \eqref{flow definition} gives rise to
\begin{equation}\label{flow in integration}
	\psi_{\pm}(t,y)=y+\int_0^tZ_{\pm}(\tau,\psi_{\pm}(\tau,y))d\tau =y \pm t B_0+\int_0^tz_{\pm}(\tau,\psi_{\pm}(\tau,y))d\tau.
\end{equation}
\begin{lemma}[Separation property of weight]\label{lemma:separation weight}
	For the product of $\langle u_{+}\rangle$ and $\langle u_{-}\rangle$, there holds
	\begin{equation}\label{eq:separation weight}
		\langle u_{+}\rangle\langle u_{-}\rangle\gtrsim R+t.
	\end{equation} 
\end{lemma}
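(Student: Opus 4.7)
The plan is to exploit the fact that the two characteristic families travel in the $x_3$ direction at the opposite background speeds $\pm 1$ set by $B_0=(0,0,1)$, while the fluctuations $z_\pm$ are pointwise small by Bootstrap Ansatz 1. I will trace a fixed point $(t,x)$ (with $t\geqslant 0$, using the paper's time-symmetry simplification) back along both flows $\psi_\pm$ to its two distinct Lagrangian labels on $\Sigma_0$, then read off $u_+$ and $u_-$ at $(t,x)$ and show that the two values must differ by essentially $2t$.

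More precisely, Bootstrap Ansatz 2 guarantees that each flow map $\psi_\pm(t,\cdot)$ is a diffeomorphism of $\Sigma_0$ onto $\Sigma_t$, so I may define $y=\psi_+(t,\cdot)^{-1}(x)$ and $\tilde y=\psi_-(t,\cdot)^{-1}(x)$. Since $u_\pm$ are conserved along the integral curves of $L_\pm$ with initial value $x_3$ on $\Sigma_0$, the defining transport equations \eqref{definitionforu} give $u_+(t,x)=y_3$ and $u_-(t,x)=\tilde y_3$. Reading off the third component of \eqref{flow in integration} for each sign and subtracting then yields
\begin{equation*}
u_-(t,x) - u_+(t,x) \;=\; 2t \;+\; \int_0^t z_+^3(\tau,\psi_+(\tau,y))\,d\tau \;-\; \int_0^t z_-^3(\tau,\psi_-(\tau,\tilde y))\,d\tau.
\end{equation*}
The pointwise bound $\|z_\pm\|_{L^\infty}\leqslant 1/2$ of Bootstrap Ansatz 1 controls each of the two integrals by $t/2$ in absolute value, whence $|u_+(t,x)-u_-(t,x)|\geqslant t$ and in particular $\max(|u_+|,|u_-|)(t,x)\geqslant t/2$.

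Combining this with the trivial bound $\min(\langle u_+\rangle,\langle u_-\rangle)\geqslant R$ and the elementary inequality $(R^2+s^2)^{1/2}\geqslant (R+s)/\sqrt{2}$ for $s\geqslant 0$, one obtains $\langle u_+\rangle\langle u_-\rangle \gtrsim R(R+t)\gtrsim R+t$, since $R\geqslant 100$. The argument is essentially kinematic and I do not anticipate a real obstacle; the only subtlety worth flagging is that the inversion of $\psi_\pm(t,\cdot)$ must be justified as a global diffeomorphism of $\Sigma_0$ onto $\Sigma_t$, which is precisely what the near-identity bound on $(\partial x_i^\pm/\partial x_j)$ in Bootstrap Ansatz 2 supplies.
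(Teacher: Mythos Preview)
Your proof is correct and follows essentially the same approach as the paper: both trace the point $(t,x)$ back along each flow $\psi_\pm$ to read off $u_\pm(t,x)$ from the third component of \eqref{flow in integration}, use Ansatz~1 to bound the fluctuation integrals by $t/2$ each, and conclude $\max(|u_+|,|u_-|)\geqslant t/2$. Your remark on the invertibility of $\psi_\pm(t,\cdot)$ via Ansatz~2 is a welcome clarification that the paper leaves implicit.
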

\begin{proof}
	Thanks to \eqref{flow in integration}, the $x_3$-coordinate component of the flow $\psi_\pm$ is given by 
	\begin{equation*}
		\psi^3_{\pm}(t,y)=y_3 \pm t +\int_0^tz^3_{\pm}(\tau,\psi_{\pm}(\tau,y))d\tau.
	\end{equation*}
	Since $u_\pm=x_3^\pm$ and $x^\pm(t,\psi_\pm(t,y))=y$, we obtain 
	\begin{equation*}
		u_\pm(t,\psi_\pm(t,y))=x_3^\pm(t,\psi_\pm(t,y))=y_3=\psi_{\pm}^3(t,y)\mp t-\int_0^tz_{\pm}^3(\tau,\psi_{\pm}(\tau,y))d\tau.
	\end{equation*}
	As a result, we infer that 
	\begin{equation*}
		u_\pm(t,x)=x_3\mp t-\int_0^tz_{\pm}^3(\tau,\psi_{\pm}(\tau,\psi^{-1}_{\pm}(t,x)))d\tau.
	\end{equation*}
	Together with the amplitude ansatz \eqref{Bootstrap on fluctuation}, this leads to
	\begin{equation*}
		\big|(u_--u_+)-2t\big|\leqslant\int_0^t\big(\|z_+^3\|_{L^\infty}+\|z_-^3\|_{L^\infty}\big)d\tau\leqslant t,
	\end{equation*}
	which then yields the estimates $	t\leqslant |u_+-u_-|\leqslant 3t$,  $|u_+|+|u_-|\geqslant t$, and at least one of the inequalities $|u_+|\geqslant\frac{t}{2}$ and  $|u_-|\geqslant\frac{t}{2}$ hold. Thus we arrive at the estimate that 
	\begin{equation*}
		\langle u_{+}\rangle\langle u_{-}\rangle=(R^2+|u_+|^2)^{\frac{1}{2}}(R^2+|u_-|^2)^{\frac{1}{2}}
		\geqslant R(R^2+\tfrac{t^2}{4})^{\frac{1}{2}}\geqslant \tfrac{R}{2}(R^2+t^2)^{\frac{1}{2}},
	\end{equation*}
	which gives \eqref{eq:separation weight} immediately.
\end{proof}

\bigskip

As an application of Lemma \ref{lemma:Sobolev} and Lemma \ref{lemma:separation weight}, we can measure the separation of fluctuations $z_\pm$ in terms of decay in time. The proof is straightforward and so is omitted. 
\begin{lemma}[Separation estimate of fluctuation]\label{lemma:separation estimate}
For all $\alpha$ and $\beta$ with $|\alpha|,|\beta|\leqslant N_*-1$, we have 
\begin{equation}
\big|z_+^{(\alpha)}(t,x)z_-^{(\beta)}(t,x)\big|\lesssim\frac{\big(C_1\big)^2\varepsilon^2}{\langle u_{+}\rangle^\omega\langle u_{-}\rangle^\omega}\lesssim\frac{\big(C_1\big)^2\varepsilon^2}{(R+t)^\omega}.
\end{equation}
\end{lemma}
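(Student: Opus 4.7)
The plan is to derive this as a direct consequence of two earlier lemmas: the weighted Sobolev inequality (Lemma \ref{lemma:Sobolev}) and the separation property of the weight (Lemma \ref{lemma:separation weight}). The proof is essentially a two-line multiplication argument, once the right building blocks are in place.

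First, I would establish the pointwise decay estimate
\[
\langle u_-\rangle^\omega\big|z_+^{(\alpha)}(t,x)\big|\lesssim C_1\varepsilon
\]
for every multi-index $\alpha$ with $|\alpha|\leqslant N_*-1$. When $\alpha=0$ this is literally the first bound in \eqref{eq:Sobolev}. For $|\alpha|\geqslant 1$, I would rewrite $z_+^{(\alpha)}=\nabla z_+^{(\alpha')}$ with $|\alpha'|=|\alpha|-1\leqslant N_*-2$ and invoke the second bound in \eqref{eq:Sobolev}. The symmetric reasoning with the roles of $+$ and $-$ exchanged then produces $\langle u_+\rangle^\omega|z_-^{(\beta)}(t,x)|\lesssim C_1\varepsilon$ for all $|\beta|\leqslant N_*-1$. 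Multiplying the two pointwise bounds yields the first inequality
\[
\big|z_+^{(\alpha)}(t,x)z_-^{(\beta)}(t,x)\big|\lesssim\frac{\big(C_1\big)^2\varepsilon^2}{\langle u_+\rangle^\omega\langle u_-\rangle^\omega}.
\]
For the second inequality I would invoke Lemma \ref{lemma:separation weight}, which asserts $\langle u_+\rangle\langle u_-\rangle\gtrsim R+t$; raising this to the power $\omega>0$ gives $\langle u_+\rangle^\omega\langle u_-\rangle^\omega\gtrsim (R+t)^\omega$, so dividing produces the claimed $(R+t)^{-\omega}$ decay.

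There is no real obstacle here — all the analytic work has already been done in the preceding lemmas, and the final step is purely algebraic. The only conceptual point worth emphasizing is the crosswise pairing of weights: $z_+$ is controlled by the opposite weight $\langle u_-\rangle$ (since $z_+$ is transported along $L_-$ and thus adapted to the level sets $C^-_{u_-}$), and symmetrically $z_-$ is controlled by $\langle u_+\rangle$. It is precisely this null-structure-driven mismatch that, combined with the separation Lemma \ref{lemma:separation weight}, converts two stationary weighted bounds into the genuine time-decay rate $(R+t)^{-\omega}$ announced in the lemma.
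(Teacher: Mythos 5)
Your proof is correct and is precisely the argument the paper has in mind: the paper omits the proof but explicitly notes that the lemma is an application of the weighted Sobolev inequality (Lemma \ref{lemma:Sobolev}) together with the separation property of the weight (Lemma \ref{lemma:separation weight}), which is exactly the two-step decomposition you carry out. Your remark on the crosswise pairing of weights — $z_+$ paired with $\langle u_-\rangle$ and $z_-$ with $\langle u_+\rangle$ — correctly identifies the point where the null structure enters.
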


\bigskip

Observe that the flux is a robust tool to investigate the decay of waves.
We now use it to bound a set of spacetime integrals about $z_\pm$ and their derivatives, which will be useful for energy estimates.
\begin{lemma}[Spacetime estimate by using flux]\label{lemma:flux} For all $0\leqslant k\leqslant N_*$, we have
	\begin{equation}\label{eq:flux}
		\begin{split}
			&\int_{0}^{t^*}\int_{\Sigma_{\tau}}\frac{\langle u_{\mp}\rangle^{2\omega}}{\langle u_{\pm}\rangle^{\omega}}|z_{\pm}|^2\lesssim \big(C_1\big)^2\varepsilon^2,\\
			&\int_{0}^{t^*}\int_{\Sigma_{\tau}}\frac{\langle u_{\mp}\rangle^{2\omega}}{\langle u_{\pm}\rangle^{\omega}}\big|j^{(k)}_{\pm}\big|^2\lesssim \big(C_1\big)^2 \varepsilon^2,\\
			&\int_{0}^{t^*}\int_{\Sigma_{\tau}}\frac{\langle u_{\mp}\rangle^{2\omega}}{\langle u_{\pm}\rangle^{\omega}}\big|\nabla z^{(k)}_{\pm}\big|^2 \lesssim \big(C_1\big)^2\varepsilon^2.
		\end{split}
	\end{equation} 
\end{lemma}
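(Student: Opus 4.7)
The plan is to use the double characteristic foliation $\{C^+_{u_+}\}_{u_+ \in \mathbb{R}}$ (and symmetrically $\{C^-_{u_-}\}$) of the spacetime slab $[0, t^*] \times \mathbb{R}^3$ to convert each spacetime integral into a one-dimensional integral in the characteristic parameter $u_\pm$ of the flux through $C^{\pm, t^*}_{u_\pm}$. The decay factor $\langle u_\pm\rangle^{-\omega}$ with $\omega = 1+\delta > 1$ makes the $u_\pm$-integral convergent, and each flux integrand is controlled directly by the bootstrap norms $F_\pm, F_\pm^k \lesssim (C_1)^2 \varepsilon^2$ from \eqref{Bootstrap on energy}.

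For the first estimate, I would treat $u_+$ as a new coordinate in place of $x_3$ on the spacetime slab. Since $u_+ = x_3^+$ and $x_3^+\big|_{\Sigma_0} = x_3$, the Jacobian $\partial u_+/\partial x_3 = \partial x_3^+/\partial x_3 = 1 + O(\varepsilon)$ is uniformly comparable to $1$ by the geometry ansatz \eqref{Bootstrap on geometry}. After changing variables,
$$\int_0^{t^*}\int_{\Sigma_\tau}\frac{\langle u_-\rangle^{2\omega}}{\langle u_+\rangle^\omega}|z_+|^2 \lesssim \int_{\mathbb{R}}\frac{du_+}{\langle u_+\rangle^\omega}\int_{C^{+,t^*}_{u_+}}\langle u_-\rangle^{2\omega}|z_+|^2 \, d\sigma_+ \leq F_+ \int_{\mathbb{R}}\frac{du_+}{\langle u_+\rangle^\omega} \lesssim F_+ \lesssim (C_1)^2\varepsilon^2,$$
since $\omega > 1$ makes the remaining $u_+$-integral finite. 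The second estimate with $j_+^{(k)}$ is entirely analogous with $F_+^k$ in place of $F_+$, and the $z_-, j_-^{(k)}$ versions follow by exchanging the roles of $+$ and $-$.

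The third estimate reduces to the first two by applying the weighted div-curl inequality (Corollary \ref{coro:divcurl} together with Remark \ref{rmk:divcurl}) slice-by-slice in $\tau$, with weight $\lambda = \langle u_-\rangle^{2\omega}/\langle u_+\rangle^\omega$. Part (i) of Lemma \ref{lemma:weights} furnishes $|\nabla\lambda|\lesssim\lambda$, so
$$\int_{\Sigma_\tau}\frac{\langle u_-\rangle^{2\omega}}{\langle u_+\rangle^\omega}\big|\nabla z_+^{(k)}\big|^2 \lesssim \int_{\Sigma_\tau}\frac{\langle u_-\rangle^{2\omega}}{\langle u_+\rangle^\omega}|z_+|^2 + \sum_{l=0}^{k}\int_{\Sigma_\tau}\frac{\langle u_-\rangle^{2\omega}}{\langle u_+\rangle^\omega}\big|j_+^{(l)}\big|^2.$$
Integrating in $\tau$ over $[0, t^*]$ and invoking the first two estimates concludes the argument.

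The main obstacle is really the bookkeeping in the co-area step: one has to identify the Euclidean spacetime measure, after slicing in $u_+$, with the surface measure $d\sigma_+$ implicit in the definition of $F_+$, up to universal constants. This is where the geometry ansatz does the essential work, ensuring that $(x_1, x_2, x_3) \mapsto (x_1^+, x_2^+, u_+)$ is a near-identity diffeomorphism on each $\Sigma_\tau$ so that the two induced leaf measures are mutually equivalent. A minor technicality is that Corollary \ref{coro:divcurl} is stated for $\lambda \geq 1$ while our weight may become small; however, the proof there genuinely uses only $|\nabla\lambda|\lesssim\lambda$, so the inequality transfers to our weight without modification.
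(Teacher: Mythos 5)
Your proof is correct and follows essentially the same route as the paper: both convert the spacetime integral into a $u_\pm$-integral of the characteristic flux (with the geometry ansatz controlling the Jacobian between $dx\,d\tau$ and $d\sigma_\pm\,du_\pm$, which the paper makes explicit via $d\sigma_\pm=(\sqrt 2+O(\varepsilon))dx_h\,dt$), use $\omega>1$ for integrability in $u_\pm$, and reduce the third estimate to the first two by the weighted div-curl lemma applied slice-by-slice with $\sqrt\lambda=\langle u_\mp\rangle^\omega/\langle u_\pm\rangle^{\omega/2}$. Your observation that the hypothesis $\lambda\geqslant 1$ in Corollary \ref{coro:divcurl} is never actually used (only $|\nabla\lambda|\lesssim\lambda$ enters the proof) is a valid point worth flagging, since the weight $\langle u_\mp\rangle^{2\omega}/\langle u_\pm\rangle^\omega$ need not be bounded below by $1$.
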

\begin{proof}
	Denote $x_h=(x_1,x_2)$. If we parameterize $C_{u_\pm}^{\pm,t}$ by $(x_h,t)$, the surface measure $d\sigma_\pm$ can be written as 
	\[d\sigma_\pm=\sqrt{1+|\nabla_{t,x_h}u_\pm|^2}dx_hdt=\sqrt{1+|\partial_{t}u_\pm|^2+|\nabla_{x_h}u_\pm|^2}dx_hdt=\sqrt{1+|Z_\pm\cdot\nabla u_\pm|^2+|\nabla_{x_h}u_\pm|^2}dx_hdt,\]
	where we have used $\partial_tu_\pm + Z_\pm\cdot\nabla u_\pm=L_\pm u_\pm=0$. By \eqref{Bootstrap on fluctuation} and \eqref{Bootstrap on geometry}, for sufficiently small $\varepsilon$, there exists some constant $C$ such that  $|z_\pm| \leqslant C\varepsilon$ and $|\nabla u_\pm- e_3| \leqslant C\varepsilon$. It follows that  $(1-C\varepsilon)^2\leqslant|Z_\pm\cdot\nabla u_\pm|\leqslant (1+C\varepsilon)^2$ and $|\nabla_{x_h} u_\pm| \leqslant |\nabla u_\pm- e_3| \leqslant C\varepsilon$. Consequently, there exists some constant $C$ (may be different from the constant above) such that
	$$\sqrt{2}-C\varepsilon\leqslant\sqrt{1+|Z_\pm\cdot\nabla u_\pm|^2+|\nabla_{x_h}u_\pm|^2}\leqslant\sqrt{2}+C\varepsilon.$$
	Thus we can summarize that 
	\begin{equation}\label{surface measure}
		d\sigma_\pm=(\sqrt{2}+O(\varepsilon))d{x_h}dt.
	\end{equation}
	
	Using \eqref{Bootstrap on energy},  \eqref{surface measure} and the fact that $\displaystyle \int_{\mathbb{R}}\frac{1}{\langle u_{\pm}\rangle^\omega}du_{\pm}$ can be bounded by a universal constant, we are now ready to derive estimates for the spacetime integrals in \eqref{eq:flux}:
	\begin{align*}
		\int_{0}^{t^*}\int_{\Sigma_{\tau}}\frac{\langle u_{\mp}\rangle^{2\omega}}{\langle u_{\pm}\rangle^{\omega}}|z_{\pm}|^2dxd\tau
		&\lesssim\int_{u_{\pm}}\Big(\int_{C^{\pm,t^*}_{u_{\pm}}}\frac{\langle u_{\mp}\rangle^{2\omega}}{\langle u_{\pm}\rangle^{\omega}}|z_{\pm}|^2 d\sigma_{\pm}\Big)du_{\pm}\\
		&\lesssim \sup_{u_{\pm}\in \mathbb{R}}\Big[\int_{C^{\pm,t^*}_{u_{\pm}}}\langle u_{\mp}\rangle^{2\omega}|z_{\pm}|^2d\sigma_{\pm}\Big]\int_{\mathbb{R}}\frac{1}{\langle u_{\pm}\rangle^\omega}du_{\pm}\\
		&\lesssim F_{\pm}\lesssim \big(C_1\big)^2\varepsilon^2.
	\end{align*}
	The bound on $\displaystyle \int_{0}^{t^*}\int_{\Sigma_{\tau}}\frac{\langle u_{\mp}\rangle^{2\omega}}{\langle u_{\pm}\rangle^{\omega}}\big|j^{(k)}_{\pm}\big|^2dxd\tau$ can be obtained exactly in the same way. Due to \eqref{differentiate weights coro}, we know that the weight functions used above satisfy the conditions of the div-curl lemma (Lemma \ref{lemma:divcurl}, Corollary \ref{coro:divcurl} and especially Remark \ref{rmk:divcurl}) and hence we can invoke Remark \ref{rmk:divcurl}. Together with \eqref{eq:z to j inequality}, the previous two estimates yield
	\begin{align*}
		\int_{0}^{t^*}\int_{\Sigma_{\tau}}\frac{\langle u_{\mp}\rangle^{2\omega}}{\langle u_{\pm}\rangle^{\omega}}\big|\nabla z^{(k)}_{\pm}\big|^2dxd\tau
		&\lesssim
		\int_{0}^{t^*}\bigg(\Big\|\frac{\langle u_{\mp}\rangle^{\omega}}{\langle u_{\pm}\rangle^{\frac{\omega}{2}}}z_{\pm}\Big\|_{L^2(\mathbb{R}^3)}^2
		+\sum_{l=0}^k\Big\|\frac{\langle u_{\mp}\rangle^{\omega}}{\langle u_{\pm}\rangle^{\frac{\omega}{2}}}j^{(l)}_{\pm}\Big\|_{L^2(\mathbb{R}^3)}^2\bigg)d\tau\\
		&\lesssim \big(C_1\big)^2\varepsilon^2.
	\end{align*}
	We have thus proved the lemma.
\end{proof}

\smallskip

Let us end this subsection by recalling a classical energy estimate for the following linear system:
\begin{equation}\label{eq:linear eq}\begin{cases}
		& \partial_{t}f_{+}+Z_-\cdot \nabla f_{+}  =\rho_{+},\\
		& \partial_{t}f_{-}+Z_+\cdot \nabla f_{-}  =\rho_{-},
\end{cases}\end{equation}
where $f_\pm$ and $\rho_\pm$ are smooth vector fields defined on $\mathbb{R}\times \mathbb{R}^3$ with sufficiently fast decay in $x$-variables. We also refer the interested readers to  \cite{He-Xu-Yu,Li-Yu} for its proof.
\smallskip
\begin{lemma}[Linear energy estimate]\label{lemma:linear EE}
	For all weight functions $\lambda_{\pm}$ defined on $[0,t^*]\times\mathbb{R}^3$
	with the properties $L_{\pm}\lambda_{\mp}=0$, where $L_{\pm}=\partial_t+ Z_\pm\cdot\nabla$, for all $t\in [0,t^*]$, we have 
	\begin{equation}\label{eq:linear EE}
	\sum_{+,-}	\int_{\Sigma_{t}}\lambda_{\pm}|f_{\pm}|^2 dx +\sum_{+,-}\sup_{u_{\pm}\in\mathbb{R}}\int_{C_{u_{\pm}}^{\pm,t}}\lambda_{\pm}|f_{\pm}|^2 d\sigma_{\pm}\leqslant \sum_{+,-}\int_{\Sigma_{0}}\lambda_{\pm}|f_{\pm}|^2 dx +2\sum_{+,-}\int_{0}^{t}\int_{\Sigma_{\tau}}\lambda_{\pm}\big|f_{\pm}\big|\big|\rho_{\pm}\big|dxd\tau.
	\end{equation}
In particular, except for the coefficients of the first terms on the both sides of \eqref{eq:linear EE}, the exactly numerical constants are irrelevant to this paper.
\end{lemma}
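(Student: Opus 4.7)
The starting point is a pointwise energy identity for each transport equation in \eqref{eq:linear eq} combined with the weight condition $L_\pm\lambda_\mp=0$. Taking the Euclidean inner product of $L_-f_+=\rho_+$ with $2\lambda_+ f_+$ and using $L_-\lambda_+=0$, one finds
\begin{equation*}
L_-\bigl(\lambda_+|f_+|^2\bigr)=\lambda_+ L_-(|f_+|^2)+|f_+|^2\,L_-\lambda_+=2\lambda_+\,f_+\cdot\rho_+,
\end{equation*}
and symmetrically $L_+\bigl(\lambda_-|f_-|^2\bigr)=2\lambda_-\,f_-\cdot\rho_-$. Since $\operatorname{div}Z_\pm=0$ forces $\widetilde{\operatorname{div}}L_\pm=0$, these upgrade to pure spacetime divergence identities
\begin{equation*}
\widetilde{\operatorname{div}}\bigl(\lambda_\pm|f_\pm|^2\,L_\mp\bigr)=2\lambda_\pm\,f_\pm\cdot\rho_\pm
\end{equation*}
on $\mathbb{R}^{1+3}$. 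I will then integrate them over two kinds of spacetime regions, applying the standard Euclidean divergence theorem.

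The first region is the slab $[0,t]\times\mathbb{R}^3$. Because the time component of $L_\mp$ equals $1$ and the spatial decay of $f_\pm,\rho_\pm$ kills lateral contributions at infinity, the divergence theorem produces
\begin{equation*}
\int_{\Sigma_t}\lambda_\pm|f_\pm|^2\,dx=\int_{\Sigma_0}\lambda_\pm|f_\pm|^2\,dx+2\int_0^t\!\!\int_{\Sigma_\tau}\lambda_\pm\,f_\pm\cdot\rho_\pm\,dxd\tau,
\end{equation*}
which matches the first term on the left of \eqref{eq:linear EE} after estimating $|f_\pm\cdot\rho_\pm|\leqslant|f_\pm||\rho_\pm|$. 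The second region is, for each fixed $u_{+,0}\in\mathbb{R}$, the set $\Omega^+_{u_{+,0},t}=\{(\tau,x)\in[0,t]\times\mathbb{R}^3:u_+(\tau,x)\geqslant u_{+,0}\}$, whose spacetime boundary consists of subsets of $\Sigma_0$, $\Sigma_t$ and the characteristic hypersurface $C^{+,t}_{u_{+,0}}$. The boundary contribution on $C^{+,t}_{u_{+,0}}$ hinges on the pairing of $L_-$ with the outward conormal in the direction $-\nabla_{t,x}u_+$; using $L_+u_+=0$, I compute
\begin{equation*}
-L_-\cdot\nabla_{t,x}u_+=(Z_+-Z_-)\cdot\nabla u_+=\bigl(2B_0+z_+-z_-\bigr)\cdot\nabla u_+,
\end{equation*}
which is uniformly bounded below by a positive universal constant thanks to $B_0=(0,0,1)$ together with the bootstrap bounds \eqref{Bootstrap on fluctuation}--\eqref{Bootstrap on geometry} and the fact that $\nabla u_+$ is close to $e_3$. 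Hence this boundary term is a genuine positive flux proportional to $\int_{C^{+,t}_{u_{+,0}}}\lambda_+|f_+|^2\,d\sigma_+$, with the proportionality constant matching the $\sqrt{2}+O(\varepsilon)$ normalization of $d\sigma_\pm$ derived in Lemma~\ref{lemma:flux}; an analogous computation with the signs swapped treats the $f_-$ flux through $C^{-,t}_{u_{-,0}}$.

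Taking the supremum over $u_{+,0}$ (resp.\ $u_{-,0}$) in the second regional identity, bounding the piece $\int_{\Sigma_0\cap\{u_+\geqslant u_{+,0}\}}\lambda_+|f_+|^2$ by the full $\Sigma_0$ integral, and adding the $\pm$ versions of the two regional identities assembles \eqref{eq:linear EE} exactly. The main, and essentially only, technical obstacle is the positivity of $-L_-\cdot\nabla_{t,x}u_+$ on $C^{+,t}_{u_{+,0}}$ noted above: it is what promotes the characteristic surface integral to a bona fide energy flux with measure $d\sigma_+$, and it is precisely where the strong background magnetic field $B_0$ enters the argument. Since the statement of \eqref{eq:linear EE} explicitly disregards the exact numerical constant in front of the flux term, the small $O(\varepsilon)$ corrections inherited from \eqref{Bootstrap on fluctuation}--\eqref{Bootstrap on geometry} are harmless.
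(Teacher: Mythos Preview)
Your proposal is correct and follows the standard approach that the paper defers to its references \cite{He-Xu-Yu,Li-Yu}: derive the divergence identity $\widetilde{\operatorname{div}}(\lambda_\pm|f_\pm|^2 L_\mp)=2\lambda_\pm f_\pm\cdot\rho_\pm$ from $L_\pm\lambda_\mp=0$ and $\widetilde{\operatorname{div}}L_\mp=0$, then integrate over the slab and over the region bounded by $C^{\pm,t}_{u_\pm}$, using the bootstrap bounds to secure positivity of the characteristic flux. The only point worth making explicit is that in the second regional identity the $\Sigma_t$ boundary piece $\int_{\Sigma_t\cap\{u_+\geqslant u_{+,0}\}}\lambda_+|f_+|^2$ appears with a favorable sign and is simply discarded before taking the supremum, but this is implicit in your final assembly step.
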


\smallskip

\subsection{Estimates on the pressure term}
This subsection is where the pressure estimates are set-up. 

\smallskip

Due to $\operatorname{div} z_{\pm}=0$,  taking divergence of the first equation in \eqref{eq:MHD} gives 
\begin{equation*}
	-\Delta p=\partial_{i}z^{j}_{-}\partial_{j}z^{i}_{+}.
\end{equation*}
Using the Newtonian potential, we obtain the following expression for the pressure:
\begin{equation}\label{eq:pressure}
	p(\tau,x)=
	\frac{1}{4\pi} \int_{\mathbb{R}^3}\frac{1}{|x-x'|}\cdot\big(\partial_{i}z^{j}_{-}\partial_{j}z^{i}_{+}\big)(\tau,x')dx'.
\end{equation}

\smallskip

Before proceeding further, we first collect several some facts concerning integration of $\frac{1}{|x|^\gamma}$:
\begin{equation}\label{observe integration}\begin{split}
&\frac{1}{|x|^\gamma}\text{\Large$\text{\Large$\chi$}$}_{|x|\leqslant 1}\in L^1(\mathbb{R}^3)\iff \gamma<3,\\
&\frac{1}{|x|^\gamma}\text{\Large$\text{\Large$\chi$}$}_{|x|\geqslant 1}\in L^1(\mathbb{R}^3)\iff \gamma>3,\\
&\frac{1}{|x|^\gamma}\text{\Large$\text{\Large$\chi$}$}_{|x|\leqslant 1}\in L^2(\mathbb{R}^3)\iff \gamma<\frac{3}{2},\\
&\frac{1}{|x|^\gamma}\text{\Large$\text{\Large$\chi$}$}_{|x|\geqslant 1}\in L^2(\mathbb{R}^3)\iff \gamma>\frac{3}{2},
\end{split}
\end{equation}
where $\text{\Large$\text{\Large$\chi$}$}_S$ represents the characteristic function of the set $S$. 
Indeed, the facts in \eqref{observe integration} will help us avoid the non-integrable singularity $\frac{1}{|x-x'|}$ in pressure estimates and energy estimates.

\smallskip

We are now in a position to work on \eqref{eq:pressure} and derive bounds on pressure. 

\begin{lemma}\label{lemma:pressure}
Let $\theta(r)$ be a smooth cut-off function  so that $\theta(r)=1$ for $r\leqslant 1$ and $\theta(r)=0$ for $r\geqslant 2$.	
For $l=1,2,3$, for all $(\tau,x)\in \mathbb{R}\times \mathbb{R}^3$, there holds
	\begin{equation}\label{eq:pressure derivative}
		\nabla^l p(\tau,x)=\mathbf{A}_l+\mathbf{B}_l,
	\end{equation}
	where
	\begin{align*}
		&\mathbf{A}_l:=\frac{1}{4\pi}\int_{\mathbb{R}^3}\nabla_x\frac{1}{|x-x'|}\cdot\nabla_{x'}^{l-1}\Big(\theta(|x-x'|)\cdot\big(\partial_{i}z^{j}_{-}\partial_{j}z^{i}_{+}\big)(\tau,x')\Big)dx',\\
		&\mathbf{B}_l:=(-1)^{l-1}\frac{1}{4\pi}\int_{\mathbb{R}^3}\partial_{i}^{1-\lfloor\frac{l}{2}\rfloor}\partial_{j}^{1-\lfloor\frac{l-1}{2}\rfloor}\Big(\nabla^l_x\frac{1}{|x-x'|}\cdot \big(1-\theta(|x-x'|)\big)\Big)\cdot\big(\partial_{i}^{\lfloor\frac{l}{2}\rfloor}z^{j}_{-}\partial_{j}^{\lfloor\frac{l-1}{2}\rfloor}z^{i}_{+}\big)(\tau,x')dx'.
	\end{align*}
\end{lemma}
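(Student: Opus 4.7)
My plan is to differentiate the Newtonian-potential formula \eqref{eq:pressure} under the integral sign, and to tame the diagonal singularity of $\nabla_x^l \tfrac{1}{|x-x'|}$---which is non-integrable on $\mathbb{R}^3$ for $l\geqslant 2$---by splitting the convolution using the cutoff $\theta(|x-x'|)$ into a near-diagonal piece supported in $|x-x'|\leqslant 2$ and a far-field piece supported in $|x-x'|\geqslant 1$. These two pieces will produce $\mathbf{A}_l$ and $\mathbf{B}_l$ respectively. On each piece I then redistribute derivatives between the kernel and the source $N(x'):=(\partial_i z_-^j\,\partial_j z_+^i)(x')$ by means of the identity $\nabla_x \tfrac{1}{|x-x'|}=-\nabla_{x'}\tfrac{1}{|x-x'|}$ combined with integration by parts in $x'$.

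For the near-field piece $\mathbf{A}_l$, the aim is to leave exactly one $\nabla_x$-derivative on the kernel, reducing the singularity to $|x-x'|^{-2}$, which is locally integrable in three dimensions. I therefore convert the remaining $l-1$ derivatives from $\nabla_x$ to $(-\nabla_{x'})^{l-1}$ and then integrate by parts $l-1$ times against the localized integrand $\theta(|x-x'|)\,N(x')$. The two sign factors $(-1)^{l-1}$ (one from the $\nabla_x\to-\nabla_{x'}$ rule, one from the IBP) cancel, boundary terms at $|x-x'|=2$ vanish by smoothness of $\theta$, and the outcome is precisely $\mathbf{A}_l$.

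For the far-field piece $\mathbf{B}_l$, the factor $(1-\theta)$ already removes the diagonal singularity, so any number of derivatives may safely sit on the kernel. Here I exploit $\operatorname{div} z_\pm=0$ to rewrite the nonlinearity as $\partial_i z_-^j\,\partial_j z_+^i=\partial_{i'}\partial_{j'}(z_-^j z_+^i)$, and then integrate by parts in $x'$ a total of $2-\lfloor l/2\rfloor-\lfloor(l-1)/2\rfloor$ times---that is, twice when $l=1$, once when $l=2$, and not at all when $l=3$. Each IBP would a priori generate an unwanted product-rule term, but those extra terms always contain a factor of the form $\operatorname{div} z_\pm$ after summation and thus vanish identically. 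Converting each surviving $\partial_{x'}$ back to $-\partial_x$ on the kernel produces both the overall sign $(-1)^{l-1}$ and the floor-indexed combinatorics of derivatives displayed in $\mathbf{B}_l$.

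The main (rather mild) obstacle is simultaneous bookkeeping of signs, index contractions and derivative counts across the two IBPs; the rest is algebraic once the cutoff split is in place. As for regularity, differentiation under the integral at $l=1$ is standard since $|x-x'|^{-2}\in L^1_{\mathrm{loc}}(\mathbb{R}^3)$ and $z_\pm$ are rapidly decaying by Lemma \ref{lemma:Sobolev}; no boundary contributions arise at spatial infinity because $z_\pm$ decay under the bootstrap \eqref{Bootstrap on energy}, while $\nabla\theta$ is supported in the compact annulus $1\leqslant|x-x'|\leqslant 2$. Every intermediate integrand is absolutely convergent: in $\mathbf{A}_l$ at most one derivative sits on the kernel, and in $\mathbf{B}_l$ the factor $(1-\theta)$ pushes the kernel away from the diagonal.
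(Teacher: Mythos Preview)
Your proposal is correct and follows essentially the same approach as the paper: the cutoff split, the conversion $\nabla_x\tfrac{1}{|x-x'|}=-\nabla_{x'}\tfrac{1}{|x-x'|}$ with $(l-1)$-fold integration by parts for $\mathbf{A}_l$, and the $\operatorname{div} z_\pm=0$-driven integration by parts for $\mathbf{B}_l$ (twice, once, and zero times for $l=1,2,3$) are exactly what the paper does. The paper writes out $\mathbf{B}_1',\mathbf{B}_2',\mathbf{B}_3'$ case by case before unifying, while you phrase it via the double-divergence identity $\partial_i z_-^j\,\partial_j z_+^i=\partial_i\partial_j(z_-^j z_+^i)$ upfront, but this is purely a difference in exposition; note also that the sign $(-1)^{l-1}$ arises directly from the $3-l$ integrations by parts rather than from any subsequent conversion $\partial_{x'}\to -\partial_x$.
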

\begin{proof}
	According to \eqref{eq:pressure}, there holds the following decomposition on each time slice $\Sigma_{\tau}$:
	\begin{align*}
		\nabla^l p(\tau,x)	
		&=
		\frac{1}{4\pi}\int_{\mathbb{R}^3}\nabla^l_x\frac{1}{|x-x'|}\cdot\big(\partial_{i}z^{j}_{-}\partial_{j}z^{i}_{+}\big)(\tau,x')dx'\\
		&=\underbrace{\frac{1}{4\pi}\int_{\mathbb{R}^3}\nabla^l_x\frac{1}{|x-x'|}\cdot \theta(|x-x'|)\cdot \big(\partial_{i}z^{j}_{-}\partial_{j}z^{i}_{+}\big)(\tau,x')dx'}_{\mathbf{A}_l'}\\
		&\ \ \ \   +\underbrace{\frac{1}{4\pi}\int_{\mathbb{R}^3}\nabla^l_x\frac{1}{|x-x'|}\cdot \big(1-\theta(|x-x'|)\big)\cdot\big(\partial_{i}z^{j}_{-}\partial_{j}z^{i}_{+}\big)(\tau,x')dx'}_{\mathbf{B}_l'}.
	\end{align*}

	For $\mathbf{A}_l'$, based on the observation that
	\[\nabla_x\frac{1}{|x-x'|}=-\nabla_{x'}\frac{1}{|x-x'|},\]
	we acquire
	\begin{align*}
		\mathbf{A}_l'&=(-1)^{l-1}\frac{1}{4\pi}\int_{\mathbb{R}^3}\nabla^{l-1}_{x'}\nabla_x\frac{1}{|x-x'|}\cdot \theta(|x-x'|)\cdot \big(\partial_{i}z^{j}_{-}\partial_{j}z^{i}_{+}\big)(\tau,x')dx'\\
		&=\frac{1}{4\pi}\int_{\mathbb{R}^3}\nabla_x\frac{1}{|x-x'|}\cdot\nabla^{l-1}_{x'}\Big( \theta(|x-x'|)\cdot \big(\partial_{i}z^{j}_{-}\partial_{j}z^{i}_{+}\big)(\tau,x')\Big)dx'\\
		&=\mathbf{A}_l.
	\end{align*}
	
	For $\mathbf{B}_l'$, in order to avoid the non-integrable singularity $\frac{1}{|x-x'|}$ in the energy estimates, we invoke the facts in \eqref{observe integration} and use integration by parts to infer that 
	\begin{align*}
		\mathbf{B}_1'&=\frac{1}{4\pi}\int_{\mathbb{R}^3}\partial_{j}\partial_{i}\Big(\nabla_x\frac{1}{|x-x'|} \cdot\big(1-\theta(|x-x'|)\big)\Big)\cdot\big(z^{j}_{-}z^{i}_{+}\big)(\tau,x')dx',\\
		\mathbf{B}_2'&=
		-\frac{1}{4\pi}\int_{\mathbb{R}^3}\partial_{j}\Big(\nabla^2_x\frac{1}{|x-x'|}\cdot \big(1-\theta(|x-x'|)\big)\Big)\cdot\big(\partial_{i}z^{j}_{-}z^{i}_{+}\big)(\tau,x')dx',\\
		\mathbf{B}_3'&=\frac{1}{4\pi}\int_{\mathbb{R}^3}\nabla^3_x\frac{1}{|x-x'|}\cdot \big(1-\theta(|x-x'|)\big)\cdot\big(\partial_{i}z^{j}_{-}\partial_{j}z^{i}_{+}\big)(\tau,x')dx'.
	\end{align*}
Arranging these three cases in a unified form, we conclude that 
	\begin{align*}
		\mathbf{B}_l'&=(-1)^{l-1}\frac{1}{4\pi}\int_{\mathbb{R}^3}\partial_{i}^{1-\lfloor\frac{l}{2}\rfloor}\partial_{j}^{1-\lfloor\frac{l-1}{2}\rfloor}\Big(\nabla^l_x\frac{1}{|x-x'|}\cdot \big(1-\theta(|x-x'|)\big)\Big)\cdot\big(\partial_{i}^{\lfloor\frac{l}{2}\rfloor}z^{j}_{-}\partial_{j}^{\lfloor\frac{l-1}{2}\rfloor}z^{i}_{+}\big)(\tau,x')dx'\\
		&=\mathbf{B}_l.
	\end{align*}

Hence the decomposition \eqref{eq:pressure derivative} is proved. 
\end{proof}

\smallskip

Lemma \ref{lemma:pressure} enables us to derive the following two lemmas for the pressure up to three order derivatives, which concern their point-wise bounds and space-time estimates respectively.

\begin{lemma}\label{lemma:pressure derivative bound}
	For $l=1,2,3$, for all $(\tau,x)\in \mathbb{R}\times \mathbb{R}^3$, we have the following point-wise bounds on pressure:
	\begin{equation}\label{eq:pressure derivative bound}
		|\nabla^l p(\tau,x)|\lesssim\frac{\big(C_1\big)^2\varepsilon^2}{(R+\tau)^\omega}.
	\end{equation}
\end{lemma}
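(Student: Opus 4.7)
The plan is to exploit the decomposition $\nabla^l p = \mathbf{A}_l + \mathbf{B}_l$ established in Lemma \ref{lemma:pressure}, and to estimate each piece separately by combining the uniform separation estimate of Lemma \ref{lemma:separation estimate} for products $z_-^{(\alpha)} z_+^{(\beta)}$ with the integrability facts on $|x-x'|^{-\gamma}$ listed in \eqref{observe integration}. The basic idea is that, wherever a singular kernel appears, it is localized by $\theta(|x-x'|)$ to the short-distance region (where $|x-x'|^{-2}$ is integrable since $2<3$); and wherever the kernel is non-integrable at infinity, integration by parts in Lemma \ref{lemma:pressure} has replaced it by $\sim|x-x'|^{-4}$ times $1-\theta$, which is integrable on $\{|x-x'|\geqslant 1\}$ since $4>3$.

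For $\mathbf{A}_l$, I first expand $\nabla_{x'}^{l-1}\bigl(\theta(|x-x'|)(\partial_i z_-^j\partial_j z_+^i)(\tau,x')\bigr)$ by Leibniz. Each resulting term is a product of a bounded derivative of $\theta$ (supported in $\{|x-x'|\leqslant 2\}$) with a pair $\partial^{\alpha} z_-^j(\tau,x')\,\partial^{\beta} z_+^i(\tau,x')$ whose total derivative count is $|\alpha|+|\beta|\leqslant l+1$, so in particular $|\alpha|,|\beta|\leqslant l\leqslant 3\leqslant N_*-1$. Then Lemma \ref{lemma:separation estimate} (together with Lemma \ref{lemma:separation weight}) gives, uniformly in $x'$,
\[
\bigl|\partial^{\alpha} z_-^j\,\partial^{\beta} z_+^i\bigr|(\tau,x')\lesssim \frac{(C_1)^2\varepsilon^2}{\langle u_+\rangle^\omega\langle u_-\rangle^\omega}(\tau,x')\lesssim \frac{(C_1)^2\varepsilon^2}{(R+\tau)^\omega}.
\]
Pulling this factor out of the integral and using $\bigl|\nabla_x|x-x'|^{-1}\bigr|\lesssim|x-x'|^{-2}$, I am reduced to bounding $\int_{|x-x'|\leqslant 2}|x-x'|^{-2}\,dx'$, which is a universal constant by \eqref{observe integration}.

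For $\mathbf{B}_l$, I expand the outer derivatives $\partial_i^{1-\lfloor l/2\rfloor}\partial_j^{1-\lfloor(l-1)/2\rfloor}$ by Leibniz onto the pair $\nabla_x^l|x-x'|^{-1}$ and $1-\theta(|x-x'|)$. The total number of $x$-derivatives landing on $|x-x'|^{-1}$ equals $3$ in all three values of $l$, so the worst term decays like $|x-x'|^{-4}$ on $\{|x-x'|\geqslant 1\}$; any term where at least one derivative hits $1-\theta$ is supported in the annulus $\{1\leqslant|x-x'|\leqslant 2\}$ where the kernel is $O(1)$. The remaining $z_\pm$-factor $\partial^{\lfloor l/2\rfloor}z_-^j\,\partial^{\lfloor(l-1)/2\rfloor}z_+^i$ has derivative orders at most $1$, so Lemma \ref{lemma:separation estimate} again bounds it pointwise by $(C_1)^2\varepsilon^2/(R+\tau)^\omega$. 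Pulling this factor out and using that $|x-x'|^{-4}\text{\Large$\chi$}_{|x-x'|\geqslant 1}\in L^1(\mathbb{R}^3)$ by \eqref{observe integration} closes the estimate for $\mathbf{B}_l$.

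The main point demanding care is the bookkeeping of the Leibniz expansions: one must verify for every emerging term that (i) the derivative orders on $z_+$ and $z_-$ remain $\leqslant N_*-1$ so Lemma \ref{lemma:separation estimate} applies, and (ii) the singularity profile of the kernel matches the region (short-distance vs. long-distance) on which the corresponding cutoff is supported. Both conditions are arranged precisely by the asymmetric integration-by-parts pattern $\lfloor l/2\rfloor,\lfloor(l-1)/2\rfloor$ built into Lemma \ref{lemma:pressure}, so once the decomposition is in place the actual estimate is a clean application of separation plus integrability.
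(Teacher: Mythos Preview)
Your proposal is correct and follows essentially the same route as the paper: split $\nabla^l p=\mathbf{A}_l+\mathbf{B}_l$ via Lemma~\ref{lemma:pressure}, pull out the uniform $(C_1)^2\varepsilon^2/(R+\tau)^\omega$ decay of the $z_-\!\cdot z_+$ products, and conclude by the $L^1$-integrability of $|x-x'|^{-2}$ on $\{|x-x'|\leqslant 2\}$ and of $|x-x'|^{-4}$ on $\{|x-x'|\geqslant 1\}$. The only cosmetic difference is that you invoke the packaged separation estimate (Lemma~\ref{lemma:separation estimate}) directly, whereas the paper applies Lemma~\ref{lemma:Sobolev} and Lemma~\ref{lemma:separation weight} separately; the content is identical.
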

\begin{proof}
Thanks to \eqref{eq:pressure derivative}, we have
	\begin{equation}\label{eq:nablalp}
		|\nabla^l p(\tau,x)|\leqslant|\mathbf{A}_l|+|\mathbf{B}_l|,
	\end{equation}
	where 
	\begin{align*}
		&	|\mathbf{A}_l|
		\lesssim  \sum^l_{l_1,l_2=1}     \int_{|x-x'|\leqslant 2}\frac{|(\nabla^{l_1} z_{-}\cdot\nabla^{l_2} z_{+})(\tau,x')|}{|x-x'|^2}dx',\stepcounter{equation}\tag{\theequation}\label{eq:Al}\\
		&	|\mathbf{B}_l|\lesssim\int_{|x-x'|\geqslant 1}\frac{|(\nabla^{\lfloor\frac{l}{2}\rfloor}z_{-}\cdot\nabla^{\lfloor\frac{l-1}{2}\rfloor}  z_{+})(\tau,x')|}{|x-x'|^4}dx'.\stepcounter{equation}\tag{\theequation}\label{eq:Bl}
	\end{align*}

Together with \eqref{eq:separation weight}, \eqref{eq:Sobolev} and \eqref{observe integration}, the estimates \eqref{eq:Al} and \eqref{eq:Bl} lead us to 
	\begin{align*}
		\big(R+\tau\big)^{{\omega}}|\mathbf{A}_l|
		&\stackrel{\eqref{eq:separation weight}}{\lesssim}\sum^l_{l_1,l_2=1} 
		\int_{|x-x'|\leqslant 2}\frac{\big(\langle u_+\rangle^\omega\langle u_-\rangle^{ \omega}\big)(\tau,x')|(\nabla^{l_1} z_{-}\cdot\nabla^{l_2} z_{+})(\tau,x')|}{|x-x'|^2}dx'\\
		&\lesssim\sum^l_{l_1,l_2=1} \big\|\langle u_+\rangle^\omega\nabla^{l_1}z_-\big\|_{L^\infty(\mathbb{R}^3)}\big\|\langle u_-\rangle^\omega\nabla^{l_2}z_+\big\|_{L^\infty(\mathbb{R}^3)}
		\int_{|x-x'|\leqslant 2}\frac{1}{|x-x'|^2}dx'\\
		&\stackrel{\eqref{eq:Sobolev}}{\lesssim}\big(C_1\big)^2\varepsilon^2	\int_{|x-x'|\leqslant 2}\frac{1}{|x-x'|^2}dx'\\
		&\stackrel{\eqref{observe integration}}{\lesssim} \big(C_1\big)^2\varepsilon^2,\\
		\big(R+\tau\big)^{{\omega}}|\mathbf{B}_l|
		&\stackrel{\eqref{eq:separation weight}}{\lesssim}\int_{|x-x'|\geqslant 1}\frac{\big(\langle u_+\rangle^\omega\langle u_-\rangle^{ \omega}\big)(\tau,x')|(\nabla^{\lfloor\frac{l}{2}\rfloor}z_{-}\cdot\nabla^{\lfloor\frac{l-1}{2}\rfloor}  z_{+})(\tau,x')|}{|x-x'|^4}dx'\\
		&\lesssim \big\|\langle u_+\rangle^\omega\nabla^{\lfloor\frac{l}{2}\rfloor}z_-\big\|_{L^\infty(\mathbb{R}^3)}\big\|\langle u_-\rangle^\omega\nabla^{\lfloor\frac{l-1}{2}\rfloor}z_+\big\|_{L^\infty(\mathbb{R}^3)}
		\int_{|x-x'|\geqslant 1}\frac{1}{|x-x'|^4}dx'\\
		&\stackrel{\eqref{eq:Sobolev}}{\lesssim}\big(C_1\big)^2\varepsilon^2\int_{|x-x'|\geqslant 1}\frac{1}{|x-x'|^4}dx'\\
		&\stackrel{\eqref{observe integration}}{\lesssim} \big(C_1\big)^2\varepsilon^2.
	\end{align*}
	
	Putting these two estimates together into \eqref{eq:nablalp}, we thus obtain the result \eqref{eq:pressure derivative bound}.
\end{proof}

\smallskip

\begin{lemma}\label{lemma:pressureL2L2} For $l=1,2,3$, we have the following space-time estimates on pressure:
	\begin{equation}\label{eq:pressureL2L2}
		\int_{\mathbb{R}\times \mathbb{R}^3} \langle u_{\mp}\rangle^{2\omega}\langle u_{\pm}\rangle^\omega |\nabla^l p |^2\lesssim\big(C_1\big)^4\varepsilon^4.
	\end{equation}
\end{lemma}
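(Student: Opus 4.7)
The plan is to apply the decomposition $\nabla^l p=\mathbf{A}_l+\mathbf{B}_l$ from Lemma \ref{lemma:pressure} and handle each piece separately using the Sobolev--plus--flux template that underlies the rest of this section. Writing $w:=\langle u_\mp\rangle^{\omega}\langle u_\pm\rangle^{\omega/2}$, so that the target weight is $w^2$, the goal is to prove $\|w\,\nabla^l p\|_{L^2([0,t^*]\times\mathbb{R}^3)}\lesssim (C_1\varepsilon)^2$. After pulling $w$ inside the convolutions defining $\mathbf{A}_l$ and $\mathbf{B}_l$, Young's inequality is supposed to convert each piece into an $L^2_x$-norm of a bilinear product of the form $\nabla^{l_1}z_-\cdot \nabla^{l_2}z_+$; one factor should be bounded pointwise by the weighted Sobolev estimate of Lemma \ref{lemma:Sobolev}, so that the remaining weighted $L^2_{t,x}$-integral has exactly the shape controlled by the flux estimate of Lemma \ref{lemma:flux}. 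Two applications of a $(C_1\varepsilon)^2$-bound would then close the argument.

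For $\mathbf{A}_l$, the short-range kernel $|y|^{-2}\chi_{|y|\leqslant 2}$ is in $L^1(\mathbb{R}^3)$ by \eqref{observe integration}, and Lemma \ref{lemma:weights}(ii) gives the two-sided comparison $w(\tau,x)\approx w(\tau,x')$ on $|x-x'|\leqslant 2$. One can therefore freely move $w$ across the integral before invoking Young's convolution inequality, and the reduction described above applies without additional cost, pairing Lemma \ref{lemma:Sobolev} on one factor of the bilinear form against Lemma \ref{lemma:flux} on the other.

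For $\mathbf{B}_l$, the long-range kernel $|y|^{-4}\chi_{|y|\geqslant 1}$ is again $L^1(\mathbb{R}^3)$, but Lemma \ref{lemma:weights}(ii) now only supplies the Lipschitz-type bound $w(\tau,x)\lesssim w(\tau,x')+|x-x'|^{3\omega/2}$. The $w(x')$-contribution is handled as for $\mathbf{A}_l$ and yields a first $(C_1)^4\varepsilon^4$-bound. The remaining term is the delicate one: the effective kernel $|y|^{3\omega/2-4}\chi_{|y|\geqslant 1}$ fails to be $L^1(\mathbb{R}^3)$ in our range $\omega\in(1,5/3)$. My plan is to expand the polynomial correction via the primitive bound $\langle u_\pm\rangle^\omega(\tau,x)\lesssim \langle u_\pm\rangle^\omega(\tau,x')+|x-x'|^\omega$ into a sum of intermediate pieces carrying mixed weight-and-$|x-x'|$ factors, and then to absorb the spare $\langle u_\pm\rangle(\tau,x')$-weights against the pointwise bilinear decay $|g(\tau,x')|\lesssim (C_1\varepsilon)^2\langle u_+\rangle^{-\omega}\langle u_-\rangle^{-\omega}(\tau,x')$ furnished by Lemma \ref{lemma:separation estimate}, with any residual polynomial growth in $|x-x'|$ converted into $\tau$-decay via the separation bound $\langle u_+\rangle\langle u_-\rangle\gtrsim R+\tau$ from Lemma \ref{lemma:separation weight}.

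The principal obstacle I expect is precisely this long-range bookkeeping inside $\mathbf{B}_l$: for each intermediate piece one must verify that (i) the effective convolution kernel is back in $L^1(\mathbb{R}^3)$ after all of the weight redistribution, and (ii) the surviving $L^2_x$-factor of a single $\nabla^j z_\pm$ is measured in exactly the weight $\langle u_\mp\rangle^{2\omega}\langle u_\pm\rangle^{-\omega}$ for which Lemma \ref{lemma:flux} provides the $(C_1\varepsilon)^2$-bound. The bootstrap threshold $\delta<\tfrac{2}{3}$ (equivalently $\omega<\tfrac{5}{3}$) should appear naturally as the sharp cut-off that keeps all the relevant intermediate exponents on the right side of these integrability windows, and this delicate balance is what justifies the paper's assertion that the pressure estimate is the most involved technical ingredient of the bootstrap.
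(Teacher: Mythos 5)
Your treatment of $\mathbf{A}_l$ and of the $w(\tau,x')$-contribution to $\mathbf{B}_l$ matches the paper's proof: move the weight across the kernel via \eqref{eq:xgeqleq}, apply Young's $L^1\ast L^2\to L^2$ inequality, put one factor of the bilinear product in weighted $L^\infty$ via Lemma \ref{lemma:Sobolev} and the other in the flux-controlled space $\frac{\langle u_\mp\rangle^{\omega}}{\langle u_\pm\rangle^{\omega/2}}L^2$ via Lemma \ref{lemma:flux}. You also correctly isolate the genuinely delicate term, namely the $|x-x'|^{3\omega/2}$ correction in $\mathbf{B}_l$ whose effective kernel $|y|^{3\omega/2-4}\chi_{|y|\geqslant 1}$ is not in $L^1(\mathbb{R}^3)$, and you correctly anticipate that $\omega<\tfrac{5}{3}$ is the operative threshold.

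However, your proposed resolution of that last term does not close as described, and this is the one step that actually carries the lemma. Bounding the bilinear product pointwise by Lemma \ref{lemma:separation estimate} and converting the weights into $(R+\tau)^{-\omega}$ via Lemma \ref{lemma:separation weight} discards all $x'$-integrability of the $z_\pm$ factors, and you are then left with $\int_{|x-x'|\geqslant 1}|x-x'|^{3\omega/2-4}dx'$, which diverges for every $\omega>\tfrac{2}{3}$; there is no way to trade growth in $|x-x'|$ for decay in $\tau$, since the separation bound relates the weights at a single point to $R+\tau$ and says nothing about $|x-x'|$. The paper's mechanism is different: it applies Young's inequality in the $L^2\ast L^1\to L^2$ form, placing the kernel $|y|^{3\omega/2-4}\chi_{|y|\geqslant 1}$ in $L^2(\mathbb{R}^3)$ --- this is exactly where $4-\tfrac{3\omega}{2}>\tfrac{3}{2}$, i.e.\ $\omega<\tfrac{5}{3}$, enters via \eqref{observe integration} --- and the full product $\nabla^{\lfloor l/2\rfloor}z_-\cdot\nabla^{\lfloor (l-1)/2\rfloor}z_+$ in $L^1_x$. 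That $L^1_x$-norm is then estimated by inserting $1\lesssim (R+\tau)^{-\omega}\langle u_+\rangle^{\omega}\langle u_-\rangle^{\omega}$ and applying Cauchy--Schwarz, so that each factor lands in its own weighted energy norm from \eqref{Bootstrap on energy}, leaving $(R+\tau)^{-2\omega}$ integrable in time. In short: the surviving information must be the $L^2_{x'}$ energies of $z_\pm$, not their pointwise decay, and the non-$L^1$ kernel must be measured in $L^2$ rather than redistributed into the weights.
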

\begin{proof}
	It follows from the estimate \eqref{eq:nablalp} that 
	\begin{equation}\label{eq:I}
		\int_{\mathbb{R}\times \mathbb{R}^3}\langle u_{\mp}\rangle^{2\omega}\langle u_{\pm}\rangle^\omega |\nabla^l p|^2\lesssim 
		\underbrace{\int_{0}^{t}\int_{\Sigma_{\tau}}\langle u_{\mp}\rangle^{2\omega}\langle u_{\pm}\rangle^\omega |\mathbf{A}_l|^2}_{\mathbf{I}_{\mathbf{A}_l}}
		+\underbrace{\int_{0}^{t}\int_{\Sigma_{\tau}}\langle u_{\mp}\rangle^{2\omega}\langle u_{\pm}\rangle^\omega |\mathbf{B}_l|^2}_{\mathbf{I}_{\mathbf{B}_l}}.
	\end{equation}
	
	For $\mathbf{I}_{\mathbf{A}_l}$, according to \eqref{eq:Al} and Young's inequality, we obtain 
	\begin{align*}
		\mathbf{I}_{\mathbf{A}_l}
		&\lesssim\sum^l_{l_1,l_2=1} 
		\int_{0}^{t}\Big\|\langle u_{\mp}\rangle^{\omega}(\tau,x)\langle u_{\pm}\rangle^{\frac{\omega}{2}}(\tau,x)\int_{|x-x'|\leqslant 2}\frac{|(\nabla^{l_1} z_{\mp}\cdot\nabla^{l_2} z_{\pm})(\tau,x')|}{|x-x'|^2}dx'\Big\|_{L^2(\mathbb{R}^3)}^2\\
		&\stackrel{\eqref{eq:xgeqleq}}{\lesssim}  \sum^l_{l_1,l_2=1} \int_{0}^{t}\Big\|\int_{|x-x'|\leqslant 2} \frac{\big(\langle u_{\mp}\rangle^{\omega}\langle u_{\pm}\rangle^{\frac{\omega}{2}}\big)(\tau,x')|(\nabla^{l_1} z_{\mp}\cdot\nabla^{l_2} z_{\pm})(\tau,x')|}{|x-x'|^2}dx'\Big\|_{L^2(\mathbb{R}^3)}^2\\
		&\stackrel{\text{Young's}}{\lesssim}\sum^l_{l_1,l_2=1} 
		\int_{0}^{t}\Big\|\frac{1}{|x|^2}\text{\Large$\chi$}_{|x|\leqslant 2}\Big\|_{L^1(\mathbb{R}^3)}^2 \big\|\langle u_{\pm}\rangle^{\omega}\nabla^{l_1}z_{\mp}\big\|_{L^\infty(\mathbb{R}^3)}^2\Big\|\frac{\langle u_{\mp}\rangle^{\omega}}{\langle u_{\pm}\rangle^{\frac{\omega}{2}}}\nabla^{l_2}
		z_{\pm}\Big\|_{L^2(\mathbb{R}^3)}^2 \\
		&\stackrel{\eqref{eq:Sobolev},\eqref{observe integration}}{\lesssim}\sum^l_{l_2=1}\big(C_1\big)^2\varepsilon^2\Big\|\frac{\langle u_{\mp}\rangle^{\omega}}{\langle u_{\pm}\rangle^{\frac{\omega}{2}}}
		\nabla^{l_2} z_{\pm}\Big\|^2_{L^2(\mathbb{R}^3)}\\
		&\stackrel{\eqref{eq:flux}}{\lesssim} \big(C_1\big)^4\varepsilon^4.\stepcounter{equation}\tag{\theequation}\label{eq:IAl}
	\end{align*}

	For $\mathbf{I}_{\mathbf{B}_l}$, by virtue of \eqref{eq:Bl}, we can derive 
	\begin{align*}
		\mathbf{I}_{\mathbf{B}_l}
		&\lesssim
		\int_{0}^{t}\Big\|\langle u_{\mp}\rangle^{\omega}(\tau,x)\langle u_{\pm}\rangle^{\frac{\omega}{2}}(\tau,x)\int_{|x-x'|\geqslant 1}\frac{|(\nabla^{\lfloor\frac{l}{2}\rfloor}z_{\mp}\cdot\nabla^{\lfloor\frac{l-1}{2}\rfloor}  z_{\pm})(\tau,x')|}{|x-x'|^4}dx'\Big\|_{L^2(\mathbb{R}^3)}^2\\
		&\stackrel{\eqref{eq:xgeqleq}}{\lesssim} \int_{0}^{t}\Big\|\int_{|x-x'|\geqslant 1}\Big(\big(\langle u_{\mp}\rangle^{\omega}\langle u_{\pm}\rangle^{\frac{\omega}{2}}\big)(\tau,x')+|x-x'|^{\frac{3\omega}{2}}\Big)\frac{|(\nabla^{\lfloor\frac{l}{2}\rfloor}z_{\mp}\cdot\nabla^{\lfloor\frac{l-1}{2}\rfloor}  z_{\pm})(\tau,x')|}{|x-x'|^4}dx'\Big\|_{L^2(\mathbb{R}^3)}^2\\
		&\lesssim
		\underbrace{\int_{0}^{t}\Big\|\int_{|x-x'|\geqslant 1}\big(\langle u_{\mp}\rangle^{\omega}\langle u_{\pm}\rangle^{\frac{\omega}{2}}\big)(\tau,x')\frac{|(\nabla^{\lfloor\frac{l}{2}\rfloor}z_{\mp}\cdot\nabla^{\lfloor\frac{l-1}{2}\rfloor}  z_{\pm})(\tau,x')|}{|x-x'|^4}dx'\Big\|_{L^2(\mathbb{R}^3)}^2}_{\mathbf{I}_{\mathbf{B}_l}'}\\
		&\ \ \ \ +\underbrace{\int_{0}^{t}\Big\|\int_{|x-x'|\geqslant 1}\frac{|(\nabla^{\lfloor\frac{l}{2}\rfloor}z_{\mp}\cdot\nabla^{\lfloor\frac{l-1}{2}\rfloor}  z_{\pm})(\tau,x')|}{|x-x'|^{4-\frac{3\omega}{2}}}dx'\Big\|_{L^2(\mathbb{R}^3)}^2}_{\mathbf{I}_{\mathbf{B}_l}''}.
	\end{align*}
In exactly the same manner as in \eqref{eq:IAl},  $\mathbf{I}_{\mathbf{B}_l}'$ can be bounded as follows:
	\begin{align*}
		\mathbf{I}_{\mathbf{B}_l}'&
		\stackrel{\text{Young's}}{\lesssim}
		\int_{0}^{t}\Big\|\frac{1}{|x|^4}\text{\Large$\chi$}_{|x|\geqslant 1}\Big\|_{L^1(\mathbb{R}^3)}^2 \big\|\langle u_{\pm}\rangle^{\omega}\nabla^{\lfloor\frac{l}{2}\rfloor}z_{\mp}\big\|_{L^\infty(\mathbb{R}^3)}^2\Big\|\frac{\langle u_{\mp}\rangle^{\omega}}{\langle u_{\pm}\rangle^{\frac{\omega}{2}}}\nabla^{\lfloor\frac{l-1}{2}\rfloor}
		z_{\pm}\Big\|_{L^2(\mathbb{R}^3)}^2 \\
		&\stackrel{\eqref{eq:Sobolev},\eqref{observe integration}}{\lesssim}\big(C_1\big)^2\varepsilon^2\int_{0}^{t}\Big\|\frac{\langle u_{\mp}\rangle^{\omega}}{\langle u_{\pm}\rangle^{\frac{\omega}{2}}}\nabla^{\lfloor\frac{l-1}{2}\rfloor}z_{\pm}\Big\|_{L^2(\mathbb{R}^3)}^2\\
		&\stackrel{\eqref{eq:flux}}{\lesssim} \big(C_1\big)^4\varepsilon^4.
	\end{align*}
	It remains to bound $\mathbf{I}_{\mathbf{B}_l}''$. Notice that \eqref{observe integration} and $\omega=1+\delta$ give us  $\dfrac{1}{|x|^{4-\frac{3\omega}{2}}}\text{\Large$\chi$}_{|x|\geqslant 1}\in L^2(\mathbb{R}^3)$ when $\omega\in(1,\frac{5}{3})$ (this is the place where we need constraints $\delta \in(0,\frac{2}{3})$). We can also infer 
	\begin{align*}
		\mathbf{I}_{\mathbf{B}_l}''&
		\stackrel{\text{Young's}}{\lesssim}
		\int_{0}^{t}\Big\|\frac{1}{|x|^{4-\frac{3\omega}{2}}}\text{\Large$\chi$}_{|x|\geqslant 1}\Big\|_{L^2(\mathbb{R}^3)}^2\big\|\nabla^{\lfloor\frac{l}{2}\rfloor}z_{-}\cdot\nabla^{\lfloor\frac{l-1}{2}\rfloor}  z_{+}\big\|_{L^1(\mathbb{R}^3)}^2\\
		&\stackrel{\eqref{observe integration}}{\lesssim} \int_{0}^{t}\big\|\nabla^{\lfloor\frac{l}{2}\rfloor}z_{-}\cdot\nabla^{\lfloor\frac{l-1}{2}\rfloor}  z_{+}\big\|_{L^1(\mathbb{R}^3)}^2\\
		&\stackrel{\eqref{eq:separation weight}}{\lesssim}\int_{0}^{t}\frac{1}{(R+\tau)^{2\omega}}\Big\|\langle u_{+}\rangle^{\omega}\nabla^{\lfloor\frac{l}{2}\rfloor} z_{-}\langle u_{-}\rangle^{\omega}\nabla^{\lfloor\frac{l-1}{2}\rfloor}  z_{+}\Big\|_{L^1(\mathbb{R}^3)}^2\\
		&\stackrel{\text{H\"older}}{\lesssim} \int_{0}^{t}\frac{1}{(R+\tau)^{2\omega}}\big\|\langle u_{+}\rangle^{\omega}\nabla^{\lfloor\frac{l}{2}\rfloor} z_{-}\big\|_{L^2(\mathbb{R}^3)}^2\big\|\langle u_{-}\rangle^{\omega}\nabla^{\lfloor\frac{l-1}{2}\rfloor} z_{+}\big\|_{L^2(\mathbb{R}^3)}^2\\
		&\stackrel{\eqref{Bootstrap on energy}}{\lesssim} \big(C_1\big)^2\varepsilon^2\big(C_1\big)^2\varepsilon^2\int_{0}^{t}\frac{1}{(R+\tau)^{2\omega}}d\tau\lesssim \big(C_1\big)^4\varepsilon^4. 
	\end{align*}
Thus we conclude that 
 \begin{equation}\label{eq:IBl}
 \mathbf{I}_{\mathbf{B}_l}\lesssim \big(C_1\big)^4\varepsilon^4. 
 \end{equation}
 
Finally we obtain \eqref{eq:pressureL2L2} by
substituting \eqref{eq:IAl} and \eqref{eq:IBl} into \eqref{eq:I}. This is what we wanted to prove.
\end{proof}

\smallskip

As a direct consequence of Lemma \ref{lemma:pressureL2L2}, we have the following $L_\tau^2L_x^\infty$ estimate for $\nabla p$: 
\begin{lemma}\label{lemma:pressureL2Linfty} There holds 
	\begin{equation}\label{eq:pressureL2Linfty}
		\int_{\mathbb{R}}\big\|\langle u_\mp\rangle^{\omega}\langle u_\pm\rangle^{\frac{\omega}{2}}\nabla p\big\|^2_{L^\infty_{x}}d\tau\lesssim\big(C_1\big)^4\varepsilon^4.
	\end{equation}
\end{lemma}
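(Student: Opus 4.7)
The plan is to obtain the $L^2_\tau L^\infty_x$ bound from the $L^2_{\tau,x}$ bound in Lemma \ref{lemma:pressureL2L2} by a Sobolev embedding on each time slice. Since $H^2(\mathbb{R}^3)\hookrightarrow L^\infty(\mathbb{R}^3)$, for fixed $\tau$ I would write
\begin{equation*}
\big\|\langle u_\mp\rangle^{\omega}\langle u_\pm\rangle^{\frac{\omega}{2}}\nabla p\big\|_{L^\infty_x}^2 \lesssim \sum_{k=0}^{2}\big\|\nabla^{k}\big(\langle u_\mp\rangle^{\omega}\langle u_\pm\rangle^{\frac{\omega}{2}}\nabla p\big)\big\|_{L^2_x}^2,
\end{equation*}
and then integrate in $\tau\in\mathbb{R}$.

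Next, I would expand the right-hand side by the Leibniz rule, so that each summand becomes a sum of terms of the form $\big|\nabla^{k_1}(\langle u_\mp\rangle^{\omega}\langle u_\pm\rangle^{\omega/2})\big|\cdot|\nabla^{k_2+1}p|$ with $k_1+k_2=k\leqslant 2$, i.e.\ with $k_2+1\in\{1,2,3\}$. Exactly here I invoke the third inequality of \eqref{differentiate weights coro} in Lemma \ref{lemma:weights}, which tells me that derivatives of the weight up to second order are majorized by the weight itself, so that
\begin{equation*}
\big|\nabla^{k_1}\big(\langle u_\mp\rangle^{\omega}\langle u_\pm\rangle^{\frac{\omega}{2}}\big)\big|\cdot|\nabla^{k_2+1}p| \;\lesssim\; \langle u_\mp\rangle^{\omega}\langle u_\pm\rangle^{\frac{\omega}{2}}\,|\nabla^{k_2+1}p|,
\end{equation*}
for $k_1+k_2\leqslant 2$. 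Squaring and integrating over $\Sigma_\tau$ gives the spatial $L^2$ norm
\begin{equation*}
\int_{\Sigma_\tau}\langle u_\mp\rangle^{2\omega}\langle u_\pm\rangle^{\omega}\,|\nabla^{l}p|^{2}\,dx,\qquad l=1,2,3.
\end{equation*}

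Finally, integrating the resulting inequality in $\tau$ over $\mathbb{R}$ and summing over $l\in\{1,2,3\}$, the right-hand side is precisely controlled by the space-time integrals bounded in \eqref{eq:pressureL2L2} of Lemma \ref{lemma:pressureL2L2}, each of which is $\lesssim (C_1)^4\varepsilon^4$. Adding the finitely many contributions yields the claim \eqref{eq:pressureL2Linfty}. There is no real obstacle here beyond verifying that the weight $\langle u_\mp\rangle^{\omega}\langle u_\pm\rangle^{\omega/2}$ satisfies the hypothesis $|\nabla^l\lambda|\lesssim \lambda$ for $l=1,2$; this is exactly what \eqref{differentiate weights coro} provides, so Lemma \ref{lemma:pressureL2Linfty} really is a direct corollary of Lemma \ref{lemma:pressureL2L2} via a Sobolev embedding, as stated.
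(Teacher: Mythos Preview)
Your proposal is correct and follows essentially the same approach as the paper: Sobolev embedding $H^2(\mathbb{R}^3)\hookrightarrow L^\infty(\mathbb{R}^3)$ on each time slice, Leibniz rule together with \eqref{differentiate weights coro} to absorb derivatives of the weight, and then an appeal to Lemma~\ref{lemma:pressureL2L2} for $l=1,2,3$. The only cosmetic difference is that the paper records just the $l=1$ and $l=3$ contributions (using $\|f\|_{H^2}^2\lesssim \|f\|_{L^2}^2+\|\nabla^2 f\|_{L^2}^2$), whereas you keep all three; both are covered by \eqref{eq:pressureL2L2}.
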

\begin{proof}
In fact, we can use the standard Sobolev inequality on $\mathbb{R}^3$ (for $x$) to get 
	\begin{align*}
		\int_{\mathbb{R}}\big\|\langle u_\mp\rangle^{\omega}\langle u_\pm\rangle^{\frac{\omega}{2}}\nabla p\big\|^2_{L^\infty_{x}}d\tau&\lesssim\int_{\mathbb{R}}\bigg(\big\|\langle u_\mp\rangle^{\omega}\langle u_\pm\rangle^{\frac{\omega}{2}}\nabla p\big\|^2_{L^2_{x}}+\big\|\nabla^2\big(\langle u_\mp\rangle^{\omega}\langle u_\pm\rangle^{\frac{\omega}{2}}\nabla p\big)\big\|^2_{L^2_{x}}\bigg)d\tau\\
		&\stackrel{\eqref{differentiate weights coro}}{\lesssim}\int_{\mathbb{R}}\bigg(\big\|\langle u_\mp\rangle^{\omega}\langle u_\pm\rangle^{\frac{\omega}{2}}\nabla p\big\|^2_{L^2(\mathbb{R}^3)}+\big\| \langle u_\mp\rangle^{\omega}\langle u_\pm\rangle^{\frac{\omega}{2}}\nabla^3 p\big\|^2_{L^2(\mathbb{R}^3)}\bigg)d\tau\\
		&=\int_{\mathbb{R}\times \mathbb{R}^3}\langle u_{\mp}\rangle^{2\omega}\langle u_{\pm}\rangle^\omega |\nabla p|^2+\int_{\mathbb{R}\times \mathbb{R}^3}\langle u_{\mp}\rangle^{2\omega}\langle u_{\pm}\rangle^\omega |\nabla^3 p|^2\\
		&\stackrel{\eqref{eq:pressureL2L2}}{\lesssim}\big(C_1\big)^4\varepsilon^4,
	\end{align*}
which gives the desired result of this lemma. 
\end{proof}

\smallskip

Moreover, we can also derive the space-time estimates for higher order derivatives of the pressure.

\begin{lemma}\label{lemma:pressureL2L2-H} 
	For $k=1,2,...,N_*+1$, we have the following space-time estimates on pressure:
	\begin{equation}\label{eq:pressureL2L2-H}
		\int_{\mathbb{R}\times \mathbb{R}^3} \langle u_{\mp}\rangle^{2\omega}\langle u_{\pm}\rangle^\omega |\nabla^k\nabla p |^2\lesssim\big(C_1\big)^4\varepsilon^4.
	\end{equation}
\end{lemma}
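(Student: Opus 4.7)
Following the structure of Lemma \ref{lemma:pressure} and its $L^2$ counterpart Lemma \ref{lemma:pressureL2L2}, I plan to derive a decomposition for $\nabla^{k+1}p$ analogous to \eqref{eq:pressure derivative} and then estimate each component using the bootstrap energy ansatz \eqref{Bootstrap on energy}, the weighted Sobolev inequality (Lemma \ref{lemma:Sobolev}), the spacetime estimates (Lemma \ref{lemma:flux}), and the separation estimate (Lemma \ref{lemma:separation estimate}).

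First, I would generalize \eqref{eq:pressure derivative} to a decomposition $\nabla^{k+1}p(\tau,x) = \mathbf{A}_{k+1} + \mathbf{B}_{k+1}$ via the cutoff $\theta(|x-x'|)$. For the near-field $\mathbf{A}_{k+1}$, I shift $k$ derivatives from $\nabla^{k+1}_x$ onto $\theta(|x-x'|)\,\partial_i z^j_-\partial_j z^i_+$ through iterated integration by parts, using $\nabla_x (1/|x-x'|) = -\nabla_{x'}(1/|x-x'|)$; the kernel retains the locally integrable factor $1/|x-x'|^2$, while $k$ extra derivatives are distributed by Leibniz among $\theta$, $\partial_i z^j_-$, and $\partial_j z^i_+$. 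For the far-field $\mathbf{B}_{k+1}$, I balance the integration by parts so that each $z$-factor carries at most $\lceil(k+1)/2\rceil+1$ derivatives while the kernel keeps at least $3$ derivatives, producing decay of at least $1/|x-x'|^4$ at infinity, enough to overcome the polynomial weight growth allowed by \eqref{eq:xgeqleq}.

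Each resulting Leibniz term is a product $\nabla^a z_-\cdot\nabla^b z_+$ with $a+b\leq k+2$, which I would handle exactly as in the proof of Lemma \ref{lemma:pressureL2L2}: pair the lower-derivative factor with the weighted $L^\infty$ Sobolev bound (Lemma \ref{lemma:Sobolev}, valid for $\leq N_*-1$ derivatives) and the higher-derivative factor with the weighted $L^2$ energy bound (valid for $\leq N_*+1$ derivatives), then invoke Young's convolution inequality $L^1\ast L^2\subset L^2$ in the near-field. For the long-range subpart of $\mathbf{B}_{k+1}$, where the weight translation $|x-x'|^{3\omega/2}$ obstructs a direct $L^1$ convolution, I use H\"older combined with the separation estimate of Lemma \ref{lemma:separation estimate} to produce a time-integrable factor $(R+\tau)^{-2\omega}$, mirroring the bound on $\mathbf{I}_{\mathbf{B}_l}''$ in Lemma \ref{lemma:pressureL2L2}.

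The principal obstacle arises at the top order $k=N_*+1$, where a naive Leibniz expansion of the near-field integrand produces a term with up to $k+1=N_*+2$ derivatives on a single $z$-factor, one derivative beyond the available weighted $L^2$ energy bound. To remedy this, I would exploit the divergence-free identity
\[
\partial_i z^j_-\,\partial_j z^i_+ = \partial_i\partial_j(z^j_- z^i_+),
\]
valid because $\operatorname{div} z_\pm = 0$, which expresses $p$ as the double-divergence Newton potential of $z^j_- z^i_+$, equivalently as a zeroth-order Calder\'on-Zygmund operator applied to the product. Moving those two extra divergences onto the kernel after the cutoff split lowers the effective derivative count distributed on the $z$-factors by two, so that in every term one factor carries $\leq N_*+1$ derivatives (within the $L^2$ range) and the other carries $\leq\lceil(k-1)/2\rceil\leq N_*-1$ derivatives (within the $L^\infty$ Sobolev range, which is exactly where the hypothesis $N_*\geq 5$ is invoked). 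Combining the refined near-field and far-field bounds then yields
\[
\int_{\mathbb{R}\times\mathbb{R}^3}\langle u_\mp\rangle^{2\omega}\langle u_\pm\rangle^{\omega}|\nabla^{k+1}p|^2 \lesssim (C_1)^4\varepsilon^4
\]
for all $k=1,\ldots,N_*+1$, completing the proof.
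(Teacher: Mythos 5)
Your plan runs into a genuine obstruction in the near-field part at the top order $k=N_*+1$, and the fix you propose does not repair it. The bookkeeping is rigid: the integrand of $\nabla^{k+1}p$ carries a fixed total of $k+3$ derivatives ($k+1$ on the kernel plus the two in $\partial_i z_-^j\partial_j z_+^i$), and by \eqref{observe integration} the near-field kernel can retain at most one derivative ($\sim|x-x'|^{-2}$) if it is to stay locally integrable, so at least $k+2$ derivatives must be distributed over the two $z$-factors; the worst Leibniz term then places $k+1=N_*+2$ derivatives on a single $z$, one beyond the range of the energy ansatz \eqref{Bootstrap on energy}, exactly as you note. The identity $\partial_i z_-^j\partial_j z_+^i=\partial_i\partial_j(z_-^jz_+^i)$ does not change the total count: it only helps if the two divergences are transferred to the kernel, but then the near-field kernel becomes $\partial_i\partial_j\nabla_x|x-x'|^{-1}\sim|x-x'|^{-4}$, which is not in $L^1_{\mathrm{loc}}(\mathbb{R}^3)$, so the step ``invoke Young's inequality $L^1\ast L^2\subset L^2$'' is no longer available. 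What remains is a (truncated) second-order Riesz transform, i.e.\ genuine Calder\'on--Zygmund theory in the weighted space $L^2(\langle u_\mp\rangle^{2\omega}\langle u_\pm\rangle^{\omega}dx)$; since this weight grows like $|x_3|^{3\omega}$ with $3\omega>1$, it is not an $A_2$ weight, and any weighted boundedness would have to be proved by hand (say, using the unit-range support of the truncated kernel together with the local comparability \eqref{eq:xgeqleq}). None of this is supplied, and it is precisely the crux of the case $k=N_*+1$.

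For the record, the paper sidesteps the singular integral entirely for $k\geqslant 1$: since $\operatorname{curl}\nabla p=0$ and $\operatorname{div}\nabla p=\Delta p=-\partial_i z_-^j\partial_j z_+^i$, one takes $v=\nabla p$ in the weighted div--curl lemma (Lemma \ref{lemma:divcurl} and Corollary \ref{coro:divcurl}), which reduces \eqref{eq:pressureL2L2-H} to the already established low-order case of Lemma \ref{lemma:pressureL2L2} plus weighted space-time bounds on products $\nabla z_\mp^{(s)}\cdot\nabla z_\pm^{(l-s)}$ with $s,\,l-s\leqslant N_*$, so no factor ever exceeds $N_*+1$ derivatives of $z$. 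The balanced terms are then closed by Lemma \ref{lemma:Sobolev} and Lemma \ref{lemma:flux} as you anticipate, while the imbalanced top-order terms are handled by an $L^\infty_\tau L^2_x\times L^2_\tau L^\infty_x$ H\"older splitting. To salvage your route you would either need to supply the weighted Calder\'on--Zygmund input described above or replace the final near-field step by such a div--curl reduction.
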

\begin{proof}
Observing that  $\operatorname{div}\nabla p=\Delta p=-\partial_iz_-^j\partial_jz_+^i$ and $\operatorname{curl}\nabla p=0$, 	
	we first take $v=\nabla p$ in Lemma \ref{lemma:divcurl} and apply Corollary \ref{coro:divcurl} to derive  
	\begin{align*}
		&\ \ \ \ \big\|\langle u_\mp\rangle^{\omega}\langle u_\pm\rangle^{\frac{\omega}{2}}\nabla^k \nabla p\big\|_{L^2(\mathbb{R}^3)}^2\\
		& \lesssim\sum_{l=0}^{k-1}\big\|\langle u_\mp\rangle^{\omega}\langle u_\pm\rangle^{\frac{\omega}{2}}\operatorname{div }\nabla^l\nabla p\big\|_{L^2(\mathbb{R}^3)}^2+\sum_{l=0}^{k-1} \big\|\langle u_\mp\rangle^{\omega}\langle u_\pm\rangle^{\frac{\omega}{2}}\operatorname{curl }\nabla^l\nabla p\big\|_{L^2(\mathbb{R}^3)}^2+ \big\|\langle u_\mp\rangle^{\omega}\langle u_\pm\rangle^{\frac{\omega}{2}} \nabla p\big\|_{L^2(\mathbb{R}^3)}^2\\
		&\lesssim\sum_{l=0}^{k-1}\big\|\langle u_\mp\rangle^{\omega}\langle u_\pm\rangle^{\frac{\omega}{2}}\nabla^l(\nabla z_\mp\cdot \nabla z_\pm)\big\|_{L^2(\mathbb{R}^3)}^2+\big\|\langle u_\mp\rangle^{\omega}\langle u_\pm\rangle^{\frac{\omega}{2}}\nabla p\big\|_{L^2(\mathbb{R}^3)}^2\\
		&\lesssim\sum_{l=0}^{k-1}\sum_{s=0}^l\big\|\langle u_\mp\rangle^{\omega}\langle u_\pm\rangle^{\frac{\omega}{2}}\big(\nabla z_\mp^{(s)}\cdot \nabla z_\pm^{(l-s)}\big)\big\|_{L^2(\mathbb{R}^3)}^2+\big\|\langle u_\mp\rangle^{\omega}\langle u_\pm\rangle^{\frac{\omega}{2}}\nabla p\big\|_{L^2(\mathbb{R}^3)}^2.
	\end{align*}
It is now obvious that 
	\begin{equation}\label{eq:I1sumI2}
	\mathbf{I}:=	\int_{\mathbb{R}\times \mathbb{R}^3} \langle u_{\mp}\rangle^{2\omega}\langle u_{\pm}\rangle^\omega \big|\nabla^k\nabla p \big|^2\lesssim \underbrace{\sum_{l=0}^{k-1}\sum_{s=0}^l\int_{\mathbb{R}\times \mathbb{R}^3}\langle u_\mp\rangle^{2\omega}\langle u_\pm\rangle^{\omega}\big|\nabla z_\mp^{(s)}\big|^2\big|\nabla z_\pm^{(l-s)}\big|^2}_{\mathbf{I_1}}+\underbrace{\int_{\mathbb{R}\times \mathbb{R}^3}\langle u_\mp\rangle^{2\omega}\langle u_\pm\rangle^{\omega}|\nabla p|^2}_{\mathbf{I_2}}.
	\end{equation}
Thanks to Lemma \ref{lemma:pressureL2L2}, we have 
\begin{equation}\label{eq:I2}
	\mathbf{I_2}\lesssim\big(C_1\big)^4\varepsilon^4.
\end{equation}
Hence it remains to bound $\mathbf{I_1}$.

\smallskip

For this purpose, we distinguish the following two cases according to the order of derivatives:\\
\textbf{Case 1}: $1\leqslant k\leqslant N_*-1$. In this case, we can use Lemma \ref{lemma:Sobolev} and Lemma \ref{lemma:flux} to get 
	\begin{align*}
		\mathbf{I_1}
		&\lesssim \sum_{l=0}^{k-1}\sum_{s=0}^l\int_{\mathbb{R}}\big\|\langle u_\pm\rangle^{\omega}\nabla z_\mp^{(s)}\big\|^2_{L^\infty(\mathbb{R}^3)}\int_{\mathbb{R}^3}\frac{\langle u_\mp\rangle^{2\omega}}{\langle u_\pm\rangle^{\omega}}\big|\nabla z_\pm^{(l-s)}\big|^2\\
		&\stackrel{\eqref{eq:Sobolev},\eqref{eq:flux}}{\lesssim}\big(C_1\big)^4\varepsilon^4.\stepcounter{equation}\tag{\theequation}\label{eq:I1case1}
\end{align*}
\textbf{Case 2}:  $k=N_*$ or $N_*+1$. In such a case, we can rewrite $\mathbf{I_1}$ as
\begin{align*}
	\mathbf{I_1}
		&\lesssim \sum_{l=0}^{N_*-2}\sum_{s=0}^l\int_{\mathbb{R}\times \mathbb{R}^3}\langle u_\mp\rangle^{2\omega}\langle u_\pm\rangle^{\omega}|\nabla z_\mp^{(s)}|^2|\nabla z_\pm^{(l-s)}|^2+\sum_{l=N_*-1}^{k-1}\sum_{s=0}^l\int_{\mathbb{R}\times \mathbb{R}^3}\langle u_\mp\rangle^{2\omega}\langle u_\pm\rangle^{\omega}|\nabla z_\mp^{(s)}|^2|\nabla z_\pm^{(l-s)}|^2\\
		&\stackrel{\eqref{eq:I1case1}}{\lesssim}\big(C_1\big)^4\varepsilon^4+\Big(\underbrace{\sum_{l=N_*-1}^{k-1}\sum_{s=0}^{N_*-2}}_{\mathbf{I_{11}}}+\underbrace{\sum_{l=N_*-1}^{k-1}\sum_{s=N_*-1}^l}_{\mathbf{I_{12}}}\Big)\int_{\mathbb{R}\times \mathbb{R}^3}\langle u_\mp\rangle^{2\omega}\langle u_\pm\rangle^{\omega}|\nabla z_\mp^{(s)}|^2|\nabla z_\pm^{(l-s)}|^2.\stepcounter{equation}\tag{\theequation}\label{eq:I1case2-1}
	\end{align*}
For $\mathbf{I_{11}}$, one can still afford the order of derivatives by using an argument similar to \eqref{eq:I1case1}, which shows that 
\begin{align*}
\mathbf{I_{11}}
	&\lesssim \sum_{l=N_*-1}^{k-1}\sum_{s=0}^{N_*-2}\int_{\mathbb{R}}\big\|\langle u_\pm\rangle^{\omega}\nabla z_\mp^{(s)}\big\|^2_{L^\infty(\mathbb{R}^3)}\int_{\mathbb{R}^3}\frac{\langle u_\mp\rangle^{2\omega}}{\langle u_\pm\rangle^{\omega}}\big|\nabla z_\pm^{(l-s)}\big|^2\\
	&\stackrel{\eqref{eq:Sobolev},\eqref{eq:flux}}{\lesssim}\big(C_1\big)^4\varepsilon^4.\stepcounter{equation}\tag{\theequation}\label{eq:I11}
	\end{align*}
For $\mathbf{I_{12}}$, the argument above will lose control (since the order of derivative term $\nabla z_\mp^{(s)}$ with $s=N_*-1$ goes beyond the highest order of derivatives allowed in Lemma \ref{lemma:Sobolev}). 
Luckily, using H\"older inequality leads us to 
\begin{align*}	
	\mathbf{I_{12}}
	&=\sum_{l=N_*-1}^{k-1}\int_{\mathbb{R}}\int_{\mathbb{R}^3}\langle u_\pm\rangle^{2\omega}|\nabla z_\mp^{(N_*-1)}|^2\cdot \frac{\langle u_\mp\rangle^{2\omega}}{\langle u_\pm\rangle^{\omega}}|\nabla z_\pm^{(l+1-N_*)}|^2\\
	&\stackrel{\text{H\"older}}{\lesssim}\sum_{l=N_*-1}^{k-1}\big\|\langle u_\pm\rangle^{\omega}\nabla z_\mp^{(N_*-1)}\big\|^2_{L^\infty_\tau L^2_x}\Big\|\frac{\langle u_\mp\rangle^{\omega}}{\langle u_\pm\rangle^{\frac{\omega}{2}}}\nabla z_\pm^{(l+1-N_*)}\Big\|^2_{L^2_\tau L^\infty_x}.
\end{align*}
On the one hand, by virtue of Remark \ref{rmk:divcurl} and \eqref{Bootstrap on energy}, it holds that  
\[\big\|\langle u_\pm\rangle^{\omega}\nabla z_\mp^{(N_*-1)}\big\|^2_{L^\infty_\tau L^2_x}=\sup_{t\in[0,t^*]}\big\|\langle u_\pm\rangle^{\omega}\nabla z_\mp^{(N_*-1)}\big\|_{L^2_x}\stackrel{\eqref{eq:z to j inequality},\eqref{Bootstrap on energy}}{\lesssim}C_1\varepsilon.\]
On the other hand, for $l= N_*-1$ or $l=N_*$, the standard Sobolev inequality on $\mathbb{R}^3$ (for $x$) gives rise to
\begin{align*}
	\Big\|\frac{\langle u_{\mp}\rangle^{\omega}}{\langle u_{\pm}\rangle^{\frac{\omega}{2}}}
	\nabla z^{(l+1-N_*)}_{\pm} \Big\|_{L^2_\tau L^\infty_x}
	&\lesssim\Big\|\frac{\langle u_{\mp}\rangle^{\omega}}{\langle u_{\pm}\rangle^{\frac{\omega}{2}}}
	\nabla z^{(l+1-N_*)}_{\pm} \Big\|_{L^2_\tau H^2_x}\\
	& \lesssim
	\Big\|\frac{\langle u_{\mp}\rangle^{\omega}}{\langle u_{\pm}\rangle^{\frac{\omega}{2}}}
	\nabla z^{(l+1-N_*)}_{\pm} \Big\|_{L^2_\tau L^2_x}
	+\Big\|\nabla^2\Big(\frac{\langle u_{\mp}\rangle^{\omega}}{\langle u_{\pm}\rangle^{\frac{\omega}{2}}}\nabla z^{(l+1-N_*)}_{\pm} \Big)
	\Big\|_{L^2_\tau L^2_x}\\
	&\stackrel{\eqref{differentiate weights coro}}{\lesssim}
	\sum_{m=l+1-N_*}^{l+3-N_*}\Big\|\frac{\langle u_{\mp}\rangle^{\omega}}{\langle u_{\pm}\rangle^{\frac{\omega}{2}}}
	\nabla z^{(m)}_{\pm}\Big\|_{L^2_\tau L^2_x}\\
	&\stackrel{\eqref{eq:flux}}{\lesssim}C_1\varepsilon.
\end{align*}
Therefore we have
\begin{equation}\label{eq:I12}
	\mathbf{I_{12}}\lesssim\big(C_1\big)^4\varepsilon^4.
\end{equation}
Plugging \eqref{eq:I11} and \eqref{eq:I12} back into \eqref{eq:I1case2-1}, we obtain 
\begin{equation}\label{eq:I1case2-2}
\mathbf{I_{1}}\lesssim\big(C_1\big)^4\varepsilon^4.
\end{equation}

\smallskip

According to \eqref{eq:I}, \eqref{eq:I2}, \eqref{eq:I1case1} and \eqref{eq:I1case2-2}, we can conclude for $k=1,2,...,N_*+1$ that 
\[\mathbf{I}\lesssim\big(C_1\big)^4\varepsilon^4.\]
The proof of this lemma is now complete. 
\end{proof}

\medskip

Indeed, what we have proved in Lemma \ref{lemma:pressureL2L2} and Lemma \ref{lemma:pressureL2L2-H} can be summarized as follows:
\begin{lemma}\label{lemma:pressureL2L2-all} For $l=1,2,...,N_*+2$, we have the following space-time estimates on pressure:
\begin{equation}\label{eq:pressureL2L2-all}
	\int_{\mathbb{R}\times \mathbb{R}^3} \langle u_{\mp}\rangle^{2\omega}\langle u_{\pm}\rangle^\omega |\nabla^l p |^2\lesssim\big(C_1\big)^4\varepsilon^4.
\end{equation}
\end{lemma}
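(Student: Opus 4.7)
The statement is essentially a bookkeeping consolidation of the two preceding lemmas, so the plan is to argue that their combined ranges already cover every $l$ from $1$ to $N_*+2$.

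First, I would handle the small values of $l$. For $l = 1, 2, 3$, Lemma \ref{lemma:pressureL2L2} already gives the desired bound $\int_{\mathbb{R}\times \mathbb{R}^3} \langle u_{\mp}\rangle^{2\omega}\langle u_{\pm}\rangle^\omega |\nabla^l p|^2 \lesssim (C_1)^4 \varepsilon^4$ directly, with no further work required.

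Next, I would reindex Lemma \ref{lemma:pressureL2L2-H} to cover the higher-order range. That lemma controls $\int_{\mathbb{R}\times \mathbb{R}^3} \langle u_{\mp}\rangle^{2\omega}\langle u_{\pm}\rangle^\omega |\nabla^k \nabla p|^2$ for $k = 1, 2, \ldots, N_*+1$; setting $l = k + 1$, this is precisely the bound on $\int_{\mathbb{R}\times \mathbb{R}^3} \langle u_{\mp}\rangle^{2\omega}\langle u_{\pm}\rangle^\omega |\nabla^l p|^2$ for $l = 2, 3, \ldots, N_*+2$.

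Taking the union of the two ranges $\{1,2,3\}$ and $\{2,3,\ldots, N_*+2\}$ (which is nonempty since $N_* \geqslant 5$, so the two ranges overlap and exhaust $\{1,2,\ldots,N_*+2\}$), the claimed bound \eqref{eq:pressureL2L2-all} follows at once. There is essentially no obstacle here: the whole point of stating Lemma \ref{lemma:pressureL2L2-all} separately is organizational, consolidating the two distinct arguments (one using the explicit Newtonian-potential decomposition of $\nabla^l p$ from Lemma \ref{lemma:pressure} for low $l$, and one using the weighted div-curl identity $\operatorname{div} \nabla p = -\partial_i z_-^j \partial_j z_+^i$, $\operatorname{curl} \nabla p = 0$ together with Corollary \ref{coro:divcurl} for high $l$) into a single uniform statement that will be invoked in subsequent energy estimates.
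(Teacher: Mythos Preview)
Your proposal is correct and matches the paper's approach exactly: the paper itself presents Lemma \ref{lemma:pressureL2L2-all} with no independent proof, introducing it simply as a summary of what was established in Lemma \ref{lemma:pressureL2L2} (covering $l=1,2,3$) and Lemma \ref{lemma:pressureL2L2-H} (covering $l=2,\ldots,N_*+2$ after the reindexing $l=k+1$).
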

Together with Lemma \ref{lemma:pressureL2L2-all}, the analysis which gave rise to Lemma \ref{lemma:pressureL2Linfty} can be repeated to yield: 
\begin{lemma}\label{lemma:pressureL2Linfty-all} For $l=1,2,...,N_*$, we have the following $L_\tau^2L_x^\infty$ estimates on pressure:
	\begin{equation}\label{eq:pressureL2Linfty-all}
		\int_{\mathbb{R}}\big\|\langle u_\mp\rangle^{\omega}\langle u_\pm\rangle^{\frac{\omega}{2}}\nabla^l p\big\|^2_{L^\infty_{x}}d\tau\lesssim\big(C_1\big)^4\varepsilon^4.
	\end{equation}
\end{lemma}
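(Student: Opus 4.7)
The plan is to repeat the argument used for Lemma \ref{lemma:pressureL2Linfty} verbatim, simply replacing $\nabla p$ by $\nabla^l p$ and invoking the strengthened space-time bound Lemma \ref{lemma:pressureL2L2-all} in place of Lemma \ref{lemma:pressureL2L2}. More precisely, for each fixed $l\in\{1,2,\dots,N_*\}$ and each fixed $\tau$, I would apply the standard three-dimensional Sobolev embedding $H^2(\mathbb{R}^3)\hookrightarrow L^\infty(\mathbb{R}^3)$ to the vector-valued function $x\mapsto \langle u_\mp\rangle^{\omega}\langle u_\pm\rangle^{\omega/2}\nabla^l p(\tau,x)$, which yields
\[\big\|\langle u_\mp\rangle^{\omega}\langle u_\pm\rangle^{\frac{\omega}{2}}\nabla^l p\big\|_{L^\infty_x}^2\lesssim \big\|\langle u_\mp\rangle^{\omega}\langle u_\pm\rangle^{\frac{\omega}{2}}\nabla^l p\big\|_{L^2_x}^2+\big\|\nabla^2\bigl(\langle u_\mp\rangle^{\omega}\langle u_\pm\rangle^{\frac{\omega}{2}}\nabla^l p\bigr)\big\|_{L^2_x}^2.\]

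Next I would expand the second term by Leibniz and move the spatial derivatives off the weight using \eqref{differentiate weights coro}, which says that up to two derivatives applied to $\langle u_\mp\rangle^{\omega}\langle u_\pm\rangle^{\omega/2}$ are controlled by the weight itself. This reduces the right-hand side to a finite sum
\[\big\|\langle u_\mp\rangle^{\omega}\langle u_\pm\rangle^{\frac{\omega}{2}}\nabla^l p\big\|_{L^\infty_x}^2\lesssim\sum_{m=l}^{l+2}\big\|\langle u_\mp\rangle^{\omega}\langle u_\pm\rangle^{\frac{\omega}{2}}\nabla^m p\big\|_{L^2_x}^2.\]
Integrating in $\tau$ then gives
\[\int_{\mathbb{R}}\big\|\langle u_\mp\rangle^{\omega}\langle u_\pm\rangle^{\frac{\omega}{2}}\nabla^l p\big\|_{L^\infty_x}^2 d\tau\lesssim\sum_{m=l}^{l+2}\int_{\mathbb{R}\times\mathbb{R}^3}\langle u_\mp\rangle^{2\omega}\langle u_\pm\rangle^{\omega}|\nabla^m p|^2.\]

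The last step is to apply Lemma \ref{lemma:pressureL2L2-all} to each summand. Since $l\leq N_*$ we have $l+2\leq N_*+2$, so every index $m\in\{l,l+1,l+2\}$ lies in the allowed range $\{1,\dots,N_*+2\}$ of that lemma, and each term is bounded by $(C_1)^4\varepsilon^4$. Summing over the three values of $m$ yields \eqref{eq:pressureL2Linfty-all}. No genuine obstacle arises beyond bookkeeping: the only thing to verify is the arithmetic of derivative counts, namely that the Sobolev embedding costs exactly two extra derivatives and that this still falls within the range secured by Lemma \ref{lemma:pressureL2L2-all}, which is why the assertion is stated only up to $l=N_*$ rather than $l=N_*+1$ or $N_*+2$.
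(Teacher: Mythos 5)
Your proposal is correct and matches what the paper does: the paper states this lemma as an immediate consequence, remarking that ``the analysis which gave rise to Lemma \ref{lemma:pressureL2Linfty} can be repeated,'' using Lemma \ref{lemma:pressureL2L2-all} in place of Lemma \ref{lemma:pressureL2L2}. Your write-up is in fact slightly more careful than the paper's sketch of Lemma \ref{lemma:pressureL2Linfty} (you keep the intermediate $m=l+1$ term explicitly), and the derivative count $l+2\leqslant N_*+2$ is checked correctly.
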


\medskip

With these preliminary results to keep in mind, we are now ready to pursue the three goals \eqref{Bootstrap on fluctuation improved}-\eqref{Bootstrap on energy improved}. 

\bigskip

\section{Closure of bootstrap argument and Main energy estimates}\label{sec:energy}

Let us achieve these three goals \eqref{Bootstrap on fluctuation improved}-\eqref{Bootstrap on energy improved} one by one in this section. 

\medskip

\subsection{Bootstrap on the amplitude of fluctuation}

Based on Lemma \ref{lemma:Sobolev} (Weighted Sobolev inequality), it is now trivial to improve the amplitude ansatz \eqref{Bootstrap on fluctuation} as follows: 
\[\|z_\pm\|_{L^\infty}\stackrel{\eqref{eq:Sobolev}}{\leqslant}\frac{C_1'\varepsilon}{\langle u_{\pm}\rangle^\omega}\stackrel{\eqref{weight}}{\leqslant}\frac{C_1'\varepsilon}{R^{1+\delta}}\stackrel{\varepsilon\ll 1}{\leqslant}\frac{1}{4},\]
where $C_1'$ is a universal constant and in the last step  $\varepsilon$ is taken sufficiently small. This improves the ansatz \eqref{Bootstrap on fluctuation} to the goal \eqref{Bootstrap on fluctuation improved}. 

\medskip

\subsection{Bootstrap on the underlying geometry}
Now we improve the ansatz \eqref{Bootstrap on geometry} to the goal \eqref{Bootstrap on geometry improved} by using the language of flow.
Recall that the flow $\psi_{\pm}(t,y)$ can be given by \eqref{flow in integration}. 
Denote the differential of $\psi_{\pm}(t,y)$ at $y$ by $\frac{\partial\psi_{\pm}(t,y)}{\partial y}$. 
Hence we obtain 
\begin{equation}\label{flow differential}
	\frac{\partial \psi_\pm(t,y)}{\partial y}=\mathrm{I}+\int_0^t(\nabla z_\pm)(\tau,\psi_\pm(\tau,y))\frac{\partial\psi_\pm(\tau,y)}{\partial y}d\tau.
\end{equation}

\bigskip
For one thing, a simple manipulation from \eqref{flow differential} yields
\begin{equation*}
	\Big|\frac{\partial\psi_\pm(t,y)}{\partial y}-\mathrm{I}\Big|\leqslant\int_0^t\Big(\big|(\nabla z_\pm)(\tau,\psi_\pm(\tau,y))\big|\Big|\frac{\partial\psi_\pm(\tau,y)}{\partial y}-\mathrm{I}\Big|+\big|(\nabla z_\pm)(\tau,\psi_\pm(\tau,y))\big|\Big)d\tau,
\end{equation*}
and moreover differentiating both sides of this inequality gives
\begin{equation*}
\frac{d}{dt}\Big|\frac{\partial\psi_\pm(t,y)}{\partial y}-\mathrm{I}\Big|\leqslant\big|(\nabla z_\pm)(t,\psi_\pm(t,y))\big|\Big|\frac{\partial\psi_\pm(t,y)}{\partial y}-\mathrm{I}\Big|+\big|(\nabla z_\pm)(t,\psi_\pm(t,y))\big|.
\end{equation*}
Thanks to Gronwall's inequality, we acquire
\begin{align*}
	\Big|\frac{\partial\psi_\pm(t,y)}{\partial y}-\mathrm{I}\Big|&\leqslant\exp\Big(\int_0^t\big|(\nabla z_\pm)(\tau,\psi_\pm(\tau,y))\big|d\tau\Big)\underbrace{\int_0^t\big|(\nabla z_\pm)(\tau,\psi_\pm(\tau,y))\big|d\tau}_{A_\pm}=e^{A_\pm}A_\pm.\stepcounter{equation}\tag{\theequation}\label{Apm definition}
\end{align*}
To bound the integration $A_\pm$, we first notice that \eqref{eq:Sobolev} leads to 
\begin{align*}
	|\nabla z_\pm(\tau,\psi_\pm(\tau,y))|\lesssim\frac{\varepsilon}{\langle u_\mp\rangle^\omega\big|_{x=\psi_\pm(\tau,y)}}.
\end{align*}
Then, noting that $L_\pm u_\pm=0$ and $\partial_t\psi_\pm(\tau,y)=Z_\pm(\tau,\psi_\pm(\tau,y))$, we can calculate the Jacobian as follows:
\begin{align*}
	\frac{d}{d\tau}u_\mp(\tau,\psi_\pm(\tau,y))
	&=(\partial_t u_\mp)(\tau,\psi_\pm(\tau,y))+\partial_t\psi_\pm(\tau,y)\cdot(\nabla u_\mp)(\tau,\psi_\pm(\tau,y))\\
	&=(Z_\pm-Z_\mp)(\tau,\psi_\pm(\tau,y))\cdot(\nabla u_\mp)(\tau,\psi_\pm(\tau,y))\\
	&=(z_\pm-z_\mp\pm 2B_0)(\tau,\psi_\pm(\tau,y))\cdot(\nabla u_\mp)(\tau,\psi_\pm(\tau,y))\\
	&=\pm2(\partial_3 u_\mp)\big|_{x=\psi_\pm(\tau,y)}+\big((z_\pm-z_\mp)\cdot(\nabla u_\mp)\big)\big|_{x=\psi_\pm(\tau,y)}.
\end{align*}
Together with $u_\mp=x_3^\mp$,  \eqref{Bootstrap on fluctuation} and \eqref{Bootstrap on geometry}, it is clear to infer that 
\begin{align*}
	\Big|\frac{d}{d\tau}u_\mp(\tau,\psi_\pm(\tau,y))\Big|
	&\geqslant\Big|\pm2(\partial_3 u_\mp)\big|_{x=\psi_\pm(\tau,y)}\Big|_{\min}+\Big|z_\pm\cdot(\nabla u_\mp)\big|_{x=\psi_\pm(\tau,y)}\Big|_{\min}
	+\Big|-z_\mp\cdot(\nabla u_\mp)\big|_{x=\psi_\pm(\tau,y)}\Big|_{\min}\\
	&=2(1-2C_0\varepsilon)-\frac{1}{2}(1+2C_0\varepsilon)-\frac{1}{2}(1+2C_0\varepsilon)=1-6C_0\varepsilon.
\end{align*}
By taking $\varepsilon$ sufficiently small, we further obtain
\[\Big|	\frac{d}{d\tau}u_\mp(\tau,\psi_\pm(\tau,y))\Big|\geqslant\frac{1}{2}.\]
Thus we can switch the variable $\tau$ to $u_\mp$ in the integration $A_\pm$ to get 
\begin{align*}
	A_\pm
	&\lesssim\int_0^t\frac{\varepsilon}{\langle u_\mp\rangle^\omega\big|_{x=\psi_\pm(\tau,y)}}d\tau\\
	&\lesssim \varepsilon\sup_{\tau}\frac{1}{\big|\frac{d}{d\tau}u_\mp(\tau,\psi_\pm(\tau,y))\big|}\int_{\mathbb{R}} \frac{1}{\langle u_\mp\rangle^\omega}du_\mp\\
	&\lesssim \varepsilon.\stepcounter{equation}\tag{\theequation}\label{Apm bound}
\end{align*}
In the last step, we have used the fact that $\displaystyle \int_{\mathbb{R}}\frac{1}{\langle u_\mp\rangle^\omega}du_\mp$ is bounded by a universal constant. According to \eqref{Apm definition} and \eqref{Apm bound}, we can derive 
\begin{equation}\label{part 1}
	\Big|\frac{\partial \psi_{\pm}(t,y)}{\partial y}-\mathrm{I}\Big|\leqslant e^{A_\pm}A_\pm\lesssim \varepsilon.
\end{equation}

\bigskip

For another thing, we apply $\partial_k$ (with $k=1,2,3$) to \eqref{flow differential} and then use the chain rule to get 
\begin{equation*}
	\partial_k\Big(\frac{\partial\psi_\pm(t,y)}{\partial x}\Big)=\int_0^t\bigg((\nabla z_\pm)(\tau,\psi_\pm(\tau,y))\partial_k\Big(\frac{\partial\psi_\pm(\tau,y)}{\partial y}\Big)+\partial_k\big((\nabla z_\pm)(\tau,\psi_\pm(\tau,y))\big)\Big(\frac{\partial\psi_\pm(\tau,y)}{\partial y}\Big)\bigg)d\tau,
\end{equation*}
which implies
\begin{align*}
	\frac{d}{dt}\Big|\partial_k\Big(\frac{\partial\psi_\pm(t,y)}{\partial x}\Big)\Big|&\leqslant\big|(\nabla z_\pm)(\tau,\psi_\pm(\tau,y))\big|\Big|\partial_k\Big(\frac{\partial\psi_\pm(\tau,y)}{\partial y}\Big)\Big|+\big|\partial_k\big((\nabla z_\pm)(\tau,\psi_\pm(\tau,y))\big)\big|\Big|\frac{\partial\psi_\pm(\tau,y)}{\partial y}\Big|.
\end{align*}
Collecting this inequality and Gronwall's inequality, we see that
\begin{align*}
	\Big|\partial_k\Big(\frac{\partial\psi_\pm(t,y)}{\partial y}\Big)\Big|
	&\leqslant\exp\Big(\int_0^t\big|(\nabla z_\pm)(\tau,\psi_\pm(\tau,y))\big|d\tau\Big)\int_0^t\big|\partial_k\big((\nabla z_\pm)(\tau,\psi_\pm(\tau,y))\big)\big|\Big|\frac{\partial\psi_\pm(\tau,y)}{\partial y}\Big|d\tau\\
	&\leqslant\exp\Big(\int_0^t\big|(\nabla z_\pm)(\tau,\psi_\pm(\tau,y))\big|d\tau\Big)\int_0^t\big|(\nabla^2 z_\pm)(\tau,\psi_\pm(\tau,y))\big|\Big|\frac{\partial\psi_\pm(\tau,y)}{\partial y}\Big|^2d\tau\\
	&\stackrel{\eqref{part 1}}{\lesssim}\exp\Big(\int_0^t\big|(\nabla z_\pm)(\tau,\psi_\pm(\tau,y))\big|d\tau\Big)
	\underbrace{\int_0^t\big|(\nabla^2 z_\pm)(\tau,\psi_\pm(\tau,y))\big|d\tau}_{B_\pm}=e^{A_\pm}B_\pm.\stepcounter{equation}\tag{\theequation}\label{Bpm definition}
\end{align*}
By virtue of \eqref{eq:Sobolev},  we also have
\begin{equation*}
	|\nabla^2 z_\pm(\tau,\psi_+(\tau,y))|\lesssim\frac{\varepsilon}{\langle u_\mp\rangle^\omega\big|_{x=\psi_\pm(\tau,y)}}.
\end{equation*}
We then repeat the procedure of the estimates on $A_\pm$ to obtain 
\begin{equation}\label{Bpm bound}
	B_\pm\lesssim\varepsilon.
\end{equation}
According to \eqref{Apm bound}, \eqref{Bpm definition} and \eqref{Bpm bound}, by taking $\varepsilon$ sufficiently small, we can derive
\begin{equation}\label{part 2}
	\Big|\nabla_y\Big(\frac{\partial \psi_{\pm}(t,y)}{\partial y}\Big)\Big|\lesssim  \varepsilon.
\end{equation}

\bigskip

Up to now, we have improved the geometry ansatz \eqref{Bootstrap on geometry} to \eqref{part 1} and \eqref{part 2} in the language of flow, i.e. 
\begin{equation}\label{part}
	\Big|\frac{\partial \psi_{\pm}(t,y)}{\partial y}-\mathrm{I}_{3\times3}\Big|\leqslant  C_0' \varepsilon,\ \ \ \ 	\Big|\nabla_y\Big(\frac{\partial \psi_{\pm}(t,y)}{\partial y}\Big)\Big|\leqslant C_0' \varepsilon,
\end{equation}
where $C_0'$ is a universal constant. By definition,
we know that $x^\pm(t,\psi_\pm(t,y))=y$, which leads us to 
\begin{equation*}
	\frac{\partial x^\pm}{\partial x}\Big|_{x=\psi_\pm(t,y)}=\Big(\frac{\partial\psi_{\pm}(t,y)}{\partial y}\Big)^{-1}, \ \ \ \   \nabla_x\Big(\frac{\partial x^\pm}{\partial x}\Big)\Big|_{x=\psi_\pm(t,y)}=\nabla_y\Big\{\Big(\frac{\partial\psi_{\pm}(t,y)}{\partial y}\Big)^{-1}\Big\}\Big(\frac{\partial\psi_{\pm}(t,y)}{\partial y}\Big)^{-1}.
\end{equation*}
Now we turn to rephrase \eqref{part}. 
For the first part, in view of series expansion, it follows that 
\begin{align*}
	\frac{\partial x^\pm}{\partial x}\Big|_{x=\psi_\pm(t,y)}
	&=\Big(\mathrm{I}-\Big(\mathrm{I}-\Big(\frac{\partial x^\pm}{\partial x}\Big|_{x=\psi_\pm(t,y)}\Big)^{-1}\Big)\Big)^{-1}\\
	&=\sum_{k=0}^\infty\Big(\mathrm{I}-\Big(\frac{\partial x^\pm}{\partial x}\Big|_{x=\psi_\pm(t,y)}\Big)^{-1}\Big)^k\\
	&=\mathrm{I}+\sum_{k=1}^\infty\Big(\mathrm{I}-\frac{\partial\psi_{\pm}(t,y)}{\partial y}\Big)^k,
\end{align*}
and hence we obtain
\begin{align*}
	\Big|\frac{\partial x^\pm}{\partial x}\Big|_{x=\psi_\pm(t,y)}-\mathrm{I}\Big|
	&\leqslant\sum_{k=1}^\infty\Big|\mathrm{I}-\frac{\partial\psi_{\pm}(t,y)}{\partial y}\Big|^k\\
	&=\Big|\mathrm{I}-\frac{\partial\psi_{\pm}(t,y)}{\partial y}\Big|\cdot \sum_{k=1}^\infty\Big|\mathrm{I}-\frac{\partial\psi_{\pm}(t,y)}{\partial y}\Big|^{k-1}\\
	&=\Big|\mathrm{I}-\frac{\partial\psi_{\pm}(t,y)}{\partial y}\Big|\cdot\frac{1}{1-\Big|\mathrm{I}-\frac{\partial\psi_{\pm}(t,y)}{\partial y}\Big|}\\
	&\stackrel{\eqref{part}}{\leqslant} \frac{C_0'\varepsilon}{1-C_0'\varepsilon}\\
	&\leqslant C_0'\varepsilon.\stepcounter{equation}\tag{\theequation}\label{part 1'}
\end{align*}
For the second part,  combining the chain rule and \eqref{part} as well as \eqref{part 1'} gives the estimate
\begin{align*}
\Big|\nabla_x\Big(\frac{\partial x^\pm}{\partial x}\Big)\Big|_{x=\psi_\pm(t,y)}\Big|
&=\Big|\nabla_y\Big\{\Big(\frac{\partial\psi_{\pm}(t,y)}{\partial y}\Big)^{-1}\Big\}\Big(\frac{\partial\psi_{\pm}(t,y)}{\partial y}\Big)^{-1}\Big|\\
&\leqslant\Big|\nabla_y\Big(\frac{\partial\psi_{\pm}(t,y)}{\partial y}\Big)\Big| \Big|\Big(\frac{\partial\psi_{\pm}(t,y)}{\partial y}\Big)^{-3}\Big|\\
&\leqslant\Big|\nabla_y\Big(\frac{\partial\psi_{\pm}(t,y)}{\partial y}\Big)\Big| \Big|\frac{\partial x^\pm}{\partial x}\Big|_{x=\psi_\pm(t,y)}\Big|^{3}\\
&\leqslant C_0'\varepsilon(1+C_0'\varepsilon)^3\\
&\leqslant C_0''\varepsilon,\stepcounter{equation}\tag{\theequation}\label{part 2'}
\end{align*}
where $C_0''$ is a universal constant. 
Taking $C_0\geqslant\max\{C_0',C_0''\}$, we can  summarize \eqref{part 1'} and \eqref{part 2'} as 
\[	\Big|\frac{\partial x^\pm}{\partial x}\Big|_{x=\psi_\pm(t,y)}-\mathrm{I}\Big|\leqslant  C_0 \varepsilon,\ \ \ \  \Big|\nabla_x\Big(\frac{\partial x^\pm}{\partial x}\Big)\Big|_{x=\psi_\pm(t,y)}\Big|\leqslant  C_0 \varepsilon,\]
which yields \eqref{Bootstrap on geometry improved} immediately.

\medskip

\subsection{Bootstrap on the total energy bound}

The current subsection is devoted to deriving energy estimates and pursuing the energy goal \eqref{Bootstrap on energy improved}.  
The proof is divided into three steps. 

\bigskip

The first step is to derive the lowest order energy estimates, i.e. control the size of $E_\pm$ and $F_\pm$.
Taking
\[f_{\pm}=z_{\pm},\ \ \ \  \rho_{\pm}=-\nabla p,\ \ \ \  \lambda_{\pm}=\langle u_{\mp}\rangle^{2\omega},\]
we can apply \eqref{eq:linear EE} to \eqref{eq:MHD} and then get 
\begin{equation}\label{eq:LOW}
	\sum_{+,-}E_{\pm}(t)+\sum_{+,-}F_{\pm}(t)\lesssim \sum_{+,-}E_{\pm}(0)+\sum_{+,-}\underbrace{\int_{0}^{t}\int_{\Sigma_{\tau}}\langle u_{\mp}\rangle^{2\omega}|z_{\pm}| |\nabla p|}_{\mathbf{J}}.
\end{equation}
Thanks to H\"older inequality, it follows that
\begin{align*}
	\mathbf{J} &\leqslant \Big(\int_{0}^{t}\int_{\Sigma_{\tau}}\frac{\langle u_{\mp}\rangle^{2\omega}}{\langle u_{\pm}\rangle^{\omega}}|z_{\pm}|^2\Big)^{\frac{1}{2}}
	\Big(\int_{0}^{t}\int_{\Sigma_{\tau}}\langle u_{\mp}\rangle^{2\omega}\langle u_{\pm}\rangle^\omega |\nabla p|^2 \Big)^{\frac{1}{2}}\\
	&\stackrel{\eqref{eq:flux},\eqref{eq:pressureL2L2-all}}{\lesssim}\big(C_1\big)^3 \varepsilon^3.
\end{align*}
Putting the above equality into \eqref{eq:LOW}, we can take supremum over all $t\in [0,t^*]$ to acquire
\begin{equation}\label{eq:LOWEE}
	\sum_{+,-}E_{\pm}+\sum_{+,-}F_{\pm}\lesssim  \sum_{+,-}E_{\pm}(0)+\big(C_1\big)^3\varepsilon^3.
\end{equation}

\bigskip
The second step is devoted to the higher order energy estimates.
For a given multi-index $\beta$ with $0\leqslant|\beta|\leqslant N_*$, we first commute $\partial^{\beta}$ derivatives with the vorticity equations in \eqref{eq:curlMHD-2} to obtain
\begin{equation}\begin{cases}\label{eq:DcurlMHD-2}
		\partial_t j^{(\beta)}_{+}+Z_-\cdot\nabla j^{(\beta)}_{+}&=\rho^{(\beta)}_{+},\\
		\partial_t j^{(\beta)}_{-}+Z_+\cdot\nabla j^{(\beta)}_{-}&=\rho^{(\beta)}_{-},
\end{cases}\end{equation}
where source terms $\rho_\pm^{(\beta)}$ are given by
\begin{equation*}
	\rho_\pm^{(\beta)}=-\partial^\beta(\nabla z_\mp\wedge\nabla z_\pm)-[\partial^\beta,z_\mp\cdot\nabla] j_\pm.
\end{equation*}
Subsequently, applying \eqref{eq:linear EE} to \eqref{eq:DcurlMHD-2} with the weight functions $\lambda_{\pm}=\langle u_{\mp}\rangle^{2\omega}$ leads us to 
\begin{equation}\label{eq:HIGH}
\sum_{+,-}	E^{(\beta)}_{\pm}(t)+\sum_{+,-}F^{(\beta)}_{\pm}(t)\lesssim \sum_{+,-}E^{(\beta)}_{\pm}(0) +\sum_{+,-}\underbrace{\int_{0}^{t}\int_{\Sigma_{\tau}}\langle u_{\mp}\rangle^{2\omega}\big|j^{(\beta)}_{\pm}\big|\big|\rho^{(\beta)}_{\pm}}_{\mathbf{K}}\big|.
\end{equation}
By virtue of H\"older inequality, we have
\begin{align*}
	\mathbf{K} &\leqslant \Big(\int_{0}^{t}\int_{\Sigma_{\tau}}\frac{\langle u_{\mp}\rangle^{2\omega}}{\langle u_{\pm}\rangle^{\omega}}\big|j^{(\beta)}_{\pm}\big|^2\Big)^{\frac{1}{2}}
	\Big(\int_{0}^{t}\int_{\Sigma_{\tau}}\langle u_{\mp}\rangle^{2\omega}\langle u_{\pm}\rangle^\omega \big|\rho^{(\beta)}_\pm\big|^2 \Big)^{\frac{1}{2}}\\
	&\stackrel{\eqref{eq:flux}}{\lesssim}C_1 \varepsilon\Big(\underbrace{\int_{0}^{t}\int_{\Sigma_{\tau}}\langle u_{\mp}\rangle^{2\omega}\langle u_{\pm}\rangle^\omega \big|\rho^{(\beta)}_\pm\big|^2 }_{\mathbf{K'}}\Big)^{\frac{1}{2}}.
\end{align*}
We note that the source term $\rho_\pm^{(\beta)}$ in $\eqref{eq:DcurlMHD-2}$ can be bounded by
\begin{equation*}
	\big|\rho_\pm^{(\beta)}\big|\leqslant\sum_{\gamma\leqslant\beta} C_\beta^\gamma\big|\nabla z_\mp^{(\gamma)}\big|\big|\nabla z_\pm^{(\beta-\gamma)}\big|
	+\sum_{0\neq\gamma\leqslant\beta} C_\beta^\gamma\big|z_\mp^{(\gamma)}\big|\big|\nabla j_\pm^{(\beta-\gamma)}\big|
	\lesssim\sum_{s\leqslant|\beta|} \big|\nabla z_\mp^{(s)}\big|\big|\nabla z_\pm^{(|\beta|-s)}\big|.
\end{equation*}
As a consequence, we can use the bound on $\mathbf{I_1}$ in \eqref{eq:I1sumI2} to derive 
\begin{equation*}
	\mathbf{K'}\lesssim\sum_{s\leqslant|\beta|}\int_{\mathbb{R}}\int_{\Sigma_{\tau}}\langle u_{\mp}\rangle^{2\omega}\langle u_{\pm}\rangle^\omega\big|\nabla z^{(s)}_{\mp} \big|^2\big|\nabla z^{(|\beta|-s)}_{\pm} \big|^2\lesssim\mathbf{I_1}\lesssim\big(C_1\big)^4\varepsilon^4.
\end{equation*}
Hence we obtain 
\begin{equation*}
	\mathbf{K}\lesssim \big(C_1\big)^3\varepsilon^3.
\end{equation*}
Together with \eqref{eq:HIGH},  for all $t\in [0,t^*]$ and for all $\beta$ with $0\leqslant|\beta|\leqslant N_*$, it then follows that 
\begin{equation}\label{eq:H}
	\sum_{+,-}E_{\pm}^{(\beta)}(t)+\sum_{+,-}F^{(\beta)}_{\pm}(t)\lesssim\sum_{+,-} E_{\pm}^{(\beta)}(0)+\big(C_1\big)^3\varepsilon^3.
\end{equation}
Summing up \eqref{eq:H} for all $0\leqslant |\beta|\leqslant N_{*}$ and taking  supremum over all  $t\in [0,t^*]$, we can summarize that 
\begin{equation}\label{eq:HIGHEE}
	\sum_{+,-}\sum_{k=0}^{ N_{*}}E_{\pm}^k+\sum_{+,-}\sum_{k=0}^{ N_{*}} F_{\pm}^k\lesssim
  \sum_{+,-}\sum_{k=0}^{ N_{*}}E_{\pm}^k(0)+\big(C_1\big)^3\varepsilon^3.
\end{equation}

\medskip

The last step is to complete the total energy estimates. Putting the estimates \eqref{eq:LOWEE} and \eqref{eq:HIGHEE} together, we can infer from the initial energy \eqref{eq:initial energy} that 
\begin{equation*}
	\sum_{+,-}\bigg(E_{\pm}+F_{\pm}+\sum_{k=0}^{N_{*}}E_{\pm}^k+\sum_{k=0}^{N_{*}}F_{\pm}^k\bigg)
	\leqslant
	C_2\varepsilon^2+C_2\big(C_1\big)^3\varepsilon^3,
\end{equation*}
where $C_2$ is a universal constant.  
We then take $C_1=(2 C_2)^{\frac{1}{2}}$, $\varepsilon_0=\dfrac{1}{(2C_2)^{\frac{3}{2}}}$. Thus, for all $\varepsilon< \varepsilon_0$, the above inequality implies
\begin{equation*}
	\sum_{+,-}\bigg(E_{\pm}+F_{\pm}+\sum_{k=0}^{N_{*}}E_{\pm}^k+\sum_{k=0}^{N_{*}}F_{\pm}^k\bigg)\leqslant \big(C_1\big)^2\varepsilon^2,
\end{equation*}
which is the improved energy estimate \eqref{Bootstrap on energy improved}. 

\bigskip

Up to now, we have derived \eqref{Bootstrap on fluctuation improved}-\eqref{Bootstrap on energy improved} and closed the bootstrap argument. As a result, we can reverse and shift the time to complete the proof of Theorem \ref{thm:global existence}, and the constants $C_0$ and $C_1$ from now on can be treated as universal constants.

\smallskip

\begin{remark}
All the above estimates still hold under \eqref{Bootstrap on fluctuation improved}-\eqref{Bootstrap on energy improved}. This is because we have derived these estimates up to universal constants by using the notation $\lesssim$. Moreover, the exactly numerical constants here are irrelevant to the rest proof in this paper.  
\end{remark}
\smallskip
\begin{remark}
With the global-in-time solution $\big(z_+(t,x),z_-(t,x)\big)$ constructed in Theorem \ref{thm:global existence}, we can extend the time region from
$[0, t^*]$ to $[0,+\infty]$ or $[0,-\infty]$ in all the above estimates from now on. 
\end{remark}

\smallskip

\section{Passage to the scattering fields of Alfv\'en waves}\label{sec:proof}

We are now in a position to construct scattering fields for Alfv\'en waves and provide proofs for Theorem \ref{thm:existence}, Theorem \ref{thm:weightSobolev} and Theorem \ref{thm:deviation}. 
By the symmetry of time, it suffices to consider the future scattering fields $z_+(+\infty;x_1^-,x_2^-,u_-)$ on $\mathcal{F}_+$ and $z_-(+\infty;x_1^+,x_2^+,u_+)$ on $\mathcal{F}_-$. 

\medskip

\subsection{Construction of the scattering fields at infinities}

Given a point $(x_1^-,x_2^-,u_-) \in \mathcal{F}_+$ and
a point $(x_1^+,x_2^+,u_+) \in \mathcal{F}_-$, for the solution $\big(z_+(t,x),z_-(t,x)\big)$ constructed in the previous section, we  show that the future scattering fields $\big(z_+(+\infty;x_1^-,x_2^-,u_-), z_-(+\infty;x_1^+,x_2^+,u_+)\big)$ given by 
\begin{equation}\label{eq:def scattering}
z_\pm(+\infty;x_1^\mp,x_2^\mp,u_\mp):=z_\pm(0,x_1^\mp,x_2^\mp,u_\mp)-\int_0^{+\infty} \big(\nabla p\cdot\sqrt{1+|z_\mp\mp B_0|^2}\big)(\tau,x_1^\mp,x_2^\mp,u_\mp)d\tau
\end{equation}
are well-defined. 

\smallskip

In fact, according to Lemma \ref{lemma:pressure derivative bound}, we have 
\begin{equation*}
	|\nabla p(\tau,x_1^\mp,x_2^\mp,u_\mp)|\lesssim \frac{\varepsilon^2}{(1+\tau)^\omega}.
\end{equation*}
By \eqref{Bootstrap on fluctuation}, for sufficiently small $\varepsilon$, there exists some constant $C$ such that  $|z_\mp|\leqslant C\varepsilon$. It follows that
\[\sqrt{2}-C\varepsilon\leqslant\sqrt{1+|z_\mp\mp B_0|^2}\leqslant\sqrt{2}+C\varepsilon.\]
We remark here that the constant $C$ appeared in the last line might be taken as different value from above. 
In other words, there holds 
\begin{equation}\label{eq:measure}
	\sqrt{1+|z_\mp\mp B_0|^2}(\tau,x_1^\mp,x_2^\mp,u_\mp)=\sqrt{2}+O(\varepsilon).
\end{equation}
Hence we obtain 
\begin{equation*}
	|\big(
	\nabla p\cdot\sqrt{1+|z_\mp\mp B_0|^2}\big)(\tau,x_1^\mp,x_2^\mp,u_\mp)|\lesssim\frac{\varepsilon^2}{(1+\tau)^\omega}\in L_\tau^1(\mathbb{R}).
\end{equation*}
Together with the Lebesgue dominated convergence theorem, it implies that the integrals in the definition of $z_+(+\infty;x_1^\mp,x_2^\mp,u_\mp)$ in \eqref{eq:def scattering} converge.  Therefore, the scattering fields $z_+(+\infty;x_1^-,x_2^-,u_-)$ on $\mathcal{F}_+$ and $z_-(+\infty;x_1^+,x_2^+,u_+)$ on $\mathcal{F}_-$ are point-wisely well-defined by \eqref{eq:def scattering}. 
The proof of Theorem \ref{thm:existence} is completed.

\medskip

\subsection{Construction of the weighted energy spaces at infinities}

To begin with, we define on $\mathcal{F}_\pm$ their corresponding weighted Sobolev norms and weighted Sobolev spaces as follows: for any vector field $f$ on $\mathcal{F}_\pm$, setting the weighted measure 
$\langle u_\mp \rangle^{2\omega} d\mu_\mp = \langle u_\mp \rangle^{2\omega} dx_1 dx_2 d u_\mp$
on $\mathcal{F}_\pm$ leads us to the definition of the $L^2$-type space $L^2(\mathcal{F}_\pm,\langle u_\mp \rangle^\omega d\mu_\mp)$.  

\smallskip

Let us also recall that we can use the following five coordinate systems on $\mathbb{R}\times \mathbb{R}^3$: the Cartesian coordinates $(t,x_1,x_2,x_3)$, two characteristic coordinates $(t,x_1^-,x_2^-,u_-)$ and $(t,x_1^+,x_2^+,u_+)$, the double characteristic coordinates $(x_1^-,x_2^-,u_-,u_+)$ and $(x_1^+,x_2^+,u_+,u_-)$. 
We shall use this observation repeatedly in the following proof without further comment.

\medskip

We are now turning to the proof of Theorem \ref{thm:weightSobolev}, namely the key property of the scattering fields that they live in the Sobolev spaces based on the above measures. It suffices to show the following proposition. 

\begin{proposition}\label{prop:Sobolev}For all multi-indices $\beta$ with $0\leqslant\left|\beta\right|\leqslant N_*+1$, we have
\[\nabla^\beta z_{\pm}(+\infty;x_1^\mp,x_2^\mp,u_\mp)\in L^{2}(\mathcal{F}_\pm,\langle u_\mp \rangle^{2\omega} d\mu_{\mp}).\]
\end{proposition}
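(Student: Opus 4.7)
The plan is to differentiate the integral formula \eqref{eq:def scattering} in the characteristic coordinates $y=(x_1^\mp,x_2^\mp,u_\mp)$ on $\mathcal{F}_\pm$, which are held constant along the line $l_\mp(y)$. Setting $G(\tau,y):=(\nabla p\cdot\sqrt{1+|z_\mp\mp B_0|^2})(\tau,l_\mp(y))$, dominated convergence based on Lemma \ref{lemma:pressure derivative bound} justifies the exchange
\[\nabla_y^\beta z_\pm(+\infty;y)=\nabla_y^\beta z_\pm(0;y)-\int_0^\infty\nabla_y^\beta G(\tau,y)\,d\tau.\]
The initial data term lies in $L^2(\mathcal{F}_\pm,\langle u_\mp\rangle^{2\omega}d\mu_\mp)$ by the assumption on $z_\pm(0,\cdot)$, since at $\tau=0$ the characteristic and Cartesian coordinates on $\Sigma_0$ coincide. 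The task reduces to bounding the time-integral term in this norm.

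The central device is a Cauchy-Schwarz in $\tau$ along $l_\mp(y)$ that converts the pressure weight $\langle u_\pm\rangle^\omega$ into $\tau$-integrability. Using $L_\pm u_\pm=0$, the amplitude bound $|z_\pm|\lesssim\varepsilon$, and the improved geometry ansatz \eqref{Bootstrap on geometry improved}, one finds $L_\mp u_\pm=\mp 2+O(\varepsilon)$, so $\tau\mapsto u_\pm(\tau,l_\mp(y))$ is a near-linear bijection starting from $u_\pm(0,l_\mp(y))=u_\mp^0$. The substitution $v=u_\pm(\tau,l_\mp(y))$ yields
\[\int_0^\infty\langle u_\pm(\tau,l_\mp(y))\rangle^{-\omega}\,d\tau\lesssim\int_{\mathbb{R}}(R^2+v^2)^{-\omega/2}\,dv\lesssim 1\]
uniformly in $y$, using $\omega>1$. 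Applying Cauchy-Schwarz with this weight, squaring, multiplying by $\langle u_\mp\rangle^{2\omega}$ (which is constant along $l_\mp(y)$), integrating over $\mathcal{F}_\pm$, swapping order by Fubini, and at each fixed $\tau$ changing variables from $y$ to Cartesian coordinates on $\Sigma_\tau$ via $\psi_\mp(\tau,\cdot)$ (Jacobian close to $1$ by \eqref{Bootstrap on geometry improved}) reduces matters to bounding
\[\int_0^\infty\int_{\Sigma_\tau}\langle u_\mp\rangle^{2\omega}\langle u_\pm\rangle^\omega|\nabla^\beta G|^2\,dx\,d\tau,\]
where $\nabla^\beta$ is now a bounded linear combination of Cartesian derivatives by \eqref{Bootstrap on geometry improved}.

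Expanding $\nabla^\beta G$ by Leibniz produces terms of the form $\nabla^{|\gamma|+1}p\cdot\nabla^\sigma\sqrt{1+|z_\mp\mp B_0|^2}$ with $|\gamma|+|\sigma|\leq|\beta|\leq N_*+1$, which I split into two regimes based on which factor carries the high derivatives. When $|\sigma|\leq N_*-2$, Lemma \ref{lemma:Sobolev} gives the pointwise bound $|\nabla^\sigma\sqrt{1+|z_\mp\mp B_0|^2}|\lesssim 1$ (with an extra $\varepsilon/\langle u_\pm\rangle^\omega$ gain when $|\sigma|\geq 1$), so the spacetime integral is controlled by $\int\langle u_\mp\rangle^{2\omega}\langle u_\pm\rangle^\omega|\nabla^{|\gamma|+1}p|^2\lesssim\varepsilon^4$ from Lemma \ref{lemma:pressureL2L2-all}, valid since $|\gamma|+1\leq N_*+2$. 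When $|\sigma|\geq N_*-1$, the derivative count forces $|\gamma|+1\leq 3$, so I invoke the $L^2_\tau L^\infty_x$ pressure estimate of Lemma \ref{lemma:pressureL2Linfty-all} with weight $\langle u_\mp\rangle^\omega\langle u_\pm\rangle^{\omega/2}$ for the pressure factor; the remaining $L^2_x$ norm of $\nabla^\sigma\sqrt{1+|z_\mp\mp B_0|^2}$ is bounded uniformly in $\tau$ by Faa di Bruno combined with the energy bound on $\nabla^{\sigma'}z_\mp$ up to order $N_*+1$, since $N_*\geq 5$ forces each Leibniz product to contain a factor of order $\leq N_*-2$ that is pointwise bounded by Sobolev.

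The heart of the argument, and its principal obstacle, is reconciling the scattering weight $\langle u_\mp\rangle^{2\omega}$ with the asymmetric pressure weight $\langle u_\mp\rangle^{2\omega}\langle u_\pm\rangle^\omega$. The Cauchy-Schwarz step accomplishes this precisely: the separation property of Lemma \ref{lemma:separation weight}, combined with the transversality $L_\mp u_\pm\approx\mp 2$, makes $\langle u_\pm(\tau,l_\mp(y))\rangle^{-\omega}$ uniformly $\tau$-integrable, and this conversion of a characteristic weight into $\tau$-decay is exactly what permits the pressure estimates to close the bound.
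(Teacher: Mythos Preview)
Your overall mechanism---Cauchy--Schwarz in $\tau$ against $\langle u_\pm\rangle^{-\omega}$ to reduce to the spacetime pressure integral, then Lemmas~\ref{lemma:pressureL2L2-all} and~\ref{lemma:pressureL2Linfty-all}---matches the paper's. But two steps are not justified. First, the dominated convergence you invoke to commute $\nabla_y^\beta$ with $\int_0^\infty$ cannot come from Lemma~\ref{lemma:pressure derivative bound} when $|\beta|\geq 3$: that lemma only gives pointwise decay for $\nabla^l p$ with $l\leq 3$, whereas $\nabla_y^\beta G$ contains $\nabla^{|\beta|+1}p$ with $|\beta|+1$ as large as $N_*+2$, and no pointwise bound for such high-order pressure derivatives is available (the near-field argument of Lemma~\ref{lemma:pressure} does not extend, as it would require $L^\infty$ control of $\nabla^{N_*+1}z_\pm$). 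The paper sidesteps this by establishing the commutation only in $L^2(\mathcal{F}_\pm,\langle u_\mp\rangle^{2\omega}d\mu_\mp)$, via an induction on $|\beta|$ (Claim~\ref{lemma:claim}) whose inductive step (Lemma~\ref{lemma:claim-proof2}) is proved distributionally through Fubini and the already-known $L^2$ bounds.

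Second, your assertion that after the coordinate change ``$\nabla^\beta$ is now a bounded linear combination of Cartesian derivatives by \eqref{Bootstrap on geometry improved}'' fails for $|\beta|\geq 3$: the chain rule expressing $\nabla_y^\beta$ in terms of $\nabla_x$ involves derivatives of the flow map $\psi_\mp(\tau,\cdot)$ up to order $|\beta|$, while the geometry ansatz bounds only $\partial\psi_\mp/\partial y-\mathrm{I}$ and its \emph{first} derivative. The paper's formula \eqref{eq:Lambda-def} makes these Jacobian contributions explicit as $\nabla^l\bigl(\mathrm{I}+\int_0^\infty\nabla z_\mp\,d\tau\bigr)$, and the resulting cross terms (the $\mathbf{Q_{22}}$ block in the proof) require a separate case analysis: for small $l$ via Sobolev in $L^\infty$, and for $l\geq N_*-1$ only in $L^2_x$ through the energy bound, with the now low-order pressure factor placed in $L^2_\tau L^\infty_x$ via Lemma~\ref{lemma:pressureL2Linfty-all}. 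Your Leibniz expansion of $\nabla^\beta G$ omits these Jacobian terms entirely.
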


\begin{proof}
The proof is divided into two steps. The first step deals with the case where $|\beta|=0$. The second step deals with the cases with $1\leqslant|\beta|\leqslant N_*+1$. 

\medskip
\noindent
\textbf{Step 1: } We show that $z_\pm(+\infty;x_1^\mp,x_2^\mp,u_\mp)\in L^2(\mathcal{F}_\pm,\langle u_- \rangle^{2\omega} d\mu_\mp)$.

\medskip
\noindent
By definition, we have
\begin{align*}
	& \ \ \ \int_{\mathcal{F}_+}|z_\pm(+\infty;x_1^\mp,x_2^\mp,u_\mp)|^2\langle u_\mp\rangle^{2\omega}d\mu_\mp\\
	&=\int_{\mathbb{R}^3}\Big|z_\pm(0,x_1^\mp,x_2^\mp,u_\mp)-\int_0^{+\infty}\big(\nabla p\cdot\sqrt{1+|z_\mp\mp B_0|^2}\big)(\tau,x_1^\mp,x_2^\mp,u_\mp)d\tau\Big|^2 \langle u_\mp\rangle^{2\omega}dx_1^\mp dx_2^\mp du_\mp\\
	&\lesssim\underbrace{\int_{\mathbb{R}^3}|z_\pm(0,x_1^\mp,x_2^\mp,u_\mp)|^2\langle u_\mp\rangle^{2\omega}dx_1^\mp dx_2^\mp du_\mp}_{\mathbf{P_1}}\\
	&\ \ \ \ 
	+\underbrace{\int_{\mathbb{R}^3} \Big|\int_{0}^{+\infty}\big(\nabla p\cdot\sqrt{1+|z_\mp\mp B_0|^2}\big)(\tau,x_1^\mp,x_2^\mp,u_\mp)d\tau\Big|^2\langle u_\mp\rangle^{2\omega}dx_1^\mp dx_2^\mp du_\mp}_{\mathbf{P_2}}.
\end{align*}
\smallskip
For $\mathbf{P_1}$, according to the initial energy \eqref{eq:initial energy}, we have
\[\mathbf{P_1}= \int_{\mathbb{R}^{3}}\langle u_{\mp}\rangle^{2\omega}|z_{\pm}(0,x)|^2dx=E_\pm(0)\leqslant \varepsilon^2.\]
\smallskip
For  $\mathbf{P_2}$, using H\"older inequality and Lemma \ref{lemma:pressureL2L2-all}, we can derive
\begin{align*}
	\mathbf{P_2}
	&\leqslant\int_{\mathbb{R}^3}\Big|\int_{\mathbb{R}}\big|\big(\nabla p\cdot\sqrt{1+|z_\mp\mp B_0|^2}\big)(x_1^\mp,x_2^\mp,u_\mp,u_\pm)\big|du_\pm\Big|^2 \langle u_\mp\rangle^{2\omega}dx_1^\mp dx_2^\mp du_\mp\\
	&\stackrel{\text{H\"older}}{\lesssim}\!\!\!\int_{\mathbb{R}^3} \Big(\int_{\mathbb{R}}\frac{1}{\langle u_{\pm}\rangle^{\omega}}du_\pm\Big)\Big(\int_{\mathbb{R}}\langle u_+\rangle^\omega\big|\big(\nabla p\cdot\sqrt{1+|z_\mp\mp B_0|^2}\big)(x_1^\mp,x_2^\mp,u_\mp,u_\pm)\big|^2du_\pm\Big)\langle u_\mp\rangle^{2\omega}dx_1^\mp dx_2^\mp du_\mp\\
    &\lesssim\int_{\mathbb{R}\times\mathbb{R}^3}\langle u_{\mp}\rangle^{2\omega}\langle u_\pm\rangle^\omega\big|\big(\nabla p\cdot\sqrt{1+|z_\mp\mp B_0|^2}\big)(x_1^\mp,x_2^\mp,u_\mp,u_\pm)\big|^2dx_1^\mp dx_2^\mp du_\mp du_\pm\\
    &\lesssim\int_{\mathbb{R}\times \mathbb{R}^3}\langle u_\mp\rangle^{2\omega}\langle u_\pm\rangle^\omega\big|\nabla p\cdot\sqrt{1+|z_\mp\mp B_0|^2}\big|^2 dx_1dx_2dx_3d\tau\\
    &\stackrel{\eqref{eq:measure}}{\lesssim}\int_{\mathbb{R}\times \mathbb{R}^3}\langle u_\mp\rangle^{2\omega}\langle u_\pm\rangle^\omega|\nabla p|^2 dx_1dx_2dx_3d\tau\\
    &\stackrel{\eqref{eq:pressureL2L2-all}}{\lesssim}\varepsilon^4.
\end{align*}
Putting the estimates on $\mathbf{P_1}$ and $\mathbf{P_2}$ together, we have proved that $z_\pm(+\infty;x_1^\mp,x_2^\mp,u_\mp)\in L^2(\mathcal{F}_\pm,\langle u_\mp \rangle^\omega d\mu_\mp)$. 

\bigskip

\noindent
\textbf{Step 2: } We show that  
$(\nabla^{\beta}z_\pm)(+\infty;x_1^\mp,x_2^\mp,u_\mp)\in L^2(\mathcal{F}_\pm,\langle u_\mp \rangle^{2\omega} d\mu_\mp)$ for all multi-indices $\beta$ with $1\leqslant|\beta|\leqslant N_*+1$.

By definition, we have
\begin{align*}
&\ \ \ \
\int_{\mathcal{F}_\pm} \big|\nabla^{\beta}z_{\pm}(+\infty;x_1^\mp,x_2^\mp,u_\mp)\big|^2\langle u_{\mp}\rangle^{2\omega}d\mu_\mp\\
&=\int_{\mathcal{F}_\pm}\Big|\nabla^{\beta}z_\pm(0,x_1^\mp,x_2^\mp,u_\mp)-\nabla^{\beta}\Big(\int_0^{+\infty}\big(\nabla p\cdot\sqrt{1+|z_\mp\mp B_0|^2}\big)(\tau,x_1^\mp,x_2^\mp,u_\mp)d\tau\Big) \Big|^2\langle u_{\mp}\rangle^{2\omega}d\mu_\mp\\
&\lesssim \underbrace{\int_{\mathbb{R}^3} \big|\nabla^{\beta}z_\pm(0,x_1^\mp,x_2^\mp,u_\mp)\big|^2\langle u_{\mp}\rangle^{2\omega}dx_1^\mp dx_2^\mp du_\mp}_{\mathbf{Q_1}}\\
& \ \ \ +\underbrace{\int_{\mathbb{R}^3} \Big|\nabla^{\beta}\Big(\int_0^{+\infty}\big(\nabla p\cdot\sqrt{1+|z_\mp\mp B_0|^2}\big)(\tau,x_1^\mp,x_2^\mp,u_\mp)d\tau\Big)\Big|^2\langle u_{\mp}\rangle^{2\omega}dx_1^\mp dx_2^\mp du_\mp}_{\mathbf{Q_2}}.
\end{align*}
\smallskip
On the one hand, we can bound $\mathbf{Q_1}$ by the initial energy \eqref{eq:initial energy}:
\begin{align*}
\mathbf{Q_1}&\lesssim \int_{\mathbb{R}^{3}}\langle u_{\mp}\rangle^{2\omega}\big|\nabla^{\beta} z_{\pm}(0,x_1,x_2,x_3)\big|^2dx_1 dx_2 dx_3\\
&\stackrel{\text{Remark \ref{rmk:divcurl}}}{\lesssim} \int_{\mathbb{R}^{3}}\langle u_{\mp}\rangle^{2\omega}\big| z_{\pm}(0,x_1,x_2,x_3)\big|^2dx_1 dx_2 dx_3+\sum_{l=0}^{|\beta|-1}\int_{\mathbb{R}^{3}}\langle u_{\mp}\rangle^{2\omega}\big| j^{(l)}_{\pm}(0,x_1,x_2,x_3)\big|^2dx_1 dx_2 dx_3\\
&\leqslant E_\pm(0)+\sum_{l=0}^{|\beta|-1}E^{(l)}_\pm(0)\leqslant \varepsilon^2.\stepcounter{equation}\tag{\theequation}\label{eq:Q1}
\end{align*}
\smallskip
On the other hand, we are able to bound $\mathbf{Q_2}$ with the following claim in hand:
\begin{claim}\label{lemma:claim}
	For all multi-indices $\beta$ with $1\leqslant\left|\beta\right|\leqslant N_*+1$, we have the following formal expression:
	\begin{equation}\label{eq:Lambda}
		\nabla^{\beta}\Big(\int_0^{+\infty}\big(\nabla p\cdot\sqrt{1+|z_\mp\mp B_0|^2}\big)(\tau,x_1^\mp,x_2^\mp,u_\mp)d\tau\Big)
		\stackrel{L^2(\mathcal{F}_\pm,\langle u_{\mp}\rangle^{2\omega} d\mu_\mp)}{=\joinrel=\joinrel=\joinrel=}
		\int_0^{+\infty}\mathbf{R}_{\mp}^{(\beta)}(\tau,x_1^\mp,x_2^\mp,u_\mp)d\tau,
	\end{equation}
	where
	\begin{align*}
		\mathbf{R}_{\mp}^{(\beta)}(\tau,x_1^\mp,x_2^\mp,u_\mp)&:=\sum_{l=0}^{\beta-1}C_{\beta-1}^l\Big(\sum_{k=0}^{\beta-l}C_{\beta-l}^k\nabla^{k+1} p\cdot\nabla^{\beta-l-k} \sqrt{1+|z_\mp\mp B_0|^2}\Big)(\tau,x_1^\mp,x_2^\mp,u_\mp)\\
		&\ \ \ \ \ \ \times\nabla^l\Big(\mathrm{I}+ \int_0^{+\infty}\nabla z_\mp(\tau,x_1^\mp,x_2^\mp,u_\mp)d\tau\Big).\stepcounter{equation}\tag{\theequation}\label{eq:Lambda-def}
	\end{align*}
We remark here and in the sequel that on the left hand side of the equation such as \eqref{eq:Lambda}, $\nabla$ is defined with respect to the coordinate system $(x_1^\mp,x_2^\mp,u_\mp)$ on $\mathcal{F}_\pm$, while on the right hand side of the equation such as \eqref{eq:Lambda}, $\nabla$ is defined with respect to the coordinate system $(t, x_1^\mp,x_2^\mp,u_\mp)$ on $\mathbb{R}\times \mathbb{R}^3$ (except for the last term in \eqref{eq:Lambda-def}, where $\nabla$ is defined with respect to the coordinate system $(x_1^\mp,x_2^\mp,u_\mp)$ on $\mathcal{F}_\pm$ if it exists, i.e. $l\neq 0$).
\end{claim}

\smallskip
\noindent The proof of Claim \ref{lemma:claim} may be of independent interest and importance to the whole proof and will be given with a full discussion in the next subsection. 
For clarity in the presentation, we assume the validity of this claim first and proceed to estimate $\mathbf{Q_2}$.

\bigskip

In a spiritual sense, the initial idea of the estimate on $\mathbf{Q_2}$ is similar to that used for $\mathbf{P_2}$ above. 
Considering the technical difficulties in this part, we prefer to retain the details of our proof as follows:
\begin{align*}
	\mathbf{Q_2} &\stackrel{\eqref{eq:Lambda}}{=}\int_{\mathbb{R}^3}\bigg|\int_0^{+\infty}\mathbf{R}_{\mp}^{(\beta)}(\tau,x_1^\mp,x_2^\mp,u_\mp) d\tau\bigg|^2\langle u_\mp\rangle^{2\omega}dx_1^\mp dx_2^\mp du_\mp\\
	&\stackrel{\eqref{eq:Lambda-def}}{\leqslant}\int_{\mathbb{R}^3}\bigg|\int_{\mathbb{R}}\sum_{l=0}^{\beta-1}C_{\beta-1}^l\Big(\sum_{k=0}^{\beta-l}C_{\beta-l}^k\nabla^{k+1} p\cdot\nabla^{\beta-l-k} \sqrt{1+|z_\mp\mp B_0|^2}\Big)(x_1^\mp,x_2^\mp,u_\mp,u_\pm)\\
	&\ \ \ \ \ \ \ \ \ \ \ \ \ \ \ \ \ \ \ \ \ \ \ \ \ \ \ \ \ \ \ \ \  \times\nabla^l\Big(\mathrm{I}+ \int_0^{+\infty}\nabla z_\mp(x_1^\mp,x_2^\mp,u_\mp,u_\pm)du_\pm\Big)du_\pm\bigg|^2\langle u_\mp\rangle^{2\omega}dx_1^\mp dx_2^\mp du_\mp\\
	&\stackrel{\text{H\"older}}{\lesssim}\!\!\!\int_{\mathbb{R}^3} \Big(\int_{\mathbb{R}}\frac{1}{\langle u_\pm\rangle^{\omega}}du_\pm\Big)\Big(\int_{\mathbb{R}}\langle u_\pm\rangle^\omega\Big|\sum_{l=0}^{\beta-1}C_{\beta-1}^l\Big(\sum_{k=0}^{\beta-l}C_{\beta-l}^k\nabla^{k+1} p\cdot\nabla^{\beta-l-k} \sqrt{1+|z_\mp\mp B_0|^2}\Big)(x_1^\mp,x_2^\mp,u_\mp,u_\pm)\\
	&\ \ \ \ \ \ \ \ \ \ \ \ \ \ \ \ \ \ \ \ \ \ \ \ \ \ \ \ \ \ \ \ \  \times \nabla^l\Big(\mathrm{I}+ \int_{\mathbb{R}}\nabla z_\mp(x_1^\mp,x_2^\mp,u_\mp,u_\pm)du_\pm\Big)\Big|^2 du_\pm\Big) \langle u_\mp\rangle^{2\omega}dx_1^\mp dx_2^\mp du_\mp\\
	&\lesssim
	\int_{\mathbb{R}	\times\mathbb{R}^3}\langle u_\mp\rangle^{2\omega}\langle u_\pm\rangle^\omega\Big|\sum_{l=0}^{\beta-1}C_{\beta-1}^l\Big(\sum_{k=0}^{\beta-l}C_{\beta-l}^k\nabla^{k+1} p\cdot\nabla^{\beta-l-k} \sqrt{1+|z_\mp\mp B_0|^2}\Big)(x_1^\mp,x_2^\mp,u_\mp,u_\pm)\\
	&\ \ \ \ \ \ \ \ \ \ \ \ \ \ \ \ \ \ \ \ \ \ \ \ \ \ \ \ \ \ \ \ \ \times  \nabla^l\Big(\mathrm{I}+\int_{\mathbb{R}}\nabla z_\mp(x_1^\mp,x_2^\mp,u_\mp,u_\pm)du_\pm\Big)\Big|^2 dx_1^\mp dx_2^\mp du_\mp du_\pm\\
	&\lesssim
	\int_{\mathbb{R}\times\mathbb{R}^3}\langle u_\mp\rangle^{2\omega}\langle u_\pm\rangle^\omega\Big|\sum_{l=0}^{\beta-1}C_{\beta-1}^l\Big(\sum_{k=0}^{\beta-l}C_{\beta-l}^k\nabla^{k+1} p\cdot\nabla^{\beta-l-k} \sqrt{1+|z_\mp\mp B_0|^2}\Big)(\tau,x_1,x_2,x_3)\\
	&\ \ \ \ \ \ \ \ \ \ \ \ \ \ \ \ \ \ \ \ \ \ \ \ \ \ \ \ \ \ \ \ \ \times  \nabla^l\Big(\mathrm{I}+ \int_{\mathbb{R}}\nabla z_\mp(\tau,x_1,x_2,x_3)d\tau\Big)\Big|^2 dx_1dx_2dx_3d\tau.\stepcounter{equation}\tag{\theequation}\label{eq:Q2-intro}
\end{align*}
We observe here in passing that several constants in \eqref{eq:Q2-intro} can be omitted. Therefore it follows that 
\begin{align*}
	\mathbf{Q_2}
	&\lesssim
	\int_{\mathbb{R}\times\mathbb{R}^3}\langle u_\mp\rangle^{2\omega}\langle u_\pm\rangle^\omega\Big|\sum_{l=0}^{\beta-1}\Big(\sum_{k=0}^{\beta-l}\nabla^{k+1}p\cdot\nabla^{\beta-l-k}z_\mp\Big)(\tau,x_1,x_2,x_3)\cdot \nabla^l\Big(\mathrm{I}+ \int_{\mathbb{R}}\nabla z_\mp(\tau,x_1,x_2,x_3)d\tau\Big)\Big|^2\!\! dxd\tau\\
	&=
	\int_{\mathbb{R}	\times\mathbb{R}^3}\langle u_\mp\rangle^{2\omega}\langle u_\pm\rangle^\omega\Big|\Big(\sum_{k=0}^{\beta}\nabla^{k+1} p\cdot\nabla^{\beta-k}z_\mp\Big)(\tau,x_1,x_2,x_3)\cdot \Big(\mathrm{I}+ \int_{\mathbb{R}}\nabla z_\mp(\tau,x_1,x_2,x_3)d\tau\Big)\Big|^2\! dxd\tau\\
	&\ \ \ \ + \int_{\mathbb{R}\times\mathbb{R}^3}\langle u_\mp\rangle^{2\omega}\langle u_\pm\rangle^\omega\Big|\sum_{l=1}^{\beta-1}\Big(\sum_{k=0}^{\beta-l}\nabla^{k+1}p\cdot\nabla^{\beta-l-k}z_\mp\Big)(\tau,x_1,x_2,x_3)\!\cdot  \Big(\int_{\mathbb{R}}\nabla^{l+1} z_\mp(\tau,x_1,x_2,x_3)d\tau\Big)\Big|^2\! dxd\tau\\
	&= \int_{\mathbb{R}	\times\mathbb{R}^3}\langle u_\mp\rangle^{2\omega}\langle u_\pm\rangle^\omega\Big|\Big(\sum_{k=0}^{\beta}\nabla^{k+1} p\cdot\nabla^{\beta-k}z_\mp\Big)(\tau,x_1,x_2,x_3)\Big|^2\!dxd\tau\\
	&\ \ \ \  +\int_{\mathbb{R}\times\mathbb{R}^3}\langle u_\mp\rangle^{2\omega}\langle u_\pm\rangle^\omega\Big|\Big(\sum_{k=0}^{\beta}\nabla^{k+1} p\cdot\nabla^{\beta-k}z_\mp\Big)(\tau,x_1,x_2,x_3)\cdot \Big( \int_{\mathbb{R}}\nabla z_\mp(\tau,x_1,x_2,x_3)d\tau\Big)\Big|^2\! dxd\tau\\
	&\ \ \ \ + \int_{\mathbb{R}\times\mathbb{R}^3}\langle u_\mp\rangle^{2\omega}\langle u_\pm\rangle^\omega\Big|\sum_{l=1}^{\beta-1}\Big(\sum_{k=0}^{\beta-l}\nabla^{k+1}p\cdot\nabla^{\beta-l-k}z_\mp\Big)(\tau,x_1,x_2,x_3)\!\cdot \Big(\int_{\mathbb{R}}\nabla^{l+1} z_\mp(\tau,x_1,x_2,x_3)d\tau\!\Big)\Big|^2\! dxd\tau\\
	&= \int_{\mathbb{R}	\times\mathbb{R}^3}\langle u_\mp\rangle^{2\omega}\langle u_\pm\rangle^\omega\Big|\Big(\sum_{k=0}^{\beta}\nabla^{k+1}p\cdot\nabla^{\beta-k}z_\mp\Big)(\tau,x_1,x_2,x_3)\Big|^2\! dxd\tau\\
	&\ \ \ \ + \int_{\mathbb{R}\times\mathbb{R}^3}\langle u_\mp\rangle^{2\omega}\langle u_\pm\rangle^\omega\Big|\sum_{l=0}^{\beta-1}\Big(\sum_{k=0}^{\beta-l}\nabla^{k+1}p\cdot\nabla^{\beta-l-k}z_\mp\Big)(\tau,x_1,x_2,x_3)\cdot  \Big(\int_{\mathbb{R}}\nabla^{l+1} z_\mp(\tau,x_1,x_2,x_3)d\tau\!\Big)\Big|^2\! dxd\tau\\
	&\lesssim\underbrace{\sum_{k=0}^{\beta}\int_{\mathbb{R}\times\mathbb{R}^3}\langle u_\mp\rangle^{2\omega}\langle u_\pm\rangle^\omega|\nabla^{k+1} p|^2\cdot|\nabla^{\beta-k} z_\mp|^2dxd\tau}_{\mathbf{Q_{21}}}\\
	&\ \ \ \  +\underbrace{\sum_{l=0}^{\beta-1}\sum_{k=0}^{\beta-l}\int_{\mathbb{R}	\times\mathbb{R}^3}\langle u_\mp\rangle^{2\omega}\langle u_\pm\rangle^\omega|\nabla^{k+1} p|^2\cdot|\nabla^{\beta-l-k} z_\mp|^2\cdot\Big(\int_{\mathbb{R}}\nabla^{l+1} z_\mp(\tau,x_1,x_2,x_3)d\tau\Big)^2 dxd\tau}_{\mathbf{Q_{22}}}.\stepcounter{equation}\tag{\theequation}\label{eq:Q2-def}
\end{align*}	
Hence the proof of this step can be completed by giving the bounds on  $\mathbf{Q_{21}}$ and $\mathbf{Q_{22}}$ respectively.

\medskip

For $\mathbf{Q_{21}}$, according to the order of derivatives, we distinguish the following two cases:\\
\smallskip
\textbf{Case 1}: $1\leqslant |\beta|\leqslant N_*-1$. By virtue of $\langle u_\pm\rangle^{2\omega}\geqslant 1$, Lemma \ref{lemma:Sobolev} and Lemma \ref{lemma:pressureL2L2-all}, we can  derive 
\begin{align*}
	\mathbf{Q_{21}}
	&\lesssim\sum_{k=0}^{\beta}\int_{\mathbb{R}	\times\mathbb{R}^3}\langle u_\mp\rangle^{2\omega}\langle u_\pm\rangle^\omega|\nabla^{k+1} p|^2\cdot \langle u_\pm\rangle^{2\omega}|\nabla^{\beta-k} z_\mp|^2dxd\tau\\
	&\lesssim\sum_{k=0}^{\beta}\Big\|\langle u_\pm\rangle^{\omega}\nabla^{\beta-k} z_\mp\Big\|^2_{L^\infty_\tau L^\infty_x}\int_{\mathbb{R}\times\mathbb{R}^3}\langle u_\mp\rangle^{2\omega}\langle u_\pm\rangle^\omega|\nabla^{k+1} p|^2dxd\tau\\
	&\stackrel{\eqref{eq:Sobolev},\eqref{eq:pressureL2L2-all}}{\lesssim} \varepsilon^6.\stepcounter{equation}\tag{\theequation}\label{eq:Q21case1}
\end{align*}
\smallskip
\textbf{Case 2}: $|\beta|=N_*\text{ or }N_*+1$. We notice that 
\begin{align*}
	\mathbf{Q_{21}}
	&=\sum_{k=0}^{N_*-1}\!\int_{\mathbb{R}\times\mathbb{R}^3}\!\!\langle u_\mp\rangle^{2\omega}\langle u_\pm\rangle^\omega|\nabla^{k+1} p|^2\cdot|\nabla^{\beta-k} z_\mp|^2 dxd\tau +\!\!\sum_{k=N_*}^{\beta}\!\int_{\mathbb{R}	\times\mathbb{R}^3}\!\!\langle u_\mp\rangle^{2\omega}\langle u_\pm\rangle^\omega|\nabla^{k+1} p|^2\cdot|\nabla^{\beta-k} z_\mp|^2dxd\tau\\
	&\stackrel{\eqref{eq:Q21case1}}{\lesssim}\varepsilon^6+\underbrace{\sum_{k=N_*}^{\beta}\int_{\mathbb{R}	\times\mathbb{R}^3}\langle u_\mp\rangle^{2\omega}\langle u_\pm\rangle^\omega|\nabla^{k+1} p|^2\cdot|\nabla^{\beta-k} z_\mp|^2dxd\tau}_{\mathbf{Q_{21}'}}.
\end{align*}
An entirely analogous argument as \eqref{eq:Q21case1} shows that 
\begin{align*}
	\mathbf{Q_{21}'}
	&\lesssim\sum_{k=N_*}^{\beta}\int_{\mathbb{R}	\times\mathbb{R}^3}\langle u_\mp\rangle^{2\omega}\langle u_\pm\rangle^\omega|\nabla^{k+1} p|^2\cdot \langle u_\pm\rangle^{2\omega}|\nabla^{\beta-k} z_\mp|^2dxd\tau\\
	&\lesssim\sum_{k=N_*}^{\beta}\Big\|\langle u_\pm\rangle^{\omega}\nabla^{\beta-k} z_\mp\Big\|^2_{L^\infty_\tau L^\infty_x}\int_{\mathbb{R}\times\mathbb{R}^3}\langle u_\mp\rangle^{2\omega}\langle u_\pm\rangle^\omega|\nabla^{k+1} p|^2dxd\tau\\
	&\stackrel{\eqref{eq:Sobolev},\eqref{eq:pressureL2L2-all}}{\lesssim} \varepsilon^6,
\end{align*}
and hence $\mathbf{Q_{21}}$ in this case can also be bounded by $\varepsilon^6$ up to a universal constant. In this way, the resulting estimate from the previous two cases is
\begin{equation}\label{eq:Q21}	
\mathbf{Q_{21}}\lesssim\varepsilon^6.
\end{equation}

\medskip

For $\mathbf{Q_{22}}$, as argued above, we also need to consider the following two cases:\\
\smallskip
\textbf{Case 1}: $1\leqslant |\beta|\leqslant N_*-1$. Using $\langle u_\pm\rangle^{2\omega}\geqslant 1$, Lemma \ref{lemma:Sobolev} and Lemma \ref{lemma:pressureL2L2-all} again, we infer that 
\begin{align*}
	\mathbf{Q_{22}}\!
	&\lesssim\sum_{l=0}^{\beta-1}\sum_{k=0}^{\beta-l}\int_{\mathbb{R}	\times\mathbb{R}^3}\langle u_\mp\rangle^{2\omega}\langle u_\pm\rangle^\omega|\nabla^{k+1} p|^2\cdot \langle u_\pm\rangle^{2\omega}|\nabla^{\beta-l-k} z_\mp|^2\cdot\langle u_\pm\rangle^{2\omega} \Big(\int_{\mathbb{R}}\nabla^{l+1} z_\mp(\tau,x_1,x_2,x_3)d\tau\Big)^2dxd\tau\\
	&\lesssim\sum_{l=0}^{\beta-1}\sum_{k=0}^{\beta-l}\int_{\mathbb{R}}\!\bigg(\Big\|\langle u_\pm\rangle^{\omega}\nabla^{\beta-l-k} z_\mp\Big\|^2_{L^\infty_{x}}\Big\|\langle u_\pm\rangle^{\omega}\!\!\int_{\mathbb{R}}\!\nabla^{l+1} z_\mp(\tau,x_1,x_2,x_3)d\tau\Big\|^2_{L^\infty_{x}}\!\int_{\mathbb{R}^3}\!\langle u_\mp\rangle^{2\omega}\langle u_\pm\rangle^\omega|\nabla^{k+1} p|^2dx\bigg)d\tau\\
	&\lesssim\sum_{l=0}^{\beta-1}\sum_{k=0}^{\beta-l}\!\Big\|\langle u_\pm\rangle^{\omega}\nabla^{\beta-l-k} z_\mp\Big\|^2_{L^\infty_\tau L^\infty_x}\!\Big\|\langle u_\pm\rangle^{\omega}\!\!\! \int_{\mathbb{R}}\!\!\nabla^{l+1} z_\mp(\tau,x_1,x_2,x_3)d\tau\Big\|^2_{L^\infty_\tau L^\infty_{x}}\!\!\int_{\mathbb{R}\times\mathbb{R}^3}\!\!\!\langle u_\mp\rangle^{2\omega}\langle u_\pm\rangle^\omega|\nabla^{k+1} p|^2dxd\tau\\
	&\stackrel{\eqref{eq:Sobolev},\eqref{eq:pressureL2L2-all}}{\lesssim} \varepsilon^8.\stepcounter{equation}\tag{\theequation}\label{eq:Q22case1}
\end{align*}
\smallskip
\textbf{Case 2}: $|\beta|=N_*\text{ or }N_*+1$. The same observation as before leaves us with 
\begin{align*}
	\mathbf{Q_{22}}
	&=\sum_{l=0}^{N_*-2}\sum_{k=0}^{N_*-1-l}\int_{\mathbb{R}	\times\mathbb{R}^3}\langle u_\mp\rangle^{2\omega}\langle u_\pm\rangle^\omega|\nabla^{k+1} p|^2\cdot|\nabla^{\beta-l-k} z_\mp|^2\cdot\Big(\int_{\mathbb{R}}\nabla^{l+1} z_\mp(\tau,x_1,x_2,x_3)d\tau\Big)^2 dxd\tau\\
	&\ \ \ \ +\sum_{l=N_*-1}^{\beta-1}\sum_{k=0}^{\beta-l}\int_{\mathbb{R}	\times\mathbb{R}^3}\langle u_\mp\rangle^{2\omega}\langle u_\pm\rangle^\omega|\nabla^{k+1} p|^2\cdot|\nabla^{\beta-l-k} z_\mp|^2\cdot\Big(\int_{\mathbb{R}}\nabla^{l+1} z_\mp(\tau,x_1,x_2,x_3)d\tau\Big)^2 dxd\tau\\
	&\stackrel{\eqref{eq:Q22case1}}{\lesssim}\varepsilon^8+\underbrace{\sum_{l=N_*-1}^{\beta-1}\sum_{k=0}^{\beta-l}\int_{\mathbb{R}	\times\mathbb{R}^3}\langle u_\mp\rangle^{2\omega}\langle u_\pm\rangle^\omega|\nabla^{k+1} p|^2\cdot|\nabla^{\beta-l-k} z_\mp|^2\cdot\Big(\int_{\mathbb{R}}\nabla^{l+1} z_\mp(\tau,x_1,x_2,x_3)d\tau\Big)^2 dxd\tau}_{\mathbf{Q_{22}'}}.
\end{align*}
To deal with the order of derivatives, we can carry out the following estimate on $\mathbf{Q_{22}'}$ by means of the energy ansatz \eqref{Bootstrap on energy} and Lemma \ref{lemma:Sobolev} as well as Lemma \ref{lemma:pressureL2Linfty-all}:
\begin{align*}
	\mathbf{Q_{22}'}\!
	&\lesssim\!\sum_{l=N_*-1}^{\beta-1}\sum_{k=0}^{\beta-l}\int_{\mathbb{R}\times\mathbb{R}^3}\!\langle u_\mp\rangle^{2\omega}\langle u_\pm\rangle^{\omega}|\nabla^{k+1} p|^2\cdot\langle u_\pm\rangle^{2\omega}|\nabla^{\beta-l-k} z_\mp|^2\cdot \langle u_\pm\rangle^{2\omega} \Big(\int_{\mathbb{R}}\!\nabla^{l+1} z_\mp(\tau,x_1,x_2,x_3)d\tau\!\Big)^2\!dxd\tau\\		&\lesssim\!\sum_{l=N_*-1}^{\beta-1}\sum_{k=0}^{\beta-l}\int_{\mathbb{R}}\!\bigg(\Big\|\langle u_\mp\rangle^{\omega}\langle u_\pm\rangle^{\frac{\omega}{2}}\!\nabla^{k+1}p\Big\|^2_{L^\infty_{x}}\!\Big\|\langle u_\pm\rangle^{\omega} \nabla^{\beta-l-k} z_\mp\Big\|^2_{L^\infty_{x}}\!\! \int_{\mathbb{R}^3}\!\!\langle u_\pm\rangle^{2\omega}\!\Big|\!\int_{\mathbb{R}}\!\!\nabla^{l+1} z_\mp(\tau,x_1,x_2,x_3)d\tau\!\Big|^2\!dx\!\bigg)\!d\tau\\
	&\lesssim\!\sum_{l=N_*-1}^{\beta-1}\sum_{k=0}^{\beta-l}\underbrace{\Big\|\langle u_\pm\rangle^{\omega} \nabla^{\beta-l-k} z_\mp\Big\|^2_{L^\infty_\tau L^\infty_{x}}}_{\stackrel{\eqref{eq:Sobolev}}{\lesssim}\varepsilon^2} \underbrace{\bigg\|\int_{\mathbb{R}^3}\langle u_\pm\rangle^{2\omega}\Big|\int_{\mathbb{R}}\nabla^{l+1} z_\mp(\tau,x_1,x_2,x_3)d\tau\Big|^2dx\bigg\|_{L^\infty_\tau}}_{\stackrel{\eqref{Bootstrap on energy}}{\lesssim}\varepsilon^2}\\&\ \ \ \ \ \ \ \ \ \ \ \ \ \ \ \ \ \  \times\underbrace{\int_{\mathbb{R}}\big\|\langle u_\mp\rangle^{\omega}\langle u_\pm\rangle^{\frac{\omega}{2}}\nabla^{k+1} p\big\|^2_{L^\infty_{x}}d\tau}_{\stackrel{\eqref{eq:pressureL2Linfty-all}}{\lesssim}\varepsilon^4}\\
	&\lesssim \varepsilon^8.
\end{align*}
Therefore $\mathbf{Q_{22}}$ is bounded by $\varepsilon^8$ up to a universal constant. To sum up, we always have 
\begin{equation}\label{eq:Q22}	\mathbf{Q_{22}}\lesssim\varepsilon^8.
\end{equation}

\medskip

Finally, adding \eqref{eq:Q21} and \eqref{eq:Q22} up gives the right hand side of \eqref{eq:Q2-def}, leaving 
\begin{equation}\label{eq:Q2}
	\mathbf{Q_2}\lesssim\varepsilon^6.
\end{equation}
Combining \eqref{eq:Q1} and \eqref{eq:Q2}, we can summarize that 
$(\nabla^{\beta}z_\pm)(+\infty;x_1^\mp,x_2^\mp,u_\mp)\in L^2(\mathcal{F}_\pm,\langle u_\mp \rangle^{2\omega} d\mu_\mp)$ for all $1\leqslant|\beta|\leqslant N_*+1$, 
which concludes the proof of \textbf{Step 2}. We have thus proved the proposition. 
\end{proof}

\medskip
\subsection{An appendix to Theorem \ref{thm:weightSobolev}: The proof of Claim \ref{lemma:claim}}

As a missing part in the construction of energy spaces, the proof of Claim \ref{lemma:claim} serves a useful purpose to ensure the commutations between integral and $\nabla^{N_*+1}$ derivatives. The overall strategy for this proof relies on a standard induction on $|\beta|$, which originates from the work \cite{Li-Yu} on commuting the integral and lower order derivatives, and is well adapted to this highest order (also optimal) case by exploiting the pressure estimates. 
As we will see, our method particularly combines both delicate techniques from real analysis and the above energy estimates.

\smallskip

To carry out the induction, we first need to establish \eqref{eq:Lambda} for $|\beta|=1$. We are now in a position to differentiate the integral in the definition \eqref{eq:def scattering} of $z_\pm(+\infty;x_1^\mp,x_2^\mp,u_\mp)$.
\begin{lemma}\label{lemma:claim-proof1}
	For all partial derivatives $D\in \big\{\partial_{x_1^\mp},\partial_{x_2^\mp}, \partial_{u_\mp}\big\}$, we have the following formal expression:
	\begin{align*}
	&\ \ \ \ D\Big(\int_0^{+\infty}\big(\nabla p\cdot\sqrt{1+|z_\mp\mp B_0|^2}\big)(\tau,x_1^\mp,x_2^\mp,u_\mp)d\tau\Big)\\
	&=\int_0^{+\infty}\Big(D(\nabla p)\cdot \sqrt{1+|z_\mp\mp B_0|^2} +\nabla p\cdot D \sqrt{1+|z_\mp\mp B_0|^2}\Big)(\tau,x_1^\mp,x_2^\mp,u_\mp)\\
	&\ \ \ \ \ \ \ \ \times\Big(1+ \int_0^{+\infty} D z_\mp(\tau,x_1^\mp,x_2^\mp,u_\mp)d\tau\Big)d\tau.
	\end{align*}
\end{lemma}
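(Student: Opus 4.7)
The plan is to establish the formula by approximating the improper integral with finite truncations, differentiating these under the integral sign by the chain rule, and then passing to the limit with the aid of the pressure decay estimates from Section \ref{sec:preliminary}.

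First I would fix a truncation $T>0$ and consider $F_T(y_\mp):=\int_0^T(\nabla p\cdot\sqrt{1+|z_\mp\mp B_0|^2})(\tau,y_\mp)\,d\tau$ with $y_\mp=(x_1^\mp,x_2^\mp,u_\mp)$. By the global regularity from Theorem \ref{thm:global existence} and the pointwise bounds of Lemma \ref{lemma:Sobolev} and Lemma \ref{lemma:pressure derivative bound}, the integrand is smooth in $y_\mp$, so classical differentiation under the integral sign applies on $[0,T]$, and the product rule produces the bracketed expression $D(\nabla p)\cdot\sqrt{1+|z_\mp\mp B_0|^2}+\nabla p\cdot D\sqrt{1+|z_\mp\mp B_0|^2}$.

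Next I would unwind the chain rule between characteristic and Cartesian coordinates. Since $(\tau,y_\mp)$ is the characteristic system, any quantity written $f(\tau,y_\mp)$ is implicitly $f(\tau,\psi_\mp(\tau,y_\mp))$, so $D f=\nabla_x f\cdot D\psi_\mp(\tau,y_\mp)$. Differentiating the flow equation \eqref{flow in integration} and using the bootstrap geometric bound \eqref{Bootstrap on geometry improved}, the Jacobian satisfies $D\psi_\mp(\tau,y_\mp)=\mathrm{I}+\int_0^\tau Dz_\mp(\sigma,y_\mp)\,d\sigma$, and its $\tau\to+\infty$ limit $\mathrm{I}+\int_0^{+\infty}Dz_\mp\,d\sigma$ exists because Lemma \ref{lemma:Sobolev} combined with the separation estimate of Lemma \ref{lemma:separation weight} yields $|Dz_\mp(\sigma,y_\mp)|\lesssim\varepsilon/(R+\sigma)^\omega$, which is integrable in $\sigma$.

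Third I would pass $T\to+\infty$ by dominated convergence. The absolute convergence $F_T\to F$ follows from $|\nabla p|\lesssim\varepsilon^2/(R+\tau)^\omega$ from Lemma \ref{lemma:pressure derivative bound} together with $\omega>1$ and the uniform bound $\sqrt{1+|z_\mp\mp B_0|^2}=\sqrt{2}+O(\varepsilon)$ coming from \eqref{Bootstrap on fluctuation improved}. The differentiated integrand inherits the same $\tau$-decay via the bound on $|\nabla^2 p|$ in Lemma \ref{lemma:pressure derivative bound}, the $L^\infty$ control on $|\nabla z_\mp|$ from Lemma \ref{lemma:Sobolev}, and the boundedness of $D\psi_\mp$ from \eqref{Bootstrap on geometry improved}, providing an integrable majorant that justifies $DF=\lim_{T}DF_T$.

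The main obstacle, and the reason the identity is called a ``formal expression'', is reconciling the chain-rule Jacobian $D\psi_\mp(\tau,y_\mp)=\mathrm{I}+\int_0^\tau Dz_\mp\,d\sigma$ that sits naturally inside the $\tau$-integral with the advertised factor $\mathrm{I}+\int_0^{+\infty}Dz_\mp\,d\tau$, which is $\tau$-independent and factors outside. I plan to bridge the two by the splitting $D\psi_\mp(\tau,y_\mp)=\big(\mathrm{I}+\int_0^{+\infty}Dz_\mp\,d\sigma\big)-\int_\tau^{+\infty}Dz_\mp\,d\sigma$ and showing that the tail contribution, paired with the decaying differentiated integrand and integrated in $\tau$, is negligible in the $L^2(\mathcal{F}_\pm,\langle u_\mp\rangle^{2\omega}d\mu_\mp)$ norm in which the identity is ultimately exploited at Claim \ref{lemma:claim}; the residue is then absorbed by the $L^2_\tau L^\infty_x$ pressure estimate of Lemma \ref{lemma:pressureL2Linfty-all}, which is precisely why that refined bound was prepared in advance.
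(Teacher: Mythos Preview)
Your proposal is correct and follows essentially the same route as the paper: both invoke the flow map $\psi_\mp$ to express the chain rule, bound the differentiated integrand by the pointwise pressure decay $|\nabla^l p|\lesssim\varepsilon^2/(R+\tau)^\omega$ from Lemma~\ref{lemma:pressure derivative bound} together with the integrability $\int_0^{+\infty}|\nabla z_\mp|\,d\tau=O(\varepsilon)$ along the $\psi_\mp$-flow (which is exactly the null-structure observation the paper highlights), and then apply dominated convergence. Your truncation-then-limit packaging and the paper's difference-quotient packaging are equivalent ways of organizing the same analysis.

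One remark on your final paragraph: you are being more careful than the paper. After dominated convergence the exact Jacobian factor is $1+\int_0^\tau Dz_\mp\,d\sigma$, and the paper simply writes $1+\int_0^{+\infty}Dz_\mp\,d\sigma$ in its place, labels the result a ``formal expression'', and moves on without justifying the replacement. For the downstream application (the $\mathbf{Q_2}$ bound in Proposition~\ref{prop:Sobolev}) only the \emph{size} of this factor matters, and both versions are $O(1)$ uniformly in $\tau$ by the same $\int_0^{+\infty}|\nabla z_\mp|\,d\tau=O(\varepsilon)$ estimate, so the distinction is immaterial there. Your proposed splitting with an $L^2$ tail estimate would make the stated identity honest rather than formal, but this extra work is not needed for how the lemma is actually consumed; if you want to match the paper's level of rigor, you can simply carry the $\tau$-dependent Jacobian through and note that the subsequent estimates are unchanged.
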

\begin{proof}
It will be the first time in this paper that the coordinate system varying for different situations has really mattered. In addition, it is worth presenting a complete and rigorous proof for this lemma based on the language of flow. Unnecessarily complicated process though it may seem, this proof will help to underline the importance of our treatment on various coordinates and simplify the proofs (such as without using flow maps) for the subsequent lemmas.

\smallskip

For any initial point $y=(y_1,y_2,y_3)\in\mathbb{R}^3$, we recall the flow $\psi_\pm(t,y)=(\psi_\pm^1(t,y),\psi_\pm^2(t,y),\psi_\pm^3(t,y))$ generated by $Z_\pm$ as in \eqref{flow definition}, which can be rewritten in the form analogous to \eqref{definitionforxi}, i.e.
\begin{equation*}
\begin{cases}
		&L_+ \psi_+^i(t,y_1,y_2,y_3)= 0,\\
		&\psi_+^i(t,y_1,y_2,y_3)\big|_{\Sigma_0} = y_i,
	\end{cases}\ \ \ \ \ \ 
	\begin{cases}
		&L_- \psi_-^i(t,y_1,y_2,y_3)= 0,\\
		&\psi_-^i(t,y_1,y_2,y_3)\big|_{\Sigma_0} = y_i.
	\end{cases}
\end{equation*}
Without loss of generality, let us assume $D=\partial_{x_1^\mp}$ for simplicity; actually all the cases $D\in \big\{\partial_{x_1^\mp},\partial_{x_2^\mp}, \partial_{u_\mp}\big\}$ can be discussed in a similar fashion. In order to convert the perturbation in derivatives to that in coordinates more naturally, we introduce another flow $\psi_{\pm,h}(t,y)=(\psi_{\pm,h}^1(t,y),\psi_{\pm,h}^2(t,y),\psi_{\pm,h}^3(t,y))$ given by 
\begin{equation*}
	\begin{cases}
		&L_+ \psi_{+,h}^i(t,y_1,y_2,y_3)= 0,\\
		&\psi_{+,h}^i(t,y_1,y_2,y_3)\big|_{\Sigma_0} = y_i+\delta_{1i}h,
	\end{cases}\ \ \ \ \ \ 
	\begin{cases}
		&L_-\psi_{-,h}^i(t,y_1,y_2,y_3)= 0,\\
		&\psi_{-,h}^i(t,y_1,y_2,y_3)\big|_{\Sigma_0} = y_i+\delta_{1i}h,
	\end{cases}
\end{equation*}
where $\delta_{1i}=1$ if $i=1$ and $\delta_{1i}=0$ if $i=2,3$. Hence, by using flow, it suffices to use the derivative $\partial_{y_1}$ in the differentiation, i.e. for a fixed $\tau$, we have 
\begin{align*}
&\partial_{y_1}\big(\nabla p\cdot\sqrt{1+|z_\mp\mp B_0|^2}\big)(\tau,\psi_\mp(\tau,y_1,y_2,y_3))\\
=&\lim_{h\to0}\frac{\big(\nabla p\cdot\sqrt{1+|z_\mp\mp B_0|^2}\big)(\tau,\psi_{\mp,h}(\tau,y_1,y_2,y_3))-\big(\nabla p\cdot\sqrt{1+|z_\mp\mp B_0|^2}\big)(\tau,\psi_\mp(\tau,y_1,y_2,y_3))}{h}.
\end{align*}

\smallskip

Explicit computations now lead us to
\begin{align*}
		&\ \ \ \ \Big|\frac{\big(\nabla p\cdot\sqrt{1+|z_\mp\mp B_0|^2}\big)(\tau,\psi_{\mp,h}(\tau,y_1,y_2,y_3))-\big(\nabla p\cdot\sqrt{1+|z_\mp\mp B_0|^2}\big)(\tau,\psi_\mp(\tau,y_1,y_2,y_3))}{h}\Big|\\
		&=\big|\partial_{y_1}(\nabla p\cdot\sqrt{1+|z_\mp\mp B_0|^2})(\tau,\psi_\mp(\tau,y_1,y_2,y_3))\cdot \partial_{y_1} \psi^1_\mp(\tau,y_1,y_2,y_3)\big|\\
		&\stackrel{\eqref{flow in integration}}{=}\Big|\partial_{x_1}(\nabla p\cdot\sqrt{1+|z_\mp\mp B_0|^2})(\tau,\psi_\mp(\tau,y_1,y_2,y_3))\cdot \big(1+ \partial_{y_1}\int_0^{+\infty} z^1_\mp(\tau,\psi_\mp(\tau,y_1,y_2,y_3))d\tau\big)\Big|\\
		&\leqslant \big(|\nabla^2 p|\cdot |z_\mp|+|\nabla p|\cdot |\nabla z_\mp|\big)\Big(1+ \int_0^{+\infty}|\nabla z_\mp|d\tau\Big)\\
		&\stackrel{\eqref{eq:pressure derivative bound}, \eqref{eq:Sobolev}}{\lesssim}\frac{\varepsilon^3}{(R+\tau)^{2\omega}}.
\end{align*}
Here in the last step we have also used a pivotal observation that 
\[\int_0^{+\infty}|\nabla z_\mp|d\tau=O(\varepsilon).\]
Reasoning herein is as follows: $z_\mp$ itself and the corresponding derivatives propagate along the characteristic vector field $L_\mp$, while the integration in time is carried out along the flow map $\psi_{\mp}(\tau,y)$ (which is normal to $L_\mp$ and also parallel to $L_\pm$) or in other words  along the characteristic vector field $L_\pm$. Consequently, the increasing spatial distance caused by these two different characteristic directions yields the decay of this integration in time. 
We remark here that this observation can be viewed as an analogue of the null form structure, and especially in this setting has the same flavor as the separation estimates for different family of waves (see Lemma \ref{lemma:separation estimate}) as well as the flux through characteristic hypersurfaces (see Lemma \ref{lemma:flux}).
	
\smallskip

Noticing that the dominant function $\dfrac{\varepsilon^3}{(R+\tau)^{2\omega}}$ is integrable in time, we can further apply the Lebesgue's dominated convergence theorem to commute the limit and the integral, i.e. 
\begin{align*}
	&\ \ \ \ \ \partial_{y_1}\Big(\int_0^{+\infty}\big(\nabla p\cdot\sqrt{1+|z_\mp\mp B_0|^2}\big)(\tau,\psi_{\mp}(\tau,y_1,y_2,y_3))d\tau\Big)\\
	&=\lim_{h\to 0}\!\frac{\int_0^{+\infty}\!\!\big(\nabla p\!\cdot\!\sqrt{1+|z_\mp\mp B_0|^2}\big)(\tau,\psi_{\mp,h}(\tau,y_1,y_2,y_3))d\tau\!\!-\!\!\int_0^{+\infty}\!\!\big(\nabla p\!\cdot\!\sqrt{1+|z_\mp\mp B_0|^2}\big)(\tau,\psi_{\mp}(\tau,y_1,y_2,y_3))d\tau}{h}\\
	&=\lim_{h\to 0}\frac{\int_0^{+\infty}\big(\big(\nabla p\cdot\sqrt{1+|z_\mp\mp B_0|^2}\big)(\tau,\psi_{\mp,h}(\tau,y_1,y_2,y_3))-\big(\nabla p\cdot\sqrt{1+|z_\mp\mp B_0|^2}\big)(\tau,\psi_{\mp}(\tau,y_1,y_2,y_3))\big)d\tau}{h}\\
	&=\int_0^{+\infty}\partial_{y_1}\big(\nabla p\cdot\sqrt{1+|z_\mp\mp B_0|^2}\big)(\tau,\psi_{\mp}(\tau,y_1,y_2,y_3))\cdot \partial_{y_1} \psi^1_{\mp}(\tau,y_1,y_2,y_3)	d\tau\\
	&=\int_0^{+\infty}\partial_{y_1}\big(\nabla p\cdot\sqrt{1+|z_\mp\mp B_0|^2}\big)(\tau,\psi_{\mp}(\tau,y_1,y_2,y_3))\cdot \Big(1+ \partial_{y_1}\int_0^\infty z^1_\mp(\tau,\psi_{\mp}(\tau,y_1,y_2,y_3))d\tau\Big)d\tau\\
	&=\int_0^{+\infty}\Big(\partial_{y_1}\nabla p\cdot\sqrt{1+|z_\mp\mp B_0|^2}+\nabla p\cdot\partial_{y_1}\sqrt{1+|z_\mp\mp B_0|^2}\Big)(\tau,\psi_{\mp}(\tau,y_1,y_2,y_3))\\
	&\ \ \ \ \ \ \ \ \times\Big(1+ \int_0^{+\infty}\partial_{y_1} z^1_\mp(\tau,\psi_{\mp}(\tau,y_1,y_2,y_3))d\tau\Big)d\tau.
\end{align*}
We then return to the usual characteristic coordinates to derive
\begin{align*}
	&\ \ \ \ \partial_{x_1^\mp}\Big(\int_0^{+\infty}\big(\nabla p\cdot\sqrt{1+|z_\mp\mp B_0|^2}\big)(\tau,x_1^\mp,x_2^\mp,u_\mp)d\tau\Big)\\
	&=\int_0^{+\infty}\Big(\partial_{x_1^\mp}(\nabla p)\cdot \sqrt{1+|z_\mp\mp B_0|^2} +\nabla p\cdot \partial_{x_1^\mp}\sqrt{1+|z_\mp\mp B_0|^2}\Big)(\tau,x_1^\mp,x_2^\mp,u_\mp)\\
	&\ \ \ \ \ \ \ \ \times\Big(1+ \int_0^{+\infty} \partial_{x_1^\mp} z^1_\mp(\tau,x_1^\mp,x_2^\mp,u_\mp)d\tau\Big)d\tau,
\end{align*}
which is the formal expression for $D=\partial_{x_1^\mp}$ as desired. 	
This completes the proof of the lemma.
\end{proof}

\smallskip

As a direct consequence of Lemma \ref{lemma:claim-proof1}, we obtain the following formal expression point-wisely: 
	\begin{align*}
	&\ \ \ \ \nabla\Big(\int_0^{+\infty}\big(\nabla p\cdot\sqrt{1+|z_\mp\mp B_0|^2}\big)(\tau,x_1^\mp,x_2^\mp,u_\mp)d\tau\Big)\\
	&=\int_0^{+\infty}\Big(\nabla^2 p\cdot \sqrt{1+|z_\mp\mp B_0|^2} +\nabla p\cdot \nabla \sqrt{1+|z_\mp\mp B_0|^2}\Big)(\tau,x_1^\mp,x_2^\mp,u_\mp)\\
	&\ \ \ \ \ \ \ \ \times\Big(\mathrm{I}+ \int_0^{+\infty} \nabla z_\mp(\tau,x_1^\mp,x_2^\mp,u_\mp)d\tau\Big)d\tau\\
	&=\int_0^{+\infty}\mathbf{R}_{\mp}^{(1)}(\tau,x_1^\mp,x_2^\mp,u_\mp)d\tau,
\end{align*}
which implies \eqref{eq:Lambda} for $|\beta|=1$.  
This is what we wanted to prove in the first step. 

\medskip

By using a classical induction scheme, it is time to establish \eqref{eq:Lambda} for $2\leqslant|\beta|\leqslant N_*+1$ in the rest steps. We now prepare a sequence of lemmas to ensure each step of induction.

\begin{lemma}\label{lemma:inL2-1}
	For all multi-indices $\beta$ with $1\leqslant\left|\beta\right|\leqslant N_*+1$,  we have 
\[\int_0^{+\infty}\mathbf{R}_{\mp}^{(\beta)}(\tau,x_1^\mp,x_2^\mp,u_\mp) d\tau\in L^2\big(\mathcal{F}_\pm,\langle u_\mp\rangle^{2\omega}d\mu_\mp\big).\]
\end{lemma}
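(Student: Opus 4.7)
The plan is to observe that Lemma \ref{lemma:inL2-1} is essentially a corollary of the bound on $\mathbf{Q_2}$ already carried out in Step 2 of the proof of Proposition \ref{prop:Sobolev}. Indeed, the integral $\int_0^{+\infty}\mathbf{R}_{\mp}^{(\beta)}\,d\tau$ is defined so that its $L^2(\mathcal{F}_\pm,\langle u_\mp\rangle^{2\omega}d\mu_\mp)$ norm coincides (up to combinatorial coefficients) with the quantity we have already estimated. So the proof reduces to making this identification precise and invoking the prior estimates.

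First, I would start by unfolding the definition \eqref{eq:Lambda-def} of $\mathbf{R}_{\mp}^{(\beta)}$ inside the weighted $L^2$ norm and passing from the parameter $\tau$ along the characteristic line to the coordinate $u_\pm$ on the transversal direction, using the switch between the Cartesian coordinates $(\tau,x_1,x_2,x_3)$ and the double characteristic coordinates $(x_1^\mp,x_2^\mp,u_\mp,u_\pm)$ (whose Jacobian is controlled by the geometric ansatz \eqref{Bootstrap on geometry}, now upgraded to \eqref{part 1'}--\eqref{part 2'}). Next, the weight $\langle u_\mp\rangle^{2\omega}$ is inserted and a Hölder inequality in the $u_\pm$-variable produces the extra factor $\langle u_\pm\rangle^\omega$ (since $\int_{\mathbb{R}} \langle u_\pm\rangle^{-\omega}\,du_\pm$ is a finite universal constant). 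This is exactly the maneuver performed in \eqref{eq:Q2-intro}.

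Second, once we are in the form \eqref{eq:Q2-def}, the estimate splits into the two pieces $\mathbf{Q_{21}}$ and $\mathbf{Q_{22}}$. For $\mathbf{Q_{21}}$ one isolates $\nabla^{k+1}p$ in $L^2$ with the full weight $\langle u_\mp\rangle^{2\omega}\langle u_\pm\rangle^\omega$ (controlled by Lemma \ref{lemma:pressureL2L2-all}) while placing $\nabla^{\beta-k}z_\mp$ in $L^\infty_{\tau,x}$ via Lemma \ref{lemma:Sobolev} at low derivative order, or reversing the roles when the order exceeds $N_*$. The integral factor $\int_{\mathbb{R}}\nabla z_\mp\,d\tau$ arising in $\mathbf{R}_{\mp}^{(\beta)}$ is a pointwise $O(\varepsilon)$-object, exactly as shown inside Lemma \ref{lemma:claim-proof1} (the null-form-like observation that the flow travels transversally to the propagation of $z_\mp$, cf.\ Lemma \ref{lemma:separation estimate} and Lemma \ref{lemma:flux}). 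So $\mathbf{Q_{21}}\lesssim \varepsilon^6$.

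Third, and this is where the only nontrivial difficulty lies, for $\mathbf{Q_{22}}$ when $|\beta|=N_*$ or $N_*+1$, we lose access to the pointwise weighted Sobolev bound on the highest-order derivative $\nabla^{l+1}z_\mp$ with $l\geqslant N_*-1$. In that regime the saving must come from placing $\nabla^{k+1}p$ in $L^2_\tau L^\infty_x$ (Lemma \ref{lemma:pressureL2Linfty-all}) and absorbing $\int_{\mathbb{R}}\nabla^{l+1}z_\mp\,d\tau$ via the weighted energy bound \eqref{Bootstrap on energy} in $L^\infty_\tau L^2_x$, together with the lower-order factor $\nabla^{\beta-l-k}z_\mp$ in $L^\infty_\tau L^\infty_x$. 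This is precisely the three-way splitting already carried out to bound $\mathbf{Q_{22}'}$ in the proof of Proposition \ref{prop:Sobolev}, yielding $\mathbf{Q_{22}}\lesssim \varepsilon^8$. Combining the two pieces gives the desired $L^2$ bound of size $O(\varepsilon^3)$, establishing the lemma and in particular guaranteeing that the (a priori formal) object $\int_0^{+\infty}\mathbf{R}_{\mp}^{(\beta)}\,d\tau$ is genuinely an element of $L^2(\mathcal{F}_\pm,\langle u_\mp\rangle^{2\omega}d\mu_\mp)$, so that Claim \ref{lemma:claim} makes sense as an equality in that space.
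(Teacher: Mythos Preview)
Your proposal is correct and matches the paper's approach exactly: the paper's proof is the single sentence ``Repetition of the arguments leading \eqref{eq:Q2-intro} to \eqref{eq:Q2} directly yields this result,'' and you have accurately unpacked those arguments (the H\"older-in-$u_\pm$ step producing the extra $\langle u_\pm\rangle^\omega$ weight, followed by the $\mathbf{Q_{21}}$/$\mathbf{Q_{22}}$ decomposition and the $L^2_\tau L^\infty_x$ pressure estimate for the top-order case). There is no circularity, since the chain of estimates from \eqref{eq:Q2-intro} to \eqref{eq:Q2} begins from the expression $\int_{\mathbb{R}^3}\big|\int_0^{+\infty}\mathbf{R}_{\mp}^{(\beta)}\,d\tau\big|^2\langle u_\mp\rangle^{2\omega}d\mu_\mp$ and nowhere invokes Claim~\ref{lemma:claim}.
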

\begin{proof} 
Repetition of the arguments leading \eqref{eq:Q2-intro} to \eqref{eq:Q2} directly yields this result.
\end{proof}

\smallskip

\begin{lemma}\label{lemma:inL2-2}
	For all multi-indices $\beta$ with $2\leqslant\left|\beta\right|\leqslant N_*+1$,  for all partial derivatives $D\in \big\{\partial_{x_1^\mp},\partial_{x_2^\mp}, \partial_{u_\mp}\big\}$,we have 
	\begin{equation*}
	D\Big(\int_0^{+\infty}\mathbf{R}_{\mp}^{(\beta-1)}(\tau,x_1^\mp,x_2^\mp,u_\mp)d\tau\Big)\in L^2\big(\mathcal{F}_\pm,\langle u_\mp\rangle^{2\omega}d\mu_\mp\big).
	\end{equation*}
\end{lemma}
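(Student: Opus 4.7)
The plan is to justify the commutation of $D$ with the time integral in $\int_0^{+\infty}\mathbf{R}_\mp^{(\beta-1)}d\tau$, and then recognize the resulting expression as a sum of $\mathbf{R}_\mp^{(\beta)}$-type objects to which Lemma \ref{lemma:inL2-1} directly applies. First I would imitate Lemma \ref{lemma:claim-proof1} by switching to the flow coordinates generated by $Z_\mp$, so that the partial derivative $D\in\{\partial_{x_1^\mp},\partial_{x_2^\mp},\partial_{u_\mp}\}$ along $\mathcal{F}_\pm$ becomes a partial derivative with respect to an initial label $y_i$. Lebesgue's dominated convergence theorem can then be invoked on the difference quotient associated to $\int_0^{+\infty}\mathbf{R}_\mp^{(\beta-1)}d\tau$ provided a $\tau$-integrable, $h$-uniform majorant is available.

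Next I would build that majorant by exploiting the product structure of $\mathbf{R}_\mp^{(\beta-1)}$, which splits as a time-dependent factor $\nabla^{k+1}p\cdot\nabla^{\beta-1-l-k}\sqrt{1+|z_\mp\mp B_0|^2}$ multiplied by the $\tau$-independent piece $\nabla^l(\mathrm{I}+\int_0^{+\infty}\nabla z_\mp d\tau)$. For the low-order summands (those with $k$ and $\beta-1-l-k$ both at most $N_*-1$), the pointwise pressure bound of Lemma \ref{lemma:pressure derivative bound}, combined with the weighted Sobolev inequality of Lemma \ref{lemma:Sobolev} and the null-type decay $\int_0^{+\infty}|\nabla z_\mp|\,d\tau = O(\varepsilon)$ already used in Lemma \ref{lemma:claim-proof1}, deliver a uniform majorant of size $\varepsilon^{c}/(R+\tau)^{2\omega}$ that is manifestly integrable in $\tau$. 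With the interchange secure, the Leibniz rule distributes $D$ across the product: it either promotes $\nabla^{k+1}p$ to $\nabla^{k+2}p$, adds one derivative to $\sqrt{1+|z_\mp\mp B_0|^2}$, or raises the outside factor $\nabla^l(\mathrm{I}+\int_0^{+\infty}\nabla z_\mp d\tau)$ to $\nabla^{l+1}$ of the same quantity. After relabeling the summation indices, the three contributions reassemble precisely into the expression defining $\mathbf{R}_\mp^{(\beta)}$, yielding the $L^2$-identity
\[D\Big(\int_0^{+\infty}\mathbf{R}_\mp^{(\beta-1)}(\tau,x_1^\mp,x_2^\mp,u_\mp)\,d\tau\Big)\stackrel{L^2}{=\joinrel=\joinrel=}\int_0^{+\infty}\mathbf{R}_\mp^{(\beta)}(\tau,x_1^\mp,x_2^\mp,u_\mp)\,d\tau.\]
Since $|\beta|\leqslant N_*+1$ lies in the admissible range, Lemma \ref{lemma:inL2-1} then places the right-hand side in $L^2(\mathcal{F}_\pm,\langle u_\mp\rangle^{2\omega}d\mu_\mp)$, which is the desired conclusion.

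The main obstacle will be the borderline cases $|\beta|\in\{N_*,N_*+1\}$, in which one of the factors $\nabla^{k+1}p$ or $\nabla^{\beta-1-l-k}z_\mp$ exceeds the range of the Sobolev embedding of Lemma \ref{lemma:Sobolev}, so that no pointwise-in-$(\tau,x)$ majorant can be built directly. In this regime the dominated-convergence argument must be upgraded by a mixed-norm one: the pressure term is controlled in $L^2_\tau L^\infty_x$ via Lemma \ref{lemma:pressureL2Linfty-all}, whereas the top-order derivative of $z_\mp$ is controlled in $L^\infty_\tau L^2_x$ through the weighted div-curl inequality of Remark \ref{rmk:divcurl} together with the energy bound \eqref{Bootstrap on energy}. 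This is precisely the mechanism used to estimate $\mathbf{Q_{22}'}$ in the proof of Proposition \ref{prop:Sobolev}; applied now to the difference quotient of $\mathbf{R}_\mp^{(\beta-1)}$ rather than to the final object, it supplies an $L^2$-valued integrable majorant, legitimizes the interchange in the appropriate weak sense, and closes the argument.
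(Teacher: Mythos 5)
There is a genuine gap at the heart of your plan. You propose to first commute $D$ with $\int_0^{+\infty}\,d\tau$ by dominated convergence and then invoke Lemma \ref{lemma:inL2-1}; but the required majorant must be pointwise in $(\tau,x)$, uniform in $h$, and integrable in $\tau$, and such a majorant is unavailable for most of the range $2\leqslant|\beta|\leqslant N_*+1$, not only for $|\beta|\in\{N_*,N_*+1\}$. Indeed, $\mathbf{R}_\mp^{(\beta)}$ contains $\nabla^{k+1}p$ with $k+1$ as large as $|\beta|+1\leqslant N_*+2$, while the pointwise pressure bound (Lemma \ref{lemma:pressure derivative bound}) is proved only for $\nabla^l p$ with $l\leqslant 3$; for higher derivatives of $p$ only $L^2$-type controls (Lemmas \ref{lemma:pressureL2L2-all}, \ref{lemma:pressureL2Linfty-all}) exist, and the top-order factor $\nabla^{\beta-l-k}z_\mp$ is likewise only controlled in $L^\infty_\tau L^2_x$ once its order exceeds $N_*-1$. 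Your proposed fix --- a ``mixed-norm'' upgrade of dominated convergence that ``legitimizes the interchange in the appropriate weak sense'' --- is not a theorem: at best it would yield a distributional identity, which does not by itself show that the classical derivative $D\big(\int_0^{+\infty}\mathbf{R}_\mp^{(\beta-1)}d\tau\big)$ exists and lies in $L^2$; making the weak identity rigorous is exactly the content of the \emph{next} lemma (Lemma \ref{lemma:claim-proof2}), which is carried out by pairing against test functions and two applications of Fubini, and which in the paper's logic \emph{uses} Lemma \ref{lemma:inL2-2} as an input to upgrade the distributional equality to an $L^2$ one.

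The paper's actual proof avoids the commutation question entirely. It writes the weighted $L^2$ norm of $D\big(\int_0^{+\infty}\mathbf{R}_\mp^{(\beta-1)}d\tau\big)$ as a limit of difference quotients, applies Fatou's lemma to pull the limit outside the $x$-integral, rewrites each difference quotient by the Newton--Leibniz formula as $\int_0^1\partial_{x_1^\mp}\mathbf{R}_\mp^{(\beta-1)}(\tau,x_1^\mp+\theta h,\cdot)\,d\theta$, and then, after a translation $x_1^\mp\mapsto x_1^\mp+\theta h$ (harmless because the weights are shift-comparable), bounds the resulting quantity uniformly in $h,\theta$ by the already-established estimate $\int_{\mathbb{R}^3}\big|\int_0^{+\infty}|\mathbf{R}_\mp^{(\beta)}|d\tau\big|^2\langle u_\mp\rangle^{2\omega}d\mu_\mp\lesssim\varepsilon^6$ from \eqref{eq:Q2}. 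No dominated convergence and no interchange of $D$ with the time integral is needed at this stage. If you wish to salvage your route, you would have to (i) prove the distributional identity first via the Fubini/duality argument of Lemma \ref{lemma:claim-proof2} (which only needs Lemma \ref{lemma:inL2-1}), and (ii) still supply a separate argument that the pointwise derivative exists and agrees with the weak one --- at which point you have essentially reconstructed the paper's two-lemma structure rather than shortcut it.
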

\begin{proof}
	It suffices to consider the case where $D=\partial_{x_1^\mp}$, since the other  cases can be treated in the same way.  
Let us begin by using the definition of partial derivative and the product law of limit:
	\begin{align*}
		\mathbf{S}&:= \int_{\mathcal{F}_+}\Big|\partial_{x_1^\mp}\Big(\int_0^{+\infty}\mathbf{R}_{\mp}^{(\beta-1)}(\tau,x_1^\mp,x_2^\mp,u_\mp)d\tau\Big)\Big|^2\langle u_{\mp}\rangle^{2\omega}d\mu_\mp\\
		&=\int_{\mathbb{R}^3}\Big|\lim_{h\to 0}\int_0^{+\infty}\frac{\mathbf{R}_{\mp}^{(\beta-1)}(\tau,x_1^\mp+h,x_2^\mp,u_\mp)-\mathbf{R}_{\mp}^{(\beta-1)}(\tau,x_1^\mp,x_2^\mp,u_\mp)}{h}d\tau\Big|^2\langle u_{\mp}\rangle^{2\omega}d\mu_\mp\\
		&=\int_{\mathbb{R}^3}\lim_{h\to 0}\Big|\int_0^{+\infty}\frac{\mathbf{R}_{\mp}^{(\beta-1)}(\tau,x_1^\mp+h,x_2^\mp,u_\mp)-\mathbf{R}_{\mp}^{(\beta-1)}(\tau,x_1^\mp,x_2^\mp,u_\mp)}{h}d\tau\Big|^2\langle u_{\mp}\rangle^{2\omega}d\mu_\mp.
	\end{align*}
At this juncture, we take advantage of
Fatou's lemma as well as Newton-Leibniz formula to get
	\begin{align*}
		\mathbf{S} &\leqslant\liminf_{h\to 0}\int_{\mathbb{R}^3}\Big|\int_0^{+\infty}\frac{\mathbf{R}_{\mp}^{(\beta-1)}(\tau,x_1^\mp+h,x_2^\mp,u_\mp)-\mathbf{R}_{\mp}^{(\beta-1)}(\tau,x_1^\mp,x_2^\mp,u_\mp)}{h}d\tau\Big|^{2}\langle u_{\mp}\rangle^{2\omega}d\mu_\mp\\
		&\leqslant\liminf_{h\to 0}\underbrace{\int_{\mathbb{R}^3}\Big|\int_0^{+\infty}\int_{0}^{1}\partial_{x_1^\mp}\mathbf{R}_{\mp}^{(\beta-1)}(\tau,x_1^\mp+\theta h,x_2^\mp,u_\mp)d\theta d\tau\Big|^2 \langle u_{\mp}\rangle^{2\omega}d\mu_\mp}_{\mathbf{S}_h}.
	\end{align*}
	For all $h$, we have
	\begin{align*}
		\mathbf{S}_h&\leqslant\int_{\mathbb{R}^3}\Big|\int_{0}^{1}\int_0^{+\infty}\big|\partial_{x_1^\mp}\mathbf{R}_{\mp}^{(\beta-1)}(\tau,x_1^\mp+\theta h,x_2^\mp,u_\mp)\big|d\tau d\theta\Big|^2 \langle u_{\mp}\rangle^{2\omega}dx_1^\mp dx_2^\mp du_\mp\\
		&\leqslant\int_{0}^{1}\underbrace{\int_{\mathbb{R}^3}\Big|\int_0^{+\infty}\big|\partial_{x_1^\mp}\mathbf{R}_{\mp}^{(\beta-1)}(\tau,x_1^\mp+\theta h,x_2^\mp,u_\mp)\big|d\tau\Big|^2 \langle u_{\mp}\rangle^{2\omega}dx_1^\mp dx_2^\mp du_\mp}_{\mathbf{S}_{h,\theta}}d\theta.
	\end{align*}
	In $\mathbf{S}_{h,\theta}$, we observe that both $\theta$ and $h$ do not depend on $x_1^\mp$. Thus, by change of variable $x_1^\mp\rightarrow X_1^\mp:=x_1^\mp+\theta h$, we are able to derive
	\begin{align*}
		\mathbf{S}_{h,\theta}&\lesssim\int_{\mathbb{R}^3}\Big|\int_0^{+\infty} \big|\partial_{x_1^\mp}\mathbf{R}_{\mp}^{(\beta-1)}(\tau,X_1^\mp,x_2^\mp,u_\mp)\big| d\tau\Big|^2 \langle u_\mp\rangle^{2\omega}dX_1^\mp dx_2^\mp du_\mp\\
	&\lesssim\int_{\mathbb{R}^3}\Big|\int_0^{+\infty} \big|\mathbf{R}_{\mp}^{(\beta)}(\tau,X_1^\mp,x_2^\mp,u_\mp)\big| d\tau\Big|^2 \langle u_\mp\rangle^{2\omega}dX_1^\mp dx_2^\mp du_\mp\\
	&\stackrel{\eqref{eq:Q2}}{\lesssim} \varepsilon^6,
	\end{align*}
where in the last step we have used the proof from \eqref{eq:Q2-intro} to \eqref{eq:Q2}. 
Coming back to the estimates on $	\mathbf{S}_h$ and finally on $\mathbf{S}$, we obtain 
\begin{equation*}
	\mathbf{S}\lesssim\varepsilon^6.
\end{equation*}
It is now obvious that the lemma holds.
\end{proof}

\smallskip
 
\begin{lemma}\label{lemma:claim-proof2}
 For all multi-indices $\beta$ with $2\leqslant\left|\beta\right|\leqslant N_*+1$,  for all partial derivatives $D\in \big\{\partial_{x_1^\mp},\partial_{x_2^\mp}, \partial_{u_\mp}\big\}$, as vector fields in $L^2(\mathcal{F}_\pm,\langle u_{\mp}\rangle^{2\omega} d\mu_\mp)$, there holds
	\begin{align*}
		D\Big(\int_0^{+\infty}\mathbf{R}_{\mp}^{(\beta-1)}(\tau,x_1^\mp,x_2^\mp,u_\mp)d\tau\Big) \stackrel{L^2(\mathcal{F}_\pm,\langle u_{\mp}\rangle^{2\omega} d\mu_\mp)}{=\joinrel=\joinrel=\joinrel=} \int_0^{+\infty}D\mathbf{R}_{\mp}^{(\beta-1)}(\tau,x_1^\mp,x_2^\mp,u_\mp)d\tau,
	\end{align*}
	and therefore 
	\begin{align*}
		\nabla\Big(\int_0^{+\infty}\mathbf{R}_{\mp}^{(\beta-1)}(\tau,x_1^\mp,x_2^\mp,u_\mp)d\tau\Big) \stackrel{L^2(\mathcal{F}_\pm,\langle u_{\mp}\rangle^{2\omega} d\mu_\mp)}{=\joinrel=\joinrel=\joinrel=} \int_0^{+\infty}\mathbf{R}_{\mp}^{(\beta)}(\tau,x_1^\mp,x_2^\mp,u_\mp)d\tau.
\end{align*}
\end{lemma}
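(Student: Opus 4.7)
The plan is to adapt the pointwise argument of Lemma \ref{lemma:claim-proof1} to the weighted $L^2$-setting, using Lemmas \ref{lemma:inL2-1} and \ref{lemma:inL2-2} as the primary inputs. Throughout I treat $D=\partial_{x_1^\mp}$; the cases $D=\partial_{x_2^\mp},\partial_{u_\mp}$ are completely analogous. Once the per-direction identity is established, the full gradient statement follows by direct inspection of \eqref{eq:Lambda-def}: applying one extra derivative to $\mathbf{R}_\mp^{(\beta-1)}$ via the Leibniz rule reproduces, after routine reindexing in $(l,k)$, exactly the summation defining $\mathbf{R}_\mp^{(\beta)}$.

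First I would introduce the difference quotient
\[
Q_h(x_1^\mp,x_2^\mp,u_\mp):=\frac{1}{h}\int_0^{+\infty}\!\Big(\mathbf{R}_\mp^{(\beta-1)}(\tau,x_1^\mp+h,x_2^\mp,u_\mp)-\mathbf{R}_\mp^{(\beta-1)}(\tau,x_1^\mp,x_2^\mp,u_\mp)\Big)d\tau,
\]
which by Newton-Leibniz equals $\int_0^1\!\int_0^{+\infty}\partial_{x_1^\mp}\mathbf{R}_\mp^{(\beta-1)}(\tau,x_1^\mp+\theta h,x_2^\mp,u_\mp)\,d\tau\,d\theta$. Set $G:=\int_0^{+\infty}\partial_{x_1^\mp}\mathbf{R}_\mp^{(\beta-1)}d\tau$. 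Applying the Leibniz rule to $\partial_{x_1^\mp}\mathbf{R}_\mp^{(\beta-1)}$ and matching against \eqref{eq:Lambda-def}, one sees that $\partial_{x_1^\mp}\mathbf{R}_\mp^{(\beta-1)}$ is (up to combinatorial coefficients) a component of $\mathbf{R}_\mp^{(\beta)}$; consequently, Lemma \ref{lemma:inL2-1} applied with $\beta$ guarantees $G\in L^2(\mathcal{F}_\pm,\langle u_\mp\rangle^{2\omega}d\mu_\mp)$.

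Next I would prove $Q_h\to G$ in $L^2(\mathcal{F}_\pm,\langle u_\mp\rangle^{2\omega}d\mu_\mp)$. Writing $f(x_1^\mp,x_2^\mp,u_\mp):=\int_0^{+\infty}\partial_{x_1^\mp}\mathbf{R}_\mp^{(\beta-1)}(\tau,\cdot)\,d\tau$ (so that $G=f$ as an element of $L^2$), Minkowski's integral inequality yields
\[
\|Q_h-G\|_{L^2(\mathcal{F}_\pm,\langle u_\mp\rangle^{2\omega}d\mu_\mp)}\leqslant \int_0^1\big\|f(\cdot+\theta h,\cdot,\cdot)-f(\cdot,\cdot,\cdot)\big\|_{L^2(\mathcal{F}_\pm,\langle u_\mp\rangle^{2\omega}d\mu_\mp)}\,d\theta.
\]
Since $\langle u_\mp\rangle^{2\omega}$ is independent of $x_1^\mp$, weight and translation commute, and so the integrand tends to $0$ as $h\to 0$ by classical continuity of translations in $L^2$; dominated convergence in $\theta$ using the uniform majorant $2\|f\|_{L^2}$ then gives $Q_h\to G$ strongly in $L^2$. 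On the other hand, by definition the distributional derivative $D\big(\int_0^{+\infty}\mathbf{R}_\mp^{(\beta-1)}d\tau\big)$ is the distributional limit of $Q_h$, so uniqueness of limits identifies $D\big(\int_0^{+\infty}\mathbf{R}_\mp^{(\beta-1)}d\tau\big)=G$ as distributions; Lemma \ref{lemma:inL2-2} upgrades the equality to one of vector fields in $L^2(\mathcal{F}_\pm,\langle u_\mp\rangle^{2\omega}d\mu_\mp)$.

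The main obstacle lies in verifying that $x_1^\mp\mapsto f(x_1^\mp,\cdot,\cdot)$ is continuous into $L^2(\mathcal{F}_\pm,\langle u_\mp\rangle^{2\omega}d\mu_\mp)$. At the top order $|\beta|=N_*+1$ this is delicate: the integrand contains the highest-order pressure $\nabla^{N_*+2}p$ coupled with top-order $z_\mp$ quantities and with the $\tau$-integral $\int_0^{+\infty}\nabla z_\mp\,d\tau$. The way out is to mimic the bound on $\mathbf{Q_2}$ in Step 2 of Proposition \ref{prop:Sobolev}: combining the $L^2_\tau L^\infty_x$ pressure estimate from Lemma \ref{lemma:pressureL2Linfty-all}, the divcurl-based bound in Remark \ref{rmk:divcurl}, and the separation decay in Lemma \ref{lemma:separation estimate}, one produces a $\tau$-integrable weighted majorant independent of $h$ that dominates the integrand in $Q_h-G$, legitimizing the passage $h\to 0$ under the integral and thereby closing the argument.
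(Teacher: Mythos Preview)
Your approach is correct but genuinely different from the paper's. The paper works purely distributionally: it pairs both sides with a test function $\varphi\in\mathcal{D}(\mathbb{R}^3)$, integrates by parts, and applies Fubini's theorem twice (with careful verification of the $L^1$ bounds via the weighted spacetime estimates). You instead show directly that the difference quotients $Q_h$ converge \emph{strongly} in the weighted $L^2$ to $G=\int_0^{+\infty}D\mathbf{R}_\mp^{(\beta-1)}\,d\tau$, and then identify this limit with the distributional derivative $DF$. Both routes are clean; yours avoids the Fubini bookkeeping, while the paper's avoids having to justify the Newton--Leibniz representation of $Q_h$ at the integrand level.

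Two small remarks. First, your closing paragraph is unnecessary worry: once Lemma~\ref{lemma:inL2-1} gives $G\in L^2(\mathcal{F}_\pm,\langle u_\mp\rangle^{2\omega}d\mu_\mp)$, continuity of translations in $L^2$ is automatic and requires no separate top-order analysis---the delicate pressure and separation estimates you cite are already baked into Lemma~\ref{lemma:inL2-1}. In fact your argument does not even need Lemma~\ref{lemma:inL2-2}: the strong $L^2$ convergence $Q_h\to G$ together with the trivial distributional convergence $Q_h\to DF$ already forces $DF=G\in L^2$. Second, your claim that ``$\langle u_\mp\rangle^{2\omega}$ is independent of $x_1^\mp$'' is fine for $D=\partial_{x_1^\mp},\partial_{x_2^\mp}$, but for $D=\partial_{u_\mp}$ the weight does depend on the translation variable; this is harmless since $\langle u_\mp+h\rangle/\langle u_\mp\rangle\to 1$ uniformly, but the three cases are not quite ``completely analogous.''
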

\begin{proof}
	By virtue of Lemma \ref{lemma:inL2-1} and Lemma \ref{lemma:inL2-2}, it suffices to show the following equality in the sense of distributions (without loss of generality, we may also assume $D=\partial_{x_1^\mp}$ as before):
		\begin{align*}
		D\Big(\int_0^{+\infty}\mathbf{R}_{\mp}^{(\beta-1)}(\tau,x_1^\mp,x_2^\mp,u_\mp)d\tau\Big) \stackrel{\mathcal{D}'(\mathcal{F}_\pm)}{=\joinrel=\joinrel=}\int_0^{+\infty}D\mathbf{R}_{\mp}^{(\beta-1)}(\tau,x_1^\mp,x_2^\mp,u_\mp)d\tau.
	\end{align*}
According to the distribution theory, this problem is thus reduced to showing that the following two pairings are equal for any vector field $\varphi\in \mathcal{D}(\mathbb{R}^3)$:
 \begin{align*}
 	\underbrace{\bigg\langle D\Big(\int_0^{+\infty}\mathbf{R}_{\mp}^{(\beta-1)}(\tau,x_1^\mp,x_2^\mp,u_\mp)d\tau\Big),\varphi(x_1^\mp,x_2^\mp,u_\mp)\bigg\rangle}_{\mathbf{U}}=\underbrace{\bigg\langle\int_0^{+\infty} D \mathbf{R}_{\mp}^{(\beta-1)}(\tau,x_1^\mp,x_2^\mp,u_\mp) d\tau,\varphi(x_1^\mp,x_2^\mp,u_\mp)\bigg\rangle}_{\mathbf{V}}.
 \end{align*}
 
 \smallskip
 
Based on Lemma \ref{lemma:inL2-1}, we begin by observing that the aforementioned integrals are locally integrable functions in $\mathbb{R}^3$ (and hence can be used to define distributions), i.e.
\begin{align*}
&\int_0^{+\infty}\mathbf{R}_{\mp}^{(\beta-1)}(\tau,x_1^\mp,x_2^\mp,u_\mp)d\tau\in L^2\big(\mathcal{F}_\pm,\langle u_\mp\rangle^{2\omega}d\mu_\mp\big)\subset L^2(\mathbb{R}^3,d\mu_\mp)\subset L^1_{\text{loc}}(\mathbb{R}^3,d\mu_\mp),\\
&\int_0^{+\infty}D\mathbf{R}_{\mp}^{(\beta-1)}(\tau,x_1^\mp,x_2^\mp,u_\mp)d\tau\in L^2\big(\mathcal{F}_\pm,\langle u_\mp\rangle^{2\omega}d\mu_\mp\big)\subset L^2(\mathbb{R}^3,d\mu_\mp)\subset L^1_{\text{loc}}(\mathbb{R}^3,d\mu_\mp).
\end{align*}
These two facts together with integrating by parts then lead us to a series of computations as follows:
\begin{align*}
	\mathbf{U}&=-\bigg\langle \int_0^{+\infty}\mathbf{R}_{\mp}^{(\beta-1)}(\tau,x_1^\mp,x_2^\mp,u_\mp)d\tau, D \varphi(x_1^\mp,x_2^\mp,u_\mp)\bigg\rangle\\
	&=-\int_{\mathbb{R}^3}\Big(\int_0^{+\infty}\mathbf{R}_{\mp}^{(\beta-1)}(\tau,x_1^\mp,x_2^\mp,u_\mp)d\tau\Big)\cdot D\varphi(x_1^\mp,x_2^\mp,u_\mp)d\mu_\mp\\
	&\stackrel{\text{Fubini}}{=}-\int_{[0,+\infty) \times \mathbb{R}^3}\mathbf{R}_{\mp}^{(\beta-1)}(\tau,x_1^\mp,x_2^\mp,u_\mp)\cdot D\varphi(x_1^\mp,x_2^\mp,u_\mp)d\tau d\mu_\mp\\
	&=-\int_{[0,+\infty)} \bigg(\int_{\mathbb{R}^3}\mathbf{R}_{\mp}^{(\beta-1)}(\tau,x_1^\mp,x_2^\mp,u_\mp)\cdot D\varphi(x_1^\mp,x_2^\mp,u_\mp) d\mu_\mp\bigg)d\tau \\
	&=\int_{[0,+\infty)} \bigg(\int_{\mathbb{R}^3}D\mathbf{R}_{\mp}^{(\beta-1)}(\tau,x_1^\mp,x_2^\mp,u_\mp)\cdot \varphi(x_1^\mp,x_2^\mp,u_\mp)d\mu_\mp\bigg)d\tau \\
	&=\int_{[0,+\infty)\times \mathbb{R}^3}D\mathbf{R}_{\mp}^{(\beta-1)}(\tau,x_1^\mp,x_2^\mp,u_\mp)\cdot \varphi(x_1^\mp,x_2^\mp,u_\mp)d\tau d\mu_\mp \\
	&\stackrel{\text{Fubini}}{=}\int_{\mathbb{R}^3}\Big(\int_{0}^{+\infty} D\mathbf{R}_{\mp}^{(\beta-1)}(\tau,x_1^\mp,x_2^\mp,u_\mp)d\tau\Big)\cdot \varphi(x_1^\mp,x_2^\mp,u_\mp)d\mu_\mp\\
	&=\mathbf{V},
\end{align*}	
where in particular we have applied Fubini's theorem twice to commute the order of integrals as noted. Evidently, the proof of this lemma will be complete once we show that these two applications of Fubini's theorem are both legitimate.
	
\smallskip	
	
In fact, for the first application, it is natural to consider the following spacetime integral:
\begin{align*}
		\mathbf{U'}&:=
		\int_{[0,+\infty)\times \mathbb{R}^3}\big|\mathbf{R}_{\mp}^{(\beta-1)}(\tau,x_1^\mp,x_2^\mp,u_\mp)\cdot D\varphi(x_1^\mp,x_2^\mp,u_\mp)\big|d\tau d\mu_\mp\\
		&\lesssim \int_{\mathbb{R}^3}\int_{\mathbb{R}}\big|\mathbf{R}_{\mp}^{(\beta-1)}(\tau,x_1^\mp,x_2^\mp,u_\mp)\big|\cdot|D\varphi(x_1^\mp,x_2^\mp,u_\mp)|dx_1^\mp dx_2^\mp du_\mp du_\pm\\
		&\lesssim
		\bigg(\underbrace{\int_{\mathbb{R}\times \mathbb{R}^3}\langle  u_\pm\rangle^{\omega}\big|\mathbf{R}_{\mp}^{(\beta-1)}(\tau,x_1^\mp,x_2^\mp,u_\mp)\big|^2 }_{\mathbf{U_1}}\bigg)^{\frac{1}{2}}
		\Bigg(\underbrace{\int_{\mathbb{R}\times \mathbb{R}^3}\frac{|D\varphi(x_1^\mp,x_2^\mp,u_\mp)|^2}{\langle u_\pm\rangle^{\omega}}}_{\mathbf{U_2}}\Bigg)^{\frac{1}{2}}.
	\end{align*}
Let us first bound $\mathbf{U_1}$:
	\begin{equation*}
		\mathbf{U_1}
		\lesssim
		\int_{\mathbb{R}\times \mathbb{R}^3}\langle u_\mp\rangle^{2\omega}\langle u_\pm\rangle^{\omega}\big|\mathbf{R}_{\mp}^{(\beta-1)}(\tau,x_1^\mp,x_2^\mp,u_\mp)\big|^2\lesssim \varepsilon^6.
	\end{equation*}
where we have used the fact   $\langle u_\mp\rangle^{2\omega}\geqslant 1$ and the bound of the right hand side of \eqref{eq:Q2-intro} in Lemma \ref{lemma:inL2-1}.
For $\mathbf{U_2}$, since $\varphi\in \mathcal{D}(\mathbb{R}^3)$, then $D\varphi\in \mathcal{D}(\mathbb{R}^3)\subset L^2(\mathbb{R}^3)$ and we can use the flux to bound
	\begin{equation*}
		\mathbf{U_2}= \int_{\mathbb{R}}\frac{1}{\langle u_\pm\rangle^{\omega}}\Big(\int_{C^{\pm,t}_{u_\pm}} |D \varphi(x_1^\mp,x_2^\mp,u_\mp)|^2  d\sigma_\pm \Big)du_\pm\lesssim \|\nabla \varphi\|_{L^1}.
	\end{equation*}
	Thus we arrive at the conclusion that 
	\begin{equation*}
		\mathbf{U'}<+\infty.
	\end{equation*}
In this way, we are able to employ Fubini's theorem for the first time. 

\smallskip
 
We turn to focus on the second application of Fubini's theorem. The  argument above can be repeated and we obtain  
\begin{align*}
	\mathbf{V'}&:=
	\int_{[0,+\infty)\times \mathbb{R}^3}\big|D\mathbf{R}_{\mp}^{(\beta-1)}(\tau,x_1^\mp,x_2^\mp,u_\mp)\cdot \varphi(x_1^\mp,x_2^\mp,u_\mp)\big|d\tau d\mu_\mp\\
	&\lesssim \int_{\mathbb{R}^3}\int_{\mathbb{R}}\big|D\mathbf{R}_{\mp}^{(\beta-1)}(\tau,x_1^\mp,x_2^\mp,u_\mp)\big|\cdot|\varphi(x_1^\mp,x_2^\mp,u_\mp)|dx_1^\mp dx_2^\mp du_\mp du_\pm\\
	&\lesssim
	\bigg(\underbrace{\int_{\mathbb{R}\times \mathbb{R}^3}\langle  u_\pm\rangle^{\omega}\big|D\mathbf{R}_{\mp}^{(\beta-1)}(\tau,x_1^\mp,x_2^\mp,u_\mp)\big|^2 }_{\mathbf{V_1}}\bigg)^{\frac{1}{2}}
	\Bigg(\underbrace{\int_{\mathbb{R}\times \mathbb{R}^3}\frac{|\varphi(x_1^\mp,x_2^\mp,u_\mp)|^2}{\langle u_\pm\rangle^{\omega}}}_{\mathbf{V_2}}\Bigg)^{\frac{1}{2}}.
\end{align*}
We can also derive
\begin{align*}
	\mathbf{V_1}
	&\lesssim
	\int_{\mathbb{R}\times \mathbb{R}^3}\langle u_\mp\rangle^{2\omega}\langle u_\pm\rangle^{\omega}\big|D\mathbf{R}_{\mp}^{(\beta-1)}(\tau,x_1^\mp,x_2^\mp,u_\mp)\big|^2\lesssim \varepsilon^6,\\
	\mathbf{V_2}&= \int_{\mathbb{R}}\frac{1}{\langle u_\pm\rangle^{\omega}}\Big(\int_{C^{\pm,t}_{u_\pm}} | \varphi(x_1^\mp,x_2^\mp,u_\mp)|^2  d\sigma_\pm \Big)du_\pm\lesssim \|\nabla \varphi\|_{L^1}.
\end{align*}
A further step shows  
\begin{equation*}
	\mathbf{V'}<+\infty,
\end{equation*}
which enables us to use Fubini's theorem again. 
Consequently, the lemma is recovered.
\end{proof}

\medskip

Based on Lemma \ref{lemma:inL2-1} and Lemma \ref{lemma:inL2-2}, we can start from \eqref{eq:Lambda} for $|\beta|=1$ and apply Lemma \ref{lemma:claim-proof2} for $k$ (with $1\leqslant k \leqslant N_*$) times to derive \eqref{eq:Lambda} for $2\leqslant|\beta|\leqslant N_*+1$. In this process, the $k$-th induction depends only on the conclusion for the $(k-1)$-th induction, and therefore \eqref{eq:Lambda} for $2\leqslant|\beta|\leqslant N_*+1$ can be obtained in a trivial manner. 
Up to now, we have completed the proof of Claim \ref{lemma:claim}. 

\smallskip

Thus the proof of Proposition \ref{prop:Sobolev} is now complete and Theorem \ref{thm:weightSobolev} follows immediately.

\bigskip

\subsection{Asymptotic property of scattering fields}\label{differential}
We are now ready to prove Theorem \ref{thm:deviation}. Also by symmetry considerations, it suffices to establish the asymptotic property of future scattering fields.  In other words, we only need to study the deviation of scattering fields $z_\pm(+\infty; x_1^\mp,x_2^\mp,u_\mp)$ from their corresponding linear propagating fields $z_\pm(0, x_1^\mp,x_2^\mp,u_\mp)$.

\smallskip

Performing the same procedure as in the proof of Proposition \ref{prop:Sobolev} evidently gives rise to the following:
when $|\beta|=0$, it holds that 
\begin{align*}
	& \ \ \ \ 
	\int_{\mathcal{F}_+}|z_\pm(+\infty;x_1^\mp,x_2^\mp,u_\mp)-z_\pm(0,x_1^\mp,x_2^\mp,u_\mp)|^2\langle u_\mp\rangle^{2\omega}d\mu_\mp\\
	&\lesssim	\int_{\mathbb{R}^3} \Big|\int_{0}^{+\infty}\big(\nabla p\cdot\sqrt{1+|z_\mp\mp B_0|^2}\big)(\tau,x_1^\mp,x_2^\mp,u_\mp)d\tau\Big|^2\langle u_\mp\rangle^{2\omega}dx_1^\mp dx_2^\mp du_\mp\\
	&=\mathbf{P_2}\lesssim\varepsilon^4,
\end{align*}
and when $1\leqslant|\beta|\leqslant N_*+1$, it holds that 
\begin{align*}
&\ \ \ \
\int_{\mathcal{F}_\pm} \big|\nabla^{\beta}z_{\pm}(+\infty;x_1^\mp,x_2^\mp,u_\mp)-\nabla^{\beta}z_\pm(0,x_1^\mp,x_2^\mp,u_\mp)\big|^2\langle u_{\mp}\rangle^{2\omega}d\mu_\mp\\
&\lesssim \int_{\mathbb{R}^3} \Big|\nabla^{\beta}\Big(\int_0^{+\infty}\big(\nabla p\cdot\sqrt{1+|z_\mp\mp B_0|^2}\big)(\tau,x_1^\mp,x_2^\mp,u_\mp)d\tau\Big)\Big|^2\langle u_{\mp}\rangle^{2\omega}dx_1^\mp dx_2^\mp du_\mp\\
&=\mathbf{Q_2}\lesssim \varepsilon^6.
\end{align*}
The resulting estimates for all multi-indices $\beta$ with $0\leqslant |\beta|\leqslant N_*+1$ then become
\[\int_{\mathcal{F}_\pm} \big|\nabla^{\beta}z_{\pm}(+\infty;x_1^\mp,x_2^\mp,u_\mp)-\nabla^{\beta}z_\pm(0,x_1^\mp,x_2^\mp,u_\mp)\big|^2\langle u_{\mp}\rangle^{2\omega}d\mu_\mp\lesssim\varepsilon^4,\]
and therefore we are led to the conclusion that 
\[\big\|z_\pm(+\infty;x_1^\mp,x_2^\mp,u_\mp)-z_\pm(0,x_1^\mp,x_2^\mp,u_\mp)\big\|_{H^{N_*+1}(\mathcal{F}_\pm,\langle u_\mp\rangle^{2\omega}d\mu_\mp)}=O(\varepsilon^2).\]

\smallskip

Theorem \ref{thm:deviation} is now a direct consequence of what we have proved. 

\bigskip

\section{The proof of the inverse scattering theorem}\label{sec:inverse}
In this section, we come to address the ultimate conjecture \eqref{conjecture} of this paper by proving Theorem \ref{thm:inversetheorem}.

\medskip
\subsection{Differential of nonlinear scattering operators} 
We note that all the derivatives of scattering fields at infinities have been recovered in Section \ref{sec:proof}. Moreover, for $\#=a,b,c,d$, the continuity of $\mathcal{N}^{(\#)}$ at $\mathbf{0} \in H^{N_*+1}(\Sigma_0,\langle u_-\rangle^{2\omega}d\mu_-)\times H^{N_*+1}(\Sigma_0,\langle u_+\rangle^{2\omega}d\mu_+)$ follows immediately from Theorem \ref{thm:weightSobolev} and Theorem \ref{thm:deviation}. Therefore the key element of our inverse scattering theorem now is to verify the following relationship between the differentials of nonlinear scattering operators and linear propagating operators:
\begin{equation}\label{eq:differential}
d \,\mathcal{N}^{(\#)}	\big|_{\mathbf{0}}=\mathcal{L}^{(\#)},\ \ \text{where }  \#=a,b,c,d.
\end{equation}
For clarity, let us study these four cases and compute the differentials of their corresponding nonlinear scattering operators one by one. 
The following Figure \ref{fig:abcd} will help us to structure all cases in the proof. 
\begin{figure}[ht]
	\centering
	\includegraphics[width=1\textwidth]{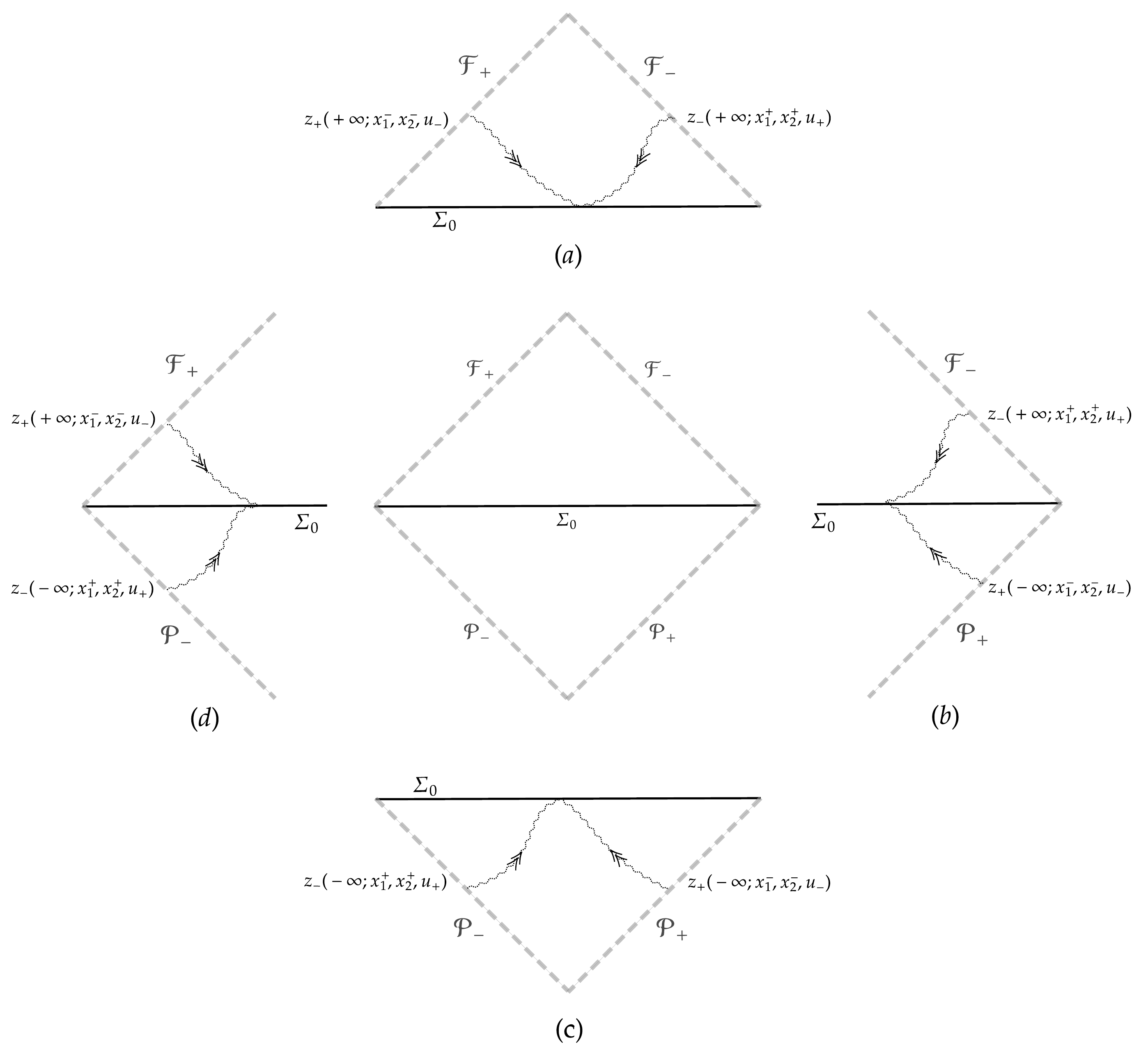}
	\caption{Structure of the inverse scattering theorem}
	\label{fig:abcd}
\end{figure}

\medskip

\subsubsection*{\bf{Proof for Case (a)}}
Notice that regarded as operators between 
\[H^{N_*+1}(\Sigma_0,\langle u_-\rangle^{2\omega}d\mu_-)\times H^{N_*+1}(\Sigma_0,\langle u_+\rangle^{2\omega}d\mu_+)\rightarrow H^{N_*+1}(\mathcal{F}_+,\langle u_-\rangle^{2\omega}d\mu_-) \times  H^{N_*+1}(\mathcal{F}_-,\langle u_+\rangle^{2\omega}d\mu_+),\]
the linear propagating operator $\mathcal{L}^{(a)}$ and the nonlinear scattering operator $\mathcal{N}^{(a)}$ have been established by  
\begin{align*}
\ \ \mathcal{L}^{(a)}:\  \big(z_+(0,x_1^-,x_2^-,u_-),z_-(0,x_1^+,x_2^+,u_+)\big)&\mapsto \big(z_+(0,x_1^-,x_2^-,u_-),z_-(0,x_1^+,x_2^+,u_+)\big),\\
\ \ \mathcal{N}^{(a)}:\  \big(z_+(0,x_1^-,x_2^-,u_-),z_-(0,x_1^+,x_2^+,u_+)\big)&\mapsto \big(z_+(+\infty;x_1^-,x_2^-,u_-),z_-(+\infty;x_1^+,x_2^+,u_+)\big).
\end{align*}
Thanks to \eqref{eq:initial energy}, we have 
\begin{equation}\label{eq:initialsobolev}
\big\|\big(z_+(0,x_1^-,x_2^-,u_-),z_-(0,x_1^+,x_2^+,u_+)\big)\big\|_{H^{N_*+1}(\Sigma_0,\langle u_-\rangle^{2\omega}d\mu_-)\times H^{N_*+1}(\Sigma_0,\langle u_+\rangle^{2\omega}d\mu_+)}=O(\varepsilon).
\end{equation}
This allows us to use Theorem \ref{thm:deviation} to infer that
\begin{align*}
\big\|\big(z_+(+\infty;x_1^-,x_2^-,u_-),z_-(+\infty;x_1^+,x_2^+,u_+)\big)&\\
-\big(z_+(0,x_1^-,x_2^-,u_-),z_-(0,x_1^+,x_2^+,u_+)\big)&\big\|_{H^{N_*+1}(\mathcal{F}_+,\langle u_-\rangle^{2\omega}d\mu_-) \times  H^{N_*+1}(\mathcal{F}_-,\langle u_+\rangle^{2\omega}d\mu_+)}=O(\varepsilon^2).
\end{align*}
Since $\varepsilon$ is arbitrary, by letting $\varepsilon\to 0$, we conclude that 
\begin{align*}
&\ \ \ \ \mathcal{N}^{(a)}\big(z_+(0,x_1^-,x_2^-,u_-),z_-(0,x_1^+,x_2^+,u_+)\big)\\
&=\mathcal{L}^{(a)}\big(z_+(0,x_1^-,x_2^-,u_-),z_-(0,x_1^+,x_2^+,u_+)\big)+o\big(z_+(0,x_1^-,x_2^-,u_-),z_-(0,x_1^+,x_2^+,u_+)\big).
\end{align*}
Then we can compute the differential of $\mathcal{N}^{(a)}$ at $\mathbf{0} \in H^{N_*+1}(\Sigma_0,\langle u_-\rangle^{2\omega}d\mu_-)\times H^{N_*+1}(\Sigma_0,\langle u_+\rangle^{2\omega}d\mu_+)$ as
\[d \,\mathcal{N}^{(a)}\big|_{\mathbf{0}}=\mathcal{L}^{(a)}.\]

\medskip
\subsubsection*{\bf{Proof for Case (b)}}
Note that regarded as operators between 
\[H^{N_*+1}(\Sigma_0,\langle u_-\rangle^{2\omega}d\mu_-)\times H^{N_*+1}(\Sigma_0,\langle u_+\rangle^{2\omega}d\mu_+)\rightarrow H^{N_*+1}(\mathcal{P}_+,\langle u_-\rangle^{2\omega}d\mu_-) \times  H^{N_*+1}(\mathcal{F}_-,\langle u_+\rangle^{2\omega}d\mu_+),\]
the linear propagating operator $\mathcal{L}^{(b)}$ and the nonlinear scattering operator $\mathcal{N}^{(b)}$ have been constructed by 
\begin{align*}
	\ \ \mathcal{L}^{(b)}:\  \big(z_+(0,x_1^-,x_2^-,u_-),z_-(0,x_1^+,x_2^+,u_+)\big)&\mapsto \big(z_+(0,x_1^-,x_2^-,u_-),z_-(0,x_1^+,x_2^+,u_+)\big),\\
	\ \ \mathcal{N}^{(b)}:\  \big(z_+(0,x_1^-,x_2^-,u_-),z_-(0,x_1^+,x_2^+,u_+)\big)&\mapsto \big(z_+(-\infty;x_1^-,x_2^-,u_-),z_-(+\infty;x_1^+,x_2^+,u_+)\big).
\end{align*}
Combined with \eqref{eq:initialsobolev}, Theorem \ref{thm:deviation} then leads us to
\begin{align*}
\big\|\big(z_+(-\infty;x_1^-,x_2^-,u_-),z_-(+\infty;x_1^+,x_2^+,u_+)\big)&\\
-\big(z_+(0,x_1^-,x_2^-,u_-),z_-(0,x_1^+,x_2^+,u_+)\big)&\big\|_{H^{N_*+1}(\mathcal{P}_+,\langle u_-\rangle^{2\omega}d\mu_-) \times  H^{N_*+1}(\mathcal{F}_-,\langle u_+\rangle^{2\omega}d\mu_+)}=O(\varepsilon^2).
\end{align*}
Letting $\varepsilon\to 0$ as before also gives
\begin{align*}
&\ \ \ \ \mathcal{N}^{(b)}\big(z_+(0,x_1^-,x_2^-,u_-),z_-(0,x_1^+,x_2^+,u_+)\big)\\
&=\mathcal{L}^{(b)}\big(z_+(0,x_1^-,x_2^-,u_-),z_-(0,x_1^+,x_2^+,u_+)\big)+o\big(z_+(0,x_1^-,x_2^-,u_-),z_-(0,x_1^+,x_2^+,u_+)\big).
\end{align*}
Hence we can compute the differential of $\mathcal{N}^{(b)}$ at $\mathbf{0} \in H^{N_*+1}(\Sigma_0,\langle u_-\rangle^{2\omega}d\mu_-)\times H^{N_*+1}(\Sigma_0,\langle u_+\rangle^{2\omega}d\mu_+)$ as
\[d \,\mathcal{N}^{(b)}\big|_{\mathbf{0}}=\mathcal{L}^{(b)}.\]

\medskip
\subsubsection*{\bf{Proof for Case (c)}}
Recall that regarded as operators between 
\[H^{N_*+1}(\Sigma_0,\langle u_-\rangle^{2\omega}d\mu_-)\times H^{N_*+1}(\Sigma_0,\langle u_+\rangle^{2\omega}d\mu_+)\rightarrow H^{N_*+1}(\mathcal{P}_+,\langle u_-\rangle^{2\omega}d\mu_-) \times  H^{N_*+1}(\mathcal{P}_-,\langle u_+\rangle^{2\omega}d\mu_+),\]
the linear propagating operator $\mathcal{L}^{(c)}$ and the nonlinear scattering operator $\mathcal{N}^{(c)}$ have been defined by 
\begin{align*}
	\ \ \mathcal{L}^{(c)}:\  \big(z_+(0,x_1^-,x_2^-,u_-),z_-(0,x_1^+,x_2^+,u_+)\big)&\mapsto \big(z_+(0,x_1^-,x_2^-,u_-),z_-(0,x_1^+,x_2^+,u_+)\big),\\
	\ \ \mathcal{N}^{(c)}:\  \big(z_+(0,x_1^-,x_2^-,u_-),z_-(0,x_1^+,x_2^+,u_+)\big)&\mapsto \big(z_+(-\infty;x_1^-,x_2^-,u_-),z_-(-\infty;x_1^+,x_2^+,u_+)\big).
\end{align*}
It is obvious that \eqref{eq:initialsobolev} and Theorem \ref{thm:deviation} are also applicable to yield
\begin{align*}
\big\|\big(z_+(-\infty;x_1^-,x_2^-,u_-),z_-(-\infty;x_1^+,x_2^+,u_+)\big)&\\
-\big(z_+(0,x_1^-,x_2^-,u_-),z_-(0,x_1^+,x_2^+,u_+)\big)&\big\|_{H^{N_*+1}(\mathcal{P}_+,\langle u_-\rangle^{2\omega}d\mu_-) \times  H^{N_*+1}(\mathcal{P}_-,\langle u_+\rangle^{2\omega}d\mu_+)}=O(\varepsilon^2),
\end{align*}
and it follows that 
\begin{align*}
&\ \ \ \ \mathcal{N}^{(c)}\big(z_+(0,x_1^-,x_2^-,u_-),z_-(0,x_1^+,x_2^+,u_+)\big)\\
&=\mathcal{L}^{(c)}\big(z_+(0,x_1^-,x_2^-,u_-),z_-(0,x_1^+,x_2^+,u_+)\big)+o\big(z_+(0,x_1^-,x_2^-,u_-),z_-(0,x_1^+,x_2^+,u_+)\big).
\end{align*}
Therefore we can derive the differential of $\mathcal{N}^{(c)}$ at $\mathbf{0} \in H^{N_*+1}(\Sigma_0,\langle u_-\rangle^{2\omega}d\mu_-)\times H^{N_*+1}(\Sigma_0,\langle u_+\rangle^{2\omega}d\mu_+)$:
\[d \,\mathcal{N}^{(c)}\big|_{\mathbf{0}}=\mathcal{L}^{(c)}.\]

\medskip
\subsubsection*{\bf{Proof for Case (d)}}
We also recall that regarded as operators between 
\[H^{N_*+1}(\Sigma_0,\langle u_-\rangle^{2\omega}d\mu_-)\times H^{N_*+1}(\Sigma_0,\langle u_+\rangle^{2\omega}d\mu_+)\rightarrow H^{N_*+1}(\mathcal{F}_+,\langle u_-\rangle^{2\omega}d\mu_-) \times  H^{N_*+1}(\mathcal{P}_-,\langle u_+\rangle^{2\omega}d\mu_+),\]
the linear propagating operator $\mathcal{L}^{(d)}$ and the nonlinear scattering operator $\mathcal{N}^{(d)}$ have been defined by 
\begin{align*}
	\ \ \mathcal{L}^{(d)}:\  \big(z_+(0,x_1^-,x_2^-,u_-),z_-(0,x_1^+,x_2^+,u_+)\big)&\mapsto \big(z_+(0,x_1^-,x_2^-,u_-),z_-(0,x_1^+,x_2^+,u_+)\big),\\
	\ \ \mathcal{N}^{(d)}:\  \big(z_+(0,x_1^-,x_2^-,u_-),z_-(0,x_1^+,x_2^+,u_+)\big)&\mapsto \big(z_+(+\infty;x_1^-,x_2^-,u_-),z_-(-\infty;x_1^+,x_2^+,u_+)\big).
\end{align*}
An entirely similar argument as the above three cases shows that 
\begin{align*}
\big\|\big(z_+(+\infty;x_1^-,x_2^-,u_-),z_-(-\infty;x_1^+,x_2^+,u_+)\big)&\\
-\big(z_+(0,x_1^-,x_2^-,u_-),z_-(0,x_1^+,x_2^+,u_+)\big)&\big\|_{H^{N_*+1}(\mathcal{F}_+,\langle u_-\rangle^{2\omega}d\mu_-) \times  H^{N_*+1}(\mathcal{P}_-,\langle u_+\rangle^{2\omega}d\mu_+)}=O(\varepsilon^2),
\end{align*} 
and hence 
\begin{align*}
&\ \ \ \ \mathcal{N}^{(a)}\big(z_+(0,x_1^-,x_2^-,u_-),z_-(0,x_1^+,x_2^+,u_+)\big)\\
&=\mathcal{L}^{(a)}\big(z_+(0,x_1^-,x_2^-,u_-),z_-(0,x_1^+,x_2^+,u_+)\big)+o\big(z_+(0,x_1^-,x_2^-,u_-),z_-(0,x_1^+,x_2^+,u_+)\big).
\end{align*}
Finally we are able to get the differential of $\mathcal{N}^{(d)}$ at $\mathbf{0} \in H^{N_*+1}(\Sigma_0,\langle u_-\rangle^{2\omega}d\mu_-)\times H^{N_*+1}(\Sigma_0,\langle u_+\rangle^{2\omega}d\mu_+)$:
\[d \,\mathcal{N}^{(d)}\big|_{\mathbf{0}}=\mathcal{L}^{(d)}.\]

\bigskip

Up to now, we can summarize what have proved in these four cases to be \eqref{eq:differential} as asserted. Consequently, for $\#=a,b,c,d$, it always holds that $d \,\mathcal{N}^{(\#)}\big|_{\mathbf{0}}$ is  invertible. Then according to the Inverse Function Theorem, $\mathcal{N}^{(\#)}$ is indeed a local diffeomorphism at $\mathbf{0} \in H^{N_*+1}(\Sigma_0,\langle u_-\rangle^{2\omega}d\mu_-)\times H^{N_*+1}(\Sigma_0,\langle u_+\rangle^{2\omega}d\mu_+)$. We have thus proved Theorem \ref{thm:inversetheorem} (Inverse scattering theorem for Alfv\'en waves).

\bigskip
\subsection{Concluding remarks}
We conclude this work by a brief discussion on our main contributions. 

\smallskip

Regarding innovation in the conclusion, this paper is devoted to addressing the ultimate conjecture \eqref{conjecture} on the inverse scattering problem of Alfv\'en waves in three dimensional ideal magnetohydrodynamics. The main statement of our inverse scattering theorem is consistent with the physical phenomenon that the Alfv\'en waves behave exactly in the same manner as their scattering fields detected by the faraway observers. We has provided rigorous mathematical explanations for this physical phenomenon, and in particular the scattering isomorphisms in our constructions can uniquely determine the Alfv\'en waves emanating from the plasma. In this way we are able to establish the relationship between initial Alfv\'en waves and their scattering fields, and hence to recover the Alfv\'en waves from knowledge of their scattering fields on infinities. 

\smallskip

Regarding innovation in the strategy, our main tools are the pressure estimates and the energy estimates. They arise naturally from the physical background of Alfv\'en waves and will be useful for further studies on the MHD system. The main inverse scattering theorem is obtained by reasoning along the following lines: 
we first introduce the scattering infinities and the scattering fields of Alfv\'en waves through geometric methods, then take advantage of the null structure inherent in the MHD system under strong magnetic backgrounds to derive the pressure estimates and the weighted energy estimates, and finally develop the energy approach proposed in \cite{He-Xu-Yu,Li-2021,Li-Yu} to study the scattering behavior as well as to construct the scattering isomorphisms of Alfv\'en waves. In this proof, the nonlinear nature of Alfv\'en waves has been fully considered.  
\smallskip

This work also leads to several directions for future research. For example, the inverse scattering theory in three-dimensional thin domain (as well as in the space dimension two scenario), which is of great interest and difficulties, will be treated separately in our companion papers.  Another very pressing direction is that since we have only discussed the ideal case of MHD system in this paper, it is natural to ask whether similar inverse scattering results hold for the viscous cases with viscosity coefficients and/or with resistivity coefficients. Moreover, we hope that our formulation of inverse scattering theory for Alfv\'en waves will serve as a guide for other wave scattering problems of a similar nature.

\bigskip

\end{document}